\newtheorem{theorem}{Theorem}[section]
\newtheorem{definition}[theorem]{Definition}
\newtheorem{lemma}[theorem]{Lemma}
\newtheorem{corollary}[theorem]{Corollary}
\newtheorem{remark}[theorem]{Remark}
\newtheorem{proposition}[theorem]{Proposition}
\newtheorem{conjecture}[theorem]{Conjecture}
\newcommand{\R}{\mathbb{R}} 
\newcommand{\e}{\varepsilon} 
\newcommand{\N}{\mathbb{N}}
\begin{document}
	\title{On $L_p$-Brunn-Minkowski type and $L_p$-isoperimetric type inequalities for  measures}
	\author[Michael Roysdon and Sudan Xing]
	{Michael Roysdon and Sudan Xing}

	\address{Department of Mathematical Sciences, Kent State University,
	 USA} \email{mroysdon@kent.edu }

	\address{Department of Mathematical and Statistical Sciences, University of Alberta, Canada} \email{sxing@ualberta.ca}

	\subjclass[2010]{Primary: 52A39, 52A40, 46N10;  Secondary: 28A75, 26D15} \keywords{$L_p$-Brunn-Minkowski inequality, $L_p$-Borell-Brascamp-Lieb inequality,  $L_p$-Pr\'ekopa-Leindler inequality,
	$L_p$-Isoperimetric inequality}

\date{}

\maketitle

\begin{abstract}
In 2011 Lutwak, Yang and Zhang extended the definition of the $L_p$-Minkowski convex combination ($p \geq 1$) introduced by Firey in the 1960s from convex bodies containing the origin in their interiors to all measurable subsets in $\R^n$, and as a consequence, extended the $L_p$-Brunn-Minkowski inequality ($L_p$-BMI) to the setting of all measurable sets.  In this paper, we present a functional extension of their $L_p$-Minkowski convex combination---the $L_{p,s}$--supremal convolution and prove the $L_p$-Borell-Brascamp-Lieb type ($L_p$-BBL) inequalities. Based on the $L_p$-BBL type inequalities for functions, we extend the $L_p$-BMI for measurable sets to the class of Borel measures on $\R^n$ having $\left(\frac{1}{s}\right)$-concave densities, with $s \geq 0$; that is, we show that, for any pair of Borel sets $A,B \subset \R^n$, any $t \in [0,1]$ and  $p\geq 1$, one has 
\[
\mu((1-t) \cdot_p A +_p t \cdot_p B)^{\frac{p}{n+s}} \geq (1-t) \mu(A)^{\frac{p}{n+s}} + t \mu(B)^{\frac{p}{n+s}},
\]
where $\mu$ is a measure on $\R^n$ having a $\left(\frac{1}{s}\right)$-concave density for  $0 \leq s < \infty$. 

Additionally, with the new defined $L_{p,s}$--supremal convolution for functions, we prove $L_p$-BMI for product measures with quasi-concave densities and  for log-concave densities, $L_p$-Pr\'ekopa-Leindler type inequality ($L_p$-PLI)  for product measures with quasi-concave densities, $L_p$-Minkowski's first inequality ($L_p$-MFI) and $L_p$ isoperimetric inequalities ($L_p$-ISMI) for general measures, etc.
Finally a functional counterpart of the Gardner-Zvavitch conjecture is presented for the $p$-generalization. 
\end{abstract}

\section{Introduction}
Let $\R^n$ denote the $n$-dimensional Euclidean space equipped with the usual norm $\|\cdot\|$ and its usual inner product structure $\langle \cdot, \cdot \rangle$. For a function $f\colon \R^n \to \R_+$ we denote its support by $\text{supp}(f)$. For measurable subsets $A, B \subset \R^n$ and $\alpha, \beta >0$ the Minkowski combination of the sets $A$ and $B$ with respect to the constants $\alpha$ and $\beta$ is defined by 
\[
\alpha A + \beta B = \{\alpha x+ \beta y \colon x \in A, y \in B\}. 
\]
A result involving the Minkowski combination is the famed Brunn-Minkowski inequality, which asserts that, for any $t \in[0,1]$ and for any measurable sets $A,B\subset \R^n$ such that the sets $(1-t)A + tB$ is also measurable, one has 
\begin{equation}\label{e:BM}
|(1-t) A+tB|_{n}^{1/n} \geq (1-t) |A|_n^{1/n} + t |B|_n^{1/n}.
\end{equation}
If, in addition, $A$ and $B$ are taken to be convex sets, then equality in \eqref{e:BM} holds if and only if $A$ and $B$ are homothetic. Here $|\cdot|_n$ denotes the $n$-dimensional Lebesgue measure (or volume) on $\R^n$.  The Brunn-Minkowski inequality has lead to a rich theory which links probability, analysis, and convex geometry (cf. \cite{AGM, Kold, Schneider:CB2} and the survey \cite{Gar}). Many generalizations of the Brunn-Minkowski inequality have been studied in the cases of measures (cf. \cite{BORELL, CLM, GZ, KL, LMNZ, arno1}) and in the functional setting (cf. \cite{BrascampLieb, L, P}). 

Let $\alpha \in [-\infty,\infty]$. We say that a Borel measure $\mu$ on $\R^n$ is $\alpha$-concave if, for any Borel sets $A,B \subset \R^n$ and any $t \in [0,1]$, one has 
\[
\mu((1-t)A+tB) \geq M_{\alpha}^t(\mu(A),\mu(B)),
\]
where, for $a,b \geq 0$,
\[
M_{\alpha}^t(a,b) =\begin{cases}
((1-t)a^{\alpha}+tb^{\alpha})^{1/\alpha} &\text{if } \alpha\neq 0, \pm \infty\\
a^{1-t}b^t &\text{if } \alpha =0\\
\max\{a,b\} &\text{if } \alpha = \infty\\
\min\{a,b\} &\text{if } \alpha =-\infty,
\end{cases}
\]
if $ab >0$, and $M_{\alpha}^t(a,b) = 0$ if $ab=0$. In the case when $\alpha =0$ the measure $\mu$ is often referred to as a $\log$-concave measure. Similarly, we say that a function $f \colon \R^n \to \R_+$ is $\alpha$-concave if, for any $x,y \in \R^n$ and every $t \in [0,1]$, one has 
\[
f((1-t)x + t y) \geq M_{\alpha}^t(f(x),f(y)).
\]
Moreover, when $\alpha = - \infty$, such functions are referred to as quasi-concave functions.  In fact, a bounded function $f \colon \R^n \to \R_+$ is quasi-concave if and only if its super-level sets 
\[
C_{r}(f) =\{x\in\R^n \colon f(x) \geq r\|f\|_{\infty}\}
\]
are convex for every $0 \leq r \leq 1$.

A seminal extension of the Brunn-Minkowski inequality for convex bodies is the Borell-Brascamp-Lieb inequality for functions, studied by Borell in \cite{BORELL} and by Brascamp and Lieb in \cite{BrascampLieb}. 

\begin{theorem}[Borell-Brascamp-Lieb inequality (BBL)]\label{t:BBL} Let $\alpha$ be such that $\alpha \geq -\frac{1}{n}$ and $t \in [0,1]$. Suppose that $f,g,h\colon \R^n \to \R_+$ is a triple of measurable functions that satisfy the condition 
\begin{equation}\label{e:BBLassumption}
h((1-t)x+ty) \geq M_{\alpha}^t(f(x),g(y))
\end{equation}
for every $x,y \in \R^n$. Then the following inequality holds true:
\begin{equation}\label{e:BBLconclusion}
\int_{\R^n}h(x) dx \geq M_{\gamma}^t\left(\int_{\R^n}f(x)dx, \int_{\R^n}g(x)dx\right), \quad \gamma=\frac{\alpha}{1+n\alpha}.
\end{equation}
\end{theorem}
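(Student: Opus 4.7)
The plan is to prove Theorem \ref{t:BBL} by induction on the dimension $n$, following the classical Borell--Brascamp--Lieb strategy. The base case $n=1$ is the heart of the argument; the multidimensional step will then reduce to it via Fubini's theorem and slicing in the last coordinate.

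For the base case, I can assume $F := \int_{\R}f$ and $G := \int_{\R}g$ are both positive, since the inequality is otherwise trivial. Parametrize the masses of $f$ and $g$ simultaneously by defining $x(\tau), y(\tau)$ for $\tau\in[0,1]$ via $\int_{-\infty}^{x(\tau)}f = \tau F$ and $\int_{-\infty}^{y(\tau)}g = \tau G$, so that at points where $f,g$ do not vanish one has $x'(\tau)=F/f(x(\tau))$ and $y'(\tau)=G/g(y(\tau))$. Setting $z(\tau)=(1-t)x(\tau)+ty(\tau)$, the change of variables $s=z(\tau)$ combined with hypothesis \eqref{e:BBLassumption} yields
\[
\int_{\R} h \;\geq\; \int_0^1 h(z(\tau))\,z'(\tau)\,d\tau \;\geq\; \int_0^1 M_\alpha^t\!\bigl(f(x(\tau)),g(y(\tau))\bigr)\left(\frac{(1-t)F}{f(x(\tau))}+\frac{tG}{g(y(\tau))}\right)d\tau.
\]
Writing $a=f(x(\tau))$ and $b=g(y(\tau))$, the integrand equals $M_\alpha^t(a,b)\,M_1^t(F/a,G/b)$, which by the H\"older-type inequality for weighted means---namely $M_\alpha^t(a,b)\,M_1^t(c,d)\geq M_{\alpha/(1+\alpha)}^t(ac,bd)$ whenever $\alpha\geq -1$---is pointwise at least the constant $M_{\alpha/(1+\alpha)}^t(F,G)=M_\gamma^t(F,G)$, which closes the case $n=1$.

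For the inductive step, write $x=(x',x_n)\in\R^{n-1}\times\R$ and introduce the marginals $F(x_n)=\int_{\R^{n-1}} f(x',x_n)\,dx'$, with $G$ and $H$ defined analogously. Fix $x_n,y_n\in\R$ and set $z_n=(1-t)x_n+ty_n$. Applying \eqref{e:BBLassumption} at $\bigl((x',x_n),(y',y_n)\bigr)$ with $x',y'$ free puts the triple $\bigl(f(\cdot,x_n),g(\cdot,y_n),h(\cdot,z_n)\bigr)$ into the BBL hypothesis in dimension $n-1$ with the same exponent $\alpha$. Since $\alpha\geq -1/n$ forces $\alpha\geq -1/(n-1)$, the inductive hypothesis gives
\[
H(z_n)\;\geq\; M_{\alpha_{n-1}}^t\!\bigl(F(x_n),G(y_n)\bigr),\qquad \alpha_{n-1}:=\frac{\alpha}{1+(n-1)\alpha}.
\]
Applying the one-dimensional BBL already established to $(F,G,H)$ with parameter $\alpha_{n-1}$ is legitimate because $\alpha_{n-1}\geq -1$ is equivalent to $\alpha\geq -1/n$, and a direct calculation shows $\alpha_{n-1}/(1+\alpha_{n-1})=\alpha/(1+n\alpha)=\gamma$. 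Combined with Fubini's theorem this yields \eqref{e:BBLconclusion}.

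The main obstacle is the means inequality $M_\alpha^t(a,b)\,M_1^t(c,d)\geq M_{\alpha/(1+\alpha)}^t(ac,bd)$ for $\alpha\geq -1$: the cases $\alpha>0$, $\alpha=0$ (which reduces to the AM--GM inequality), $-1<\alpha<0$, and the extremes $\alpha=-1,\infty$ each require separate verification, together with the degenerate possibilities where some of $a,b,c,d$ vanish. The remaining technicalities---absolute continuity of the distribution functions, measurability of $h$ along the parametrization, and a standard approximation when $f,g$ are not continuous so that $x(\tau), y(\tau)$ need not be smooth---are routine once this key mean inequality is in hand.
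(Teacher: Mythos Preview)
Your proof is the classical transport-map proof of the Borell--Brascamp--Lieb inequality and is correct as outlined; the mean inequality $M_\alpha^t(a,b)\,M_1^t(c,d)\geq M_{\alpha/(1+\alpha)}^t(ac,bd)$ for $\alpha\geq -1$ is a standard consequence of the H\"older/power-mean inequalities, and the inductive reduction via Fubini with the exponent bookkeeping $\alpha_{n-1}/(1+\alpha_{n-1})=\alpha/(1+n\alpha)$ is exactly right.

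However, there is nothing to compare against: the paper does not prove Theorem~\ref{t:BBL}. It is quoted in the introduction as a classical result due to Borell \cite{BORELL} and Brascamp--Lieb \cite{BrascampLieb} and used as background. The paper's own contribution is the $L_p$ analogue (Theorem~\ref{t:LPBBL0}/\ref{t:LPBBL}), proved in Section~\ref{t:LPBBL333} by an entirely different route---lifting the functions to bodies of revolution in higher dimension and applying the $L_p$-Brunn--Minkowski inequality for sets---rather than by the one-dimensional transport argument you use. If you intended to address the $L_p$-BBL, note that your mass-parametrization approach does not obviously adapt: the $L_p$ combination involves an extra supremum over $\lambda$, and the coefficients $(1-t)^{1/p}(1-\lambda)^{(p-1)/p}$, $t^{1/p}\lambda^{(p-1)/p}$ do not sum to $1$, so the change of variables $z(\tau)=(1-t)x(\tau)+ty(\tau)$ has no direct analogue.
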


The case $\alpha =0$ is referred to as the Pr\'ekopa-Leindler inequality and was explored by Pr\'ekopa in \cite{P} and Leindler in \cite{L}.

Theorem~\ref{t:BBL} implies, in particular, that measures having $\alpha=\left(\frac{1}{s}\right)$-concave density must be $\left( \frac{1}{n+s}\right)$-concave for $s\leq -n$ and $s\geq0$. The minimal function satisfying the condition \eqref{e:BBLassumption} is the so-called $(t,s)$-supremal convolution, that is, the measurable function $h_{t,s} \colon \R^n \to \R_+$ defined by 
\begin{equation}\label{e:supconvolution}
\begin{split}
h_{t,s}(z) &= \sup_{z = (1-t)x + t y} M_{\frac{1}{s}}^t(f(x),g(y))\\
&=\sup_{z = (1-t)x + t y} \left[(1-t)f(x)^{\frac{1}{s}} + t g(y)^{\frac{1}{s}}\right]^s,
\end{split}
\end{equation}
where $f,g \colon \R^n \to \R_+$ are arbitrary measurable functions. Alternatively,  the $(t,s)$-supremal convolution can be expressed via an operation of addition and scalar multiplication. Given two measurable functions $,f,g \colon \R^n \to \R_+$, the sum $f \oplus_s g$  is defined by \cite{BCF, Klartag} as
\[
[f \oplus_s g](x) = \sup_{z = x+y}[f(x)^{1/s}+g(y)^{1/s}]^s,
\]
where the supremum is taken over all way to write $z=x+y$, with $x \in \text{supp}(f)$ and $y \in \text{supp}(g)$. Moreover, given a positive constant $\alpha$, the scalar of this sum $\times_s$ satisfies
\[
[\alpha \times_s f](x) = \alpha^sf(x/\alpha).
\]
Therefore, we really have $h_{t,s} = (1-t) \times_s f \oplus_s t \times_s g$. 
Moreover, the function $h_{t,s}$ is $\left(\frac{1}{s}\right)$-concave whenever $f,g$ are as well. In particular, whenever $f=1_A$ and $g =1_B$ are characteristic functions of measurable sets $A,B \subset \R^n$ with $(1-t)A+tB$ also measurable, then 
\[
h_{t,s}(z) = 1_{(1-t)A+tB}(z).
\]
In this sense, the $(t,s)$-supremal convolution extends naturally the Minkowski convex combination to the functional setting.  

In \cite{Firey} Firey extended the Minkowski combination in the setting of convex bodies $\mathcal{K}_{(o)}^n$ (convex, compact subsets of $\R^n$ with the origin in their interiors), which has been named as the $L_p$-Minkowski convex combination (cf. \cite{Schneider:CB2}). Let $p \in [1,\infty)$ and $t \in [0,1]$. Then, for any convex bodies $K,L \subset \mathcal{K}_{(o)}^n$, and $\alpha, \beta >0$, the $L_p$-Minkowski combination of $K$ and $L$ with respect to $\alpha$ and $\beta$ is defined to be  the convex body $\alpha \cdot_p K +_p \beta\cdot_p L$ whose support function is given by 
\begin{equation}\label{e:Lpminkowskicomb}
h_{\alpha \cdot_p K +_p \beta\cdot_p L}(x) = (\alpha h_K(x)^p + \beta h_L(x)^p)^{1/p},
\end{equation}
where, for a convex body $K$, $h_{K}(x) = \max_{y \in K} \langle x,y \rangle$ is the support function of $K$. Note that when $p=1$, equation \eqref{e:Lpminkowskicomb} becomes the usual Minkowski convex combination. In \cite{Firey} an extension of the Brunn-Minkowski inequality, often referred to as the $L_p$-Brunn-Minkowski inequality, was established for convex bodies containing the origin in their interiors. In \cite{LYZ} Lutwak, Yang, and Zhang extended the definition \eqref{e:Lpminkowskicomb} to the collection of all measurable subsets of $\R^n$ that coincides with original definition due to Firey when the sets involved are convex bodies containing the origin in their interiors. Let $p \in [1,\infty)$ and $t \in [0,1]$. Given non-empty measurable subsets $A$ and $B$ of $\R^n$, and $\alpha, \beta >0$, the $L_p$-Minkowski combination of $A$ and $B$  with respect to $\alpha$ and $\beta$ is defined by 
\begin{equation}\label{e:LYZMinkowskicombo}
\alpha \cdot_p A +_p \beta \cdot_p B = \left\{\alpha^{\frac{1}{p}}(1-\lambda)^{\frac{p-1}{p}}x+\beta^{\frac{1}{p}}\lambda^{\frac{p-1}{p}}y \colon x \in A, y \in B, 0\leq \lambda \leq 1\right\}.
\end{equation}
As a consequence of this extension of the $L_p$-Minkowski convex combination, Lutwak, Yang, and Zhang extended the  $L_p$-Brunn-Minkowski inequality to the setting of all non-empty measurable sets.

\begin{theorem}[$L_p$-Brunn-Minkowski inequality ($L_p$-BMI)] Let $p \in[1,\infty)$ and $t \in [0,1]$. For any non-empty measurable subsets $A,B$ of $\R^n$ such that $(1-t) \cdot_p A +_p t \cdot_p B$ is also measurable, one has 
\begin{equation}\label{e:LYZBM}
|(1-t) \cdot_p A +_p t \cdot_p B|_n \geq M_{\frac{p}{n}}^t(|A|_n,|B|_n).
\end{equation}
In addition, for $p >1$, if $A,B \in\mathcal{K}_{(o)}^n$, then equality occurs in \eqref{e:LYZBM} only when $A$ and $B$ are dilates of one another.
\end{theorem}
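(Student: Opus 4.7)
The plan is to exploit the fibered structure of the $L_p$-Minkowski combination \eqref{e:LYZMinkowskicombo}: as the internal parameter $\lambda$ ranges over $[0,1]$, the set $(1-t)\cdot_p A +_p t \cdot_p B$ decomposes as a union of rescaled classical Minkowski sums, to which the ordinary Brunn-Minkowski inequality \eqref{e:BM} can be applied slice by slice. A subsequent Hölder optimization in $\lambda$ will produce precisely the $M_{p/n}^t$ mean on the right-hand side of \eqref{e:LYZBM}.

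Concretely, setting $a_\lambda := (1-t)^{1/p}(1-\lambda)^{(p-1)/p}$ and $b_\lambda := t^{1/p}\lambda^{(p-1)/p}$, the definition \eqref{e:LYZMinkowskicombo} reads
\[
(1-t)\cdot_p A +_p t \cdot_p B \;=\; \bigcup_{\lambda \in [0,1]} \bigl(a_\lambda A + b_\lambda B\bigr),
\]
so by monotonicity of Lebesgue measure, $|(1-t)\cdot_p A +_p t \cdot_p B|_n \geq \sup_{\lambda \in [0,1]} |a_\lambda A + b_\lambda B|_n$. For each fixed $\lambda$, \eqref{e:BM} applied after pulling out the positive scalars yields $|a_\lambda A + b_\lambda B|_n^{1/n} \geq a_\lambda |A|_n^{1/n} + b_\lambda |B|_n^{1/n}$.

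It remains to maximize $\Phi(\lambda) := a_\lambda |A|_n^{1/n} + b_\lambda |B|_n^{1/n}$ over $\lambda \in [0,1]$. Applying Hölder's inequality with conjugate exponents $p$ and $p/(p-1)$ to this two-term sum gives
\[
\Phi(\lambda) \;\leq\; \bigl[(1-t)|A|_n^{p/n} + t|B|_n^{p/n}\bigr]^{1/p}\bigl[(1-\lambda)+\lambda\bigr]^{(p-1)/p} \;=\; \bigl[(1-t)|A|_n^{p/n} + t|B|_n^{p/n}\bigr]^{1/p},
\]
with equality attained at $\lambda^* = t|B|_n^{p/n} / \bigl[(1-t)|A|_n^{p/n} + t|B|_n^{p/n}\bigr] \in [0,1]$. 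Combining with the previous step and raising to the $n$-th power yields \eqref{e:LYZBM}. For the equality assertion when $p>1$ and $A,B \in \mathcal{K}_{(o)}^n$, tightness in the classical Brunn-Minkowski step at $\lambda^*$ forces $A$ and $B$ to be homothetic; and since $\cdot_p,+_p$ (unlike the case $p=1$) is not translation invariant, this homothety must in fact reduce to a dilation about the origin.

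The main obstacle I anticipate is not the inequality itself, each step being elementary once the union decomposition is in place, but rather the careful treatment of the equality case: ruling out nontrivial translations requires comparing the failure of translation invariance of $\alpha \cdot_p A +_p \beta \cdot_p B$ for $p>1$ with the homothety forced by the classical Brunn-Minkowski equality. A minor technical point is the measurability of $\lambda \mapsto |a_\lambda A + b_\lambda B|_n$, needed to justify moving the supremum outside the measure.
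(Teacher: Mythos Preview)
The paper does not supply its own proof of this theorem; it is quoted from Lutwak--Yang--Zhang \cite{LYZ}. Your argument is precisely the one given there: write $(1-t)\cdot_p A +_p t\cdot_p B$ as the union over $\lambda$ of the classical Minkowski sums $a_\lambda A + b_\lambda B$, apply \eqref{e:BM} at each $\lambda$, and optimize via H\"older with equality at $\lambda^\ast = t|B|_n^{p/n}\big/[(1-t)|A|_n^{p/n}+t|B|_n^{p/n}]$. The inequality is correct as written. Note that you never need the measurability of $\lambda \mapsto |a_\lambda A + b_\lambda B|_n$; only the single containment $a_{\lambda^\ast}A + b_{\lambda^\ast}B \subset (1-t)\cdot_p A +_p t\cdot_p B$ is used, so the ``minor technical point'' you flag is a non-issue (measurability of the single slice $a_{\lambda^\ast}A + b_{\lambda^\ast}B$ can be handled, if needed, via inner Lebesgue measure as in the usual proof of \eqref{e:BM}).

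Your equality sketch is on the right track but not yet a proof. Equality in \eqref{e:BM} at $\lambda^\ast$ gives $B = rA + v$; however, the phrase ``$+_p$ is not translation invariant'' does not by itself force $v=0$. What you must also use is the \emph{other} equality forced in the chain, namely $|(1-t)\cdot_p A +_p t\cdot_p B|_n = |a_{\lambda^\ast}A + b_{\lambda^\ast}B|_n$. For $A,B\in\mathcal{K}_{(o)}^n$ the LYZ combination coincides with Firey's and is a convex body, so equal volume with a convex subset forces the full $L_p$-sum to coincide with the single slice $a_{\lambda^\ast}A+b_{\lambda^\ast}B$. Comparing support functions (or, equivalently, noting that the slices $a_\lambda A + b_\lambda(rA+v) = (a_\lambda + rb_\lambda)A + b_\lambda v$ sweep out a strictly larger set as $\lambda$ varies unless $v=0$) then rules out a nontrivial translation.
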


This study, ignited by Firey, has been widely pushed forward by Lutwak in \cite{lutwak, lutwak2}. Additionally, the famous $\log$-Brunn-Minkowski conjecture of B\"or\"oczky, Lutwak,  Yang, and Zhang  \cite{Boroczky-Lutwak-Yang-Zhang} has been regarded as a result of Firey's generalization.  For more advances of the $L_p$-Brunn-Minkowski theory see also \cite{BHZ, BLYZ-1,BLYZ-2,BHT, global,CLM,KolMilsupernew,LYZ,arno,Putter,liran,christos, christos1, Stan,Stan1,YgZg1,Zhu,Zou}.

Similar to the $L_p$-Minkowski combination of convex bodies, we define the $L_{p,s}$--supremal convolution for functions, that is, $h_{p,t,s} \colon \R^n \to \R_+$ is defined by 
\begin{equation}\label{e:newsupcolvolution}
h_{p,t,s}(z)=
\sup_{0\leq \lambda \leq 1}\left( \sup_{z=(1-t)^{\frac{1}{p}}(1-\lambda)^{\frac{p-1}{p}}x + t^{\frac{1}{p}}\lambda^{\frac{p-1}{p}}y} \left[(1-t)^{\frac{1}{p}}(1-\lambda)^{\frac{p-1}{p}} f(x)^{\frac{1}{s}} + t^{\frac{1}{p}}\lambda^{\frac{p-1}{p}}g(y)^{\frac{1}{s}}\right]^s\right),   
\end{equation}
where the supremum combined together represents the supremum taken over 
\[
z \in (1-t)\cdot_p \text{supp}(f) +_p t \cdot_p \text{supp}(g)
\]
for $t\in[0,1].$

In the same manner as the classical Borell-Brascamp-Lieb for functions can be used to derive the Brunn-Minkowski inequality for convex bodies, in 
Section $2$ we present the $L_p$-Brunn-Minkowski inequalities 
for measures with $(1/s)$-concave densities based on $L_p$-BBL inequality for functions, and $L_p$-BMI for product measures with quasi-concave densities based on $L_p$-PLI for product measures with quasi-concave densities, respectively. That is,

\begin{theorem}[$L_p$-BMI for measures with $(1/s)$-concave densities]\label{t:Lpsconcavemeasures} Let $p \in[1,\infty)$, $t \in [0,1]$ and $s \in [0,\infty)$. Let $\mu$ be a measure given by $d\mu(x) = \phi(x) dx$, where $\phi \colon \R^n \to \R_+$ is a $\left(\frac{1}{s}\right)$-concave function on its support. Then, for any Borel subsets $A$ and $B$ in $\R^n$, one has 
\begin{equation}\label{e:LpBMmeasures}
\mu((1-t) \cdot_p A +_p t \cdot_p B) \geq M_{\frac{p}{n+s}}^t(\mu(A),\mu(B)). 
\end{equation}

\end{theorem}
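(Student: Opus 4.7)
The plan is to deduce this $L_p$-BMI for $\left(\frac{1}{s}\right)$-concave measures directly from the $L_p$-Borell-Brascamp-Lieb inequality ($L_p$-BBL) for functions established earlier in Section~2, exactly in parallel with the way the classical Brunn-Minkowski inequality for such measures is obtained from Theorem~\ref{t:BBL}. I take $f = \phi \cdot 1_A$, $g = \phi \cdot 1_B$, and $h = \phi \cdot 1_{(1-t) \cdot_p A +_p t \cdot_p B}$; these are nonnegative measurable functions whose Lebesgue integrals are $\mu(A)$, $\mu(B)$, and $\mu((1-t) \cdot_p A +_p t \cdot_p B)$, respectively. Once the functional hypothesis of the $L_p$-BBL is verified for this triple, its conclusion $\int h \geq M_{p/(n+s)}^t(\int f, \int g)$ is exactly the inequality \eqref{e:LpBMmeasures}.

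The heart of the argument is the pointwise verification that $h$ dominates the $L_{p,s}$-supremal convolution $h_{p,t,s}$ of $f$ and $g$ from \eqref{e:newsupcolvolution}. Writing $a = (1-t)^{1/p}(1-\lambda)^{(p-1)/p}$ and $b = t^{1/p}\lambda^{(p-1)/p}$, this reduces to showing that for each $\lambda \in [0,1]$, $x \in A$, and $y \in B$, the point $z = ax + by$, which by construction lies in $(1-t) \cdot_p A +_p t \cdot_p B$ so that $h(z) = \phi(z)$, satisfies
$$\phi(z) \;\geq\; \bigl[a\,\phi(x)^{1/s} + b\,\phi(y)^{1/s}\bigr]^s.$$
Young's inequality applied to the conjugate exponents $p$ and $p/(p-1)$ yields $a + b \leq 1$, with equality exactly when $\lambda = t$. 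Setting $\sigma = a + b$ and $\bar w = z/\sigma = (a/\sigma)x + (b/\sigma)y$ expresses $\bar w$ as a genuine convex combination of points of $\text{supp}(\phi)$, so the $\left(\frac{1}{s}\right)$-concavity of $\phi$ yields $\phi(\bar w) \geq \sigma^{-s}\bigl[a\,\phi(x)^{1/s} + b\,\phi(y)^{1/s}\bigr]^s$. Coupling this with the scaling estimate $\phi(\sigma\bar w) \geq \sigma^s\phi(\bar w)$, obtained by applying $\left(\frac{1}{s}\right)$-concavity to the pair $(\bar w, 0)$ under the natural assumption that the origin lies in $\text{supp}(\phi)$, closes the pointwise inequality.

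The main obstacle is precisely this last scaling step: for $\lambda \neq t$ one has the strict deficit $a + b < 1$, so the $L_p$-combination of $x$ and $y$ is not itself a convex combination and the $\left(\frac{1}{s}\right)$-concavity of $\phi$ cannot be invoked on $z$ directly. Bridging this gap requires using the behavior of $\phi$ at the origin (beyond $\left(\frac{1}{s}\right)$-concavity on its support), and this is the only nonroutine link in the chain; the remainder is bookkeeping that expresses $\mu(A)$, $\mu(B)$, and $\mu((1-t)\cdot_p A +_p t\cdot_p B)$ as integrals of $f$, $g$, $h$ and invokes the $L_p$-BBL.
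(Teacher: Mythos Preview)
Your proposal is correct and follows essentially the same approach as the paper: choose $f=\phi\cdot 1_A$, $g=\phi\cdot 1_B$, $h=\phi\cdot 1_{(1-t)\cdot_p A+_p t\cdot_p B}$, verify the pointwise hypothesis \eqref{e:LpBBLassumption0} of the $L_p$-BBL inequality (Theorem~\ref{t:LPBBL0}), and read off the conclusion. The only cosmetic difference is how you handle the deficit $a+b\le 1$: you rescale symmetrically, writing $z=\sigma\bar w$ with $\bar w$ a genuine convex combination of $x,y$ and then using $\phi(\sigma\bar w)^{1/s}\ge \sigma\,\phi(\bar w)^{1/s}$; the paper instead splits asymmetrically as $z=ax+(1-a)\bigl[\tfrac{b}{1-a}\,y\bigr]$ and uses $\phi\bigl(\tfrac{b}{1-a}\,y\bigr)^{1/s}\ge \tfrac{b}{1-a}\,\phi(y)^{1/s}$. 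Both routes rest on the same hidden ingredient you explicitly flag---that $0\in\text{supp}(\phi)$ so that $(1/s)$-concavity toward the origin is available---which the paper uses without comment. One terminological nit: the bound $a+b\le 1$ is H\"older's inequality for the conjugate pair $(p,p/(p-1))$, not Young's.
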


We note that the parameter of the above theorem is  $0 \leq s < \infty$ while  the case $s =\infty$ is obvious and it follows immediately from the Borell-Brascamp-Lieb inequality \eqref{e:BBLconclusion} and the inclusion (cf. \cite{LYZ})
\[
(1-t) \cdot_p A +_p t \cdot_p B \supset (1-t) A + tB.
\]
Therefore,  
we develop the following improved $L_p$-BMI for the case $s = \infty$, but before stating the result, we require the following definition from \cite{JesusManuel}. We say that set $A \subset \R^n$ is weakly unconditional if, for every $x=(x_1,\dots,x_n) \in A$ and every $\e = (\e_1,\dots,\e_n) \in \{0,1\}^n$, one has 
\[
(\e_1x_1,\dots,\e_nx_n) \in A.
\]

\begin{theorem}[$L_p$-BMI for product measures with quasi-concave densities] \label{t:LpPLmeasures0}  Let $p \in (1,\infty)$, $t\in [0,1]$, and $\mu = \mu_1 \times \cdots \times \mu_n$ be a product measure on $\R^n$, where, for each $i = 1,\dots,n$, $\mu_i$ is a measure on $\R$ having a quasi-concave density with maximum at the origin. Then for any weakly unconditional measurable sets $A,B \subset \R^n$, such that $(1-t) \cdot_p A +_p t \cdot_p B$ is also measurable, one has 
\begin{equation}\label{e:LpPLmeasures}
\mu((1-\lambda)\cdot_p A +_p \lambda \cdot_p B)^{\frac{p}{n}} \geq (1-\lambda) \mu(A)^{\frac{p}{n}} + \lambda \mu(B)^{\frac{p}{n}}.
\end{equation}
\end{theorem}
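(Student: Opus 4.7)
The plan is to combine the layer-cake decomposition of the product measure $\mu$ (with respect to the super-level sets of its density) with the Lutwak-Yang-Zhang $L_p$-BMI \eqref{e:LYZBM} applied to sectional slices of $A$ and $B$. Since each marginal density $\phi_i$ is quasi-concave with maximum at the origin, its super-level set $I_i(r_i) = \{\phi_i \geq r_i\}$ is an interval containing $0$. Writing $\phi_i(x_i) = \int_0^{\infty} \mathbf{1}_{I_i(r_i)}(x_i)\,dr_i$ and applying Fubini gives
\[
\mu(E) = \int_{(0,\infty)^n} |E \cap J_r|_n\, dr, \qquad J_r := I_1(r_1) \times \cdots \times I_n(r_n),
\]
for any Borel set $E \subset \R^n$, with each box $J_r$ containing the origin.

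Next, H\"older's inequality yields $(1-t)^{1/p}(1-\lambda)^{(p-1)/p} + t^{1/p}\lambda^{(p-1)/p} \leq 1$ for every $p \geq 1$ and $\lambda \in [0,1]$, with equality at $\lambda = t$. Since $0 \in J_r$ and $J_r$ is a box, this sub-additivity forces any $L_p$-Minkowski combination of subsets of $J_r$ to remain in $J_r$. The weakly unconditional property of $A$ and $B$ passes to $A \cap J_r$ and $B \cap J_r$ (zeroing a coordinate preserves both membership in the unconditional set and membership in $J_r$, which contains $0$). These two observations combine to give the key inclusion
\[
(1-t) \cdot_p (A \cap J_r) +_p t \cdot_p (B \cap J_r) \subset C \cap J_r, \qquad C := (1-t) \cdot_p A +_p t \cdot_p B,
\]
and applying the LYZ $L_p$-BMI \eqref{e:LYZBM} to $A \cap J_r$ and $B \cap J_r$ delivers the pointwise bound
\[
|C \cap J_r|_n^{p/n} \geq (1-t) |A \cap J_r|_n^{p/n} + t |B \cap J_r|_n^{p/n}.
\]

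The remaining task is to integrate this pointwise estimate over $r$ and recover the measure-theoretic statement. Setting $u(r) := |A \cap J_r|_n$, $v(r) := |B \cap J_r|_n$, $w(r) := |C \cap J_r|_n$, one has $\mu(A) = \int u\, dr$, $\mu(B) = \int v\, dr$, $\mu(C) = \int w\, dr$, so the desired inequality reduces to
\[
\biggl( \int w\, dr \biggr)^{p/n} \geq (1-t) \biggl( \int u\, dr \biggr)^{p/n} + t \biggl( \int v\, dr \biggr)^{p/n}.
\]
I expect this integration step to be the main obstacle. For $p \geq n$ it follows immediately from the reverse Minkowski inequality on $L^{n/p}$ (valid because $n/p \leq 1$), applied to the functions $(1-t) u^{p/n}$ and $t v^{p/n}$. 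For $1 < p < n$ a more delicate treatment is required; the most promising route appears to be an induction on the dimension carried out via Fubini on the product structure of $\mu$ (slicing off one coordinate at a time, and using that slices of weakly unconditional sets remain weakly unconditional), with the one-dimensional base case handled by the preceding argument, where $n/p = 1/p \leq 1$ for every $p \geq 1$ so reverse Minkowski applies throughout.
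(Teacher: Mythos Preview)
Your layer-cake decomposition and the key inclusion
\[
(1-t)\cdot_p(A\cap J_r)+_p t\cdot_p(B\cap J_r)\subset C\cap J_r
\]
are correct (only $0\in J_r$ and convexity of each $I_i(r_i)$ are needed here), and for $p\geq n$ the argument via reverse Minkowski in $L^{n/p}$ is complete and more direct than the paper's route. The gap is exactly where you flagged it: for $1<p<n$ the integration step fails, and the induction you sketch does not repair it.

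The obstacle is structural. With $q=n/p>1$, Minkowski's inequality in $L^q$ gives
\[
\bigl\|(1-t)u^{p/n}+t\,v^{p/n}\bigr\|_q\leq(1-t)\|u^{p/n}\|_q+t\,\|v^{p/n}\|_q,
\]
the reverse of what you need; the pointwise bound $w\geq M^t_{p/n}(u,v)$ simply does not force $\int w\geq M^t_{p/n}\bigl(\int u,\int v\bigr)$ in this regime. Your proposed induction on $n$ does not supply the missing structure, because the $L_p$-Minkowski combination does not slice well: if $z=(z_1,\bar z)\in C$ arises from $x,y,\lambda$, then $\bar z\in\alpha A_{x_1}+\beta B_{y_1}$ with $\alpha=(1-t)^{1/p}(1-\lambda)^{(p-1)/p}$, $\beta=t^{1/p}\lambda^{(p-1)/p}$, but the $\lambda$ governing the $\bar z$-part is locked to the one in the $z_1$-part. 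Hence $C_{z_1}$ need not contain the full $(n-1)$-dimensional combination $(1-t)\cdot_p A_{x_1}+_p t\cdot_p B_{y_1}$, and the inductive hypothesis (the $L_p$-BMI in $\R^{n-1}$ with exponent $p/(n-1)$) cannot be invoked on slices. Nor does integrating the $r$-variables one at a time help: the exponent in the pointwise inequality remains $p/n$, not $p/1$, so the one-dimensional reverse Minkowski never becomes applicable.

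The paper circumvents this by proving first a \emph{multiplicative} Pr\'ekopa--Leindler type inequality (Theorem~\ref{t:LpPrekopaLeindler0}, Corollary~\ref{t:goodlemma}): for each $\lambda\in(0,1)$,
\[
\mu(C)\geq\left[\Bigl(\tfrac{1-t}{1-\lambda}\Bigr)^{1-\lambda}\Bigl(\tfrac{t}{\lambda}\Bigr)^{\lambda}\right]^{n/p}\mu(A)^{1-\lambda}\mu(B)^{\lambda}.
\]
Geometric means tensorize, so this functional statement inducts on $n$ one coordinate at a time with no exponent obstruction (the $\lambda$ is held fixed throughout the induction). Only after $\mu(A),\mu(B)$ have been assembled as scalars does one optimize over $\lambda$ to recover the additive $M^t_{p/n}$ form; that final step is a one-variable identity and involves no integration. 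Your route attempts the additive combination pointwise \emph{before} integrating, and that is precisely what forces the losing battle with Minkowski's inequality when $p<n$.
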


In Sections $3$ and $4$, the detailed proofs of $L_p$-Borell-Brascamp-Lieb inequality for functions and $L_p$-Pr\'ekopa-Leindler type inequality for product measures with quasi-concave densities are presented, respectively. That is,
$L_p$-BBL inequality for functions states:\emph{
	Let $p \in [1,\infty)$, $0 \leq s < \infty$, and $t \in [0,1]$. Let $f,g,h \colon \R^n \to \R_+$ be a triple of bounded integrable functions. Suppose, in addition, that this triple satisfies the condition
	\begin{equation}\label{e:LpBBLassumption}
	h\left((1-t)^{\frac{1}{p}}(1-\lambda)^{\frac{p-1}{p}}x + t^{\frac{1}{p}}\lambda^{\frac{p-1}{p}}y\right) \geq \left[(1-t)^{\frac{1}{p}}(1-\lambda)^{\frac{p-1}{p}} f(x)^{\frac{1}{s}} + t^{\frac{1}{p}}\lambda^{\frac{p-1}{p}}g(y)^{\frac{1}{s}}\right]^s
	\end{equation}
	for every $x \in \text{supp}(f)$, $y\in \text{supp}(g)$ and every $\lambda \in [0,1]$. Then the following integral inequality holds:
	\begin{equation}\label{e:LPBBLinequality0}
	\int_{\R^n}h(x)dx \geq M_{\frac{p}{n+s}}^t\left(\int_{\R^n}f(x)dx,\int_{\R^n}g(x)dx \right);
	\end{equation}}
 and $L_p$-PLI for product measures with quasi-concave densities states as: \emph{Let $p > 1$, $t \in [0,1]$, and $\mu = \mu_1 \times \cdots \times \mu_n$ be a product measure on $\R^n$, where, for each $i = 1,\dots,n,$ $\mu_i$ is a measure on $\R$ having a quasi-concave density $\phi \colon \R \to \R_+$ with maximum at origin. Let $f,g,h \colon \R^n\to \R_+$ be a triple of measurable functions, with $f,g$ weakly unconditional and positively decreasing, that satisfy the condition
	\begin{equation}\label{e:lpprekopaleindlerassumption0}
	h((1-t)^{1/p}(1-\lambda)^{(p-1)/p}x + t^{1/p}\lambda^{(p-1)/p}y) \geq f(x)^{(1-t)^{1/p}(1-\lambda)^{(p-1)/p}}g(y)^{t^{1/p}\lambda^{(p-1)/p}}
	\end{equation}
	for every $x \in \text{supp}(f), y \in \text{supp}(g)$, and every $0 < \lambda < 1$. The the following integral inequality holds:
	\begin{equation}\label{e:lpprekopaleindlerconclusion0}
	\int_{\R^n} h d\mu \geq \sup_{0 < \lambda < 1}\left\{\left[\left(\frac{1-t}{1-\lambda}\right) \left(\frac{t}{\lambda}\right)\right]^{\frac{n}{p}}\left(\int_{\R^n} f^{\left(\frac{1-t}{1-\lambda}\right)^{\frac{1}{p}}} d\mu \right)^{1-\lambda}\left(\int_{\R^n}g^{\left(\frac{t}{\lambda}\right)^{\frac{1}{p}}} d\mu \right)^{\lambda} \right\}.
	\end{equation}}
 For $p=1$, formula (\ref{e:LPBBLinequality0}) recovers the classical BBL inequality (\ref{e:BBLconclusion}) and if $\mu$ is the Lebesgue measure, (\ref{e:lpprekopaleindlerconclusion0}) goes back to the classical Pr\'ekopa-Leindler inequality.

 In Section $5$, similar to the definition of surface area for convex bodies, the $L_p$-$\mu$-surface area of a  measure $\mu$-integrable function is defined with respect to $L_{p,s}$--supremal convolution for functions.
 Let $\mu$ be a Borel measure on $\R^n$, $p \geq 1$, and $s \in [0, \infty]$. We define the $L_p$-$\mu$-surface area of a $\mu$-integrable function $f \colon \R^n \to \R_+$ with respect to a $\mu$-integrable function $g$ by 
 \[
 S_{\mu,p,s}(f,g) := \liminf_{\e \to 0^+} \frac{\int_{\R^n} f \oplus_{p,s} (\e \times_{p,s} g) d\mu -\int_{\R^n}f d\mu }{\e}.
 \]
Detailed proof of the  $L_p$ functional counterparts of $L_p$-Minkowski's first inequality ($L_p$-MFI for measures in Theorem \ref{t:functionalMink1st}) for $ S_{\mu,p,s}(f,g)$ and $L_p$-isoperimetric type inequality ($L_p$-ISMI for measures in Theorem \ref{t:lpisoperimetricinequalitygeneral}) are presented. 

It is curious to know whether Theorem~\ref{t:LpPLmeasures0} can be proved, up to some constant for the log concave measure instead of the product measure.  One typical method is that it can also be solved through considering such a problem for functions.  Therefore, Section~$6$ concerns the establishment of  a functional counterpart of the $L_p$-Gardner-Zvavitch conjecture (cf. \cite{EM,GZ,HKL,KL});
that is, we show a special $L_p$-BMI for $s=\infty$ of the  $L_{p,s}$--supremal convolution of normalized log-concave functions; i.e.,
	let $p \geq 1, t \in [0,1]$, and $f,g \in \mathcal{L}^n$.  Then
\begin{equation}\label{e:lpfuncxtionalGZ}
\begin{split}
\left(\frac{1}{\|(1-t) \cdot_{p,\infty}f \oplus_{p,\infty} t \cdot_{p,\infty} g\|_{\infty}}\ \int_{\R^n}(1-t) \cdot_{p,\infty}f \oplus_{p,\infty} t \cdot_{p,\infty} g dx\right)^{\frac{p}{n}}\geq\\
\frac{1}{C^p}\cdot \left[(1-t) \left( \frac{1}{\|f\|_{\infty}}\int_{\R^n} f(x)dx\right)^{\frac{p}{n}} + t \left(\frac{1}{\|g\|_{\infty}}\int_{\R^n} g(x)dx\right)^{\frac{p}{n}}\right]
\end{split}
\end{equation}
where $C>1$ is an absolute constant. A natural corollary with this inequality for functions derived our results for measures with the uniform constant bound in Corollary \ref{t:LpGZmeasures} which is similar to Theorem \ref{t:LpPLmeasures0} up to a constant $1/C^p$.
In another way, the inequality turns out to be quite obvious when the measures are taken to have radial decay with respect to the scalar of Minkowski convex combination for convex bodies, i.e., a measure satisfying $\mu(tA) \geq t^n \mu(A)$ for any convex set $A$ containing the origin, with the constant $C = 2$ (see Remark~\ref{t:GZremark} below).

\section{ $L_p$-Brunn-Minkowski type inequalities  for measures}

In this section we propose that the $L_p$-Borell-Brascamp-Lieb inequality ($L_p$-BBL) for $s \in [0,+\infty)$ (Theorem \ref{t:LPBBL0}) for functions and an $L_p$-Pr\'ekopa-Leindler type inequality ($L_p$-PLI) for product measures with quasi-concave densities (Theorem \ref{t:LpPrekopaLeindler0}), each of which, respectively, leads to $L_p$-BMI for measures with $(1/s)$-concave densities in Theorem~\ref{t:Lpsconcavemeasures} and $L_p$-BMI for product measures with quasi-concave densities in Theorem~\ref{t:LpPLmeasures}.  We leave the proof of $L_p$-BBL type inequality for $s \in [0,+\infty)$  in Section \ref{t:LPBBL333} and proof of $L_p$-PLI for product measures with quasi-concave densities in Section \ref{pplleindlerienquality}, respectively. Firstly we assume the following $L_p$-BBL inequality holds true. 

\begin{theorem}[$L_p$-BBL inequality for functions] \label{t:LPBBL0} 
Let $p \in [1,\infty)$, $0 \leq s < \infty$, and $t \in [0,1]$. Let $f,g,h \colon \R^n \to \R_+$ be a triple of bounded integrable functions. Suppose, in addition, that this triple satisfies the condition
\begin{equation}\label{e:LpBBLassumption0}
h\left((1-t)^{\frac{1}{p}}(1-\lambda)^{\frac{p-1}{p}}x + t^{\frac{1}{p}}\lambda^{\frac{p-1}{p}}y\right) \geq \left[(1-t)^{\frac{1}{p}}(1-\lambda)^{\frac{p-1}{p}} f(x)^{\frac{1}{s}} + t^{\frac{1}{p}}\lambda^{\frac{p-1}{p}}g(y)^{\frac{1}{s}}\right]^s
\end{equation}
for every $x \in \text{supp}(f)$, $y\in \text{supp}(g)$ and every $\lambda \in [0,1]$. Then the following integral inequality holds:
\begin{equation}\label{e:LPBBLinequality1}
\int_{\R^n}h(x)dx \geq M_{\frac{p}{n+s}}^t\left(\int_{\R^n}f(x)dx,\int_{\R^n}g(x)dx \right).
\end{equation}
\end{theorem}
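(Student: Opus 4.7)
The plan is to reduce Theorem~\ref{t:LPBBL0} to the classical Borell-Brascamp-Lieb inequality (Theorem~\ref{t:BBL}) via a rescaling trick that absorbs the LYZ-style coefficients into an ordinary convex combination, and then to sharpen the resulting one-parameter family of inequalities to the claimed bound by Hölder's inequality in the free parameter $\lambda$.

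For the first step, fix $\lambda \in (0,1)$ and abbreviate
\[
a := (1-t)^{\frac{1}{p}}(1-\lambda)^{\frac{p-1}{p}}, \qquad b := t^{\frac{1}{p}}\lambda^{\frac{p-1}{p}}, \qquad c := a+b, \qquad \tau := b/c.
\]
Define rescaled functions $\tilde{h}(z) := h(cz)$, $\tilde{f} := c^s f$, and $\tilde{g} := c^s g$. Since $c(1-\tau) = a$ and $c\tau = b$, the hypothesis \eqref{e:LpBBLassumption0} at this $\lambda$ reads
\[
\tilde{h}((1-\tau)x + \tau y) = h(ax+by) \geq \bigl[a f(x)^{1/s} + b g(y)^{1/s}\bigr]^s = \bigl[(1-\tau)\tilde{f}(x)^{1/s} + \tau \tilde{g}(y)^{1/s}\bigr]^s,
\]
which is exactly the classical BBL hypothesis \eqref{e:BBLassumption} for $(\tilde{f},\tilde{g},\tilde{h})$ with parameter $\tau$ and concavity index $\alpha = 1/s$. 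Applying Theorem~\ref{t:BBL} with $\gamma = 1/(n+s)$, and using $\int \tilde{h} = c^{-n}\int h$, $\int \tilde{f} = c^{s}I_f$, $\int \tilde{g} = c^{s}I_g$ (where $I_f := \int f$, $I_g := \int g$), along with the $1$-homogeneity of $M^{\tau}_{1/(n+s)}$, yields
\[
\int_{\R^n} h \;\geq\; c^{\,n+s} M^{\tau}_{1/(n+s)}(I_f, I_g) \;=\; \bigl(a\, I_f^{1/(n+s)} + b\, I_g^{1/(n+s)}\bigr)^{n+s}.
\]

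Since this holds for every $\lambda \in (0,1)$, the next step is to maximize the right-hand side over $\lambda$. Writing
\[
a\, I_f^{\frac{1}{n+s}} + b\, I_g^{\frac{1}{n+s}} = (1-\lambda)^{\frac{p-1}{p}}\!\cdot(1-t)^{\frac{1}{p}} I_f^{\frac{1}{n+s}} + \lambda^{\frac{p-1}{p}}\!\cdot t^{\frac{1}{p}} I_g^{\frac{1}{n+s}},
\]
and applying Hölder's inequality with conjugate exponents $p/(p-1)$ and $p$ (noting $(1-\lambda) + \lambda = 1$), one obtains the bound
\[
a\, I_f^{\frac{1}{n+s}} + b\, I_g^{\frac{1}{n+s}} \;\leq\; \bigl((1-t) I_f^{\frac{p}{n+s}} + t\, I_g^{\frac{p}{n+s}}\bigr)^{\frac{1}{p}} \;=\; M^{t}_{p/(n+s)}(I_f,I_g)^{1/(n+s)}.
\]
Equality in Hölder is achieved at the unique $\lambda \in (0,1)$ with $(1-\lambda)/\lambda = (1-t)I_f^{p/(n+s)}/(t\, I_g^{p/(n+s)})$, so the supremum of the lower bound over $\lambda$ equals $M^{t}_{p/(n+s)}(I_f, I_g)$, giving the desired inequality \eqref{e:LPBBLinequality1}.

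The main technical content is really just identifying the correct rescaling: one must simultaneously dilate $h$'s argument by $c$ and the values of $f, g$ by $c^s$ so that the hypothesis becomes the standard BBL hypothesis in a \emph{convex} combination. Once this is spotted, the rest is bookkeeping plus a clean Hölder estimate. For $p=1$ the trick degenerates ($a=1-t$, $b=t$, $c=1$, $\tau=t$) and the statement reduces to the classical BBL directly, with no optimization step needed. Degenerate cases ($I_f = 0$, $I_g = 0$, or $t\in\{0,1\}$) make either side of \eqref{e:LPBBLinequality1} trivial, and the boundary value $s = 0$ corresponds to $\alpha=\infty$ in Theorem~\ref{t:BBL}, producing $M^{t}_{p/n}$ on the right as expected.
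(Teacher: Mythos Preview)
Your proof is correct and takes a genuinely different route from the paper's. The paper proceeds by lifting the functions to bodies of revolution in $\R^{n+s}$ (for integer $s$), applying the $L_p$-Brunn--Minkowski inequality \eqref{e:LYZBM} for sets to these bodies, and then extending to rational and real $s$ by a product construction and approximation. Your argument instead fixes $\lambda$, rescales so that the LYZ coefficients $a,b$ become a genuine convex combination $(1-\tau,\tau)$, applies the classical Borell--Brascamp--Lieb inequality directly, and then optimizes over $\lambda$ via H\"older. This is considerably more elementary: it avoids the body-of-revolution machinery, works uniformly for all $s\in[0,\infty)$ without the integer/rational/real case split, and makes transparent that the $L_p$-BBL inequality is really just the classical BBL plus a one-parameter optimization. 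The paper's approach, on the other hand, has the conceptual merit of exhibiting the functional inequality as a direct shadow of the $L_p$-BMI for sets in higher dimension, mirroring Klartag's ``marginals of geometric inequalities'' philosophy; yours trades that geometric picture for brevity. One small remark: when you invoke Theorem~\ref{t:BBL} for $(\tilde f,\tilde g,\tilde h)$ you implicitly use that the BBL hypothesis is vacuous off the supports because $M^\tau_{1/s}(a,b)=0$ whenever $ab=0$; this is consistent with the paper's convention and worth stating explicitly.
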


The minimal function $h$ which satisfies the condition \eqref{e:LpBBLassumption0} shall be referred to as the $L_{p,s}$--supremal convolution; that is, the function $h_{p,t,s} \colon \R^n \to \R_+$ defined by 
\begin{equation}\label{e:newsupcolvolution}
h_{p,t,s}(z)=
\sup_{0\leq \lambda \leq 1}\left( \sup_{z=(1-t)^{\frac{1}{p}}(1-\lambda)^{\frac{p-1}{p}}x + t^{\frac{1}{p}}\lambda^{\frac{p-1}{p}}y} \left[(1-t)^{\frac{1}{p}}(1-\lambda)^{\frac{p-1}{p}} f(x)^{\frac{1}{s}} + t^{\frac{1}{p}}\lambda^{\frac{p-1}{p}}g(y)^{\frac{1}{s}}\right]^s\right),   
\end{equation}
where the supremum combined together represents the supremum taken over 
\[
z \in (1-t)\cdot_p \text{supp}(f) +_p t \cdot_p \text{supp}(g)
\]
 and we set $h_{p,t,s}(z)=0$ otherwise. Note that this definition is well defined whenever $f,g \colon \R^n \to \R_+$ are measurable functions and $s \in [-\infty,\infty]$; the limits $s = 0,\pm \infty$ should be understood in the usual sense. Moreover, when $p=1$, $h_{p,t,s}$ recovers the usual $(t,s)$-supremal convolution appearing in \eqref{e:supconvolution}.

Here we define a new interpretation of the $L_p$ sum of functions---$L_{p,s}$--supremal convolution as follows.

\begin{definition}
Let $p \in [1,\infty)$, $s \in [0,\infty]$, and $\alpha,\beta >0$. Given non-negative functions $f,g \colon \R^n \to \R_+$, we define the addition $\oplus_{p,s}$ of $f$ and $g$ as follows 
\[
[f \oplus_{p,s} g](z) = \sup_{0 \leq \lambda \leq 1}\left(\sup_{z = (1-\lambda)^{(p-1)/p}x + \lambda^{(p-1)/p}y}(1-\lambda)^{(p-1)/p} f(x)^{1/s} + \lambda^{(p-1)/p} g(y)^{1/s}\right)^s,
\]
where the second supremum is taken over all ways to write $z = (1-\lambda)^{(p-1)/p}x+\lambda^{(p-1)/p}y$ with $x \in \text{supp}(f)$ and $y \in \text{supp}(g)$.  

Additionally, given any scalar $\alpha > 0$, it can be checked easily that the scalar multiplication $\times_{p,s}$ satisfies
\[
(\alpha \times_{p,s} f)(x) = \alpha^{s/p} f\left(\frac{x}{\alpha^{1/p}} \right).
\]
More generally, given $\alpha, \beta >0$, we define the $L_{p,s}$--supremal convolution of $f$ and $g$ with respect to the constants $\alpha$ and $\beta$ by $\alpha \times_{p,s} f \oplus_{p,s} \beta \times_{p,s} g$. 
\end{definition}

Therefore, when $\alpha=1-t$ and $\beta = t$ for some $t \in [0,1]$, denote the $L_{p,s}$--supremal convolution with perimeter $t$ as
\[h_{p,t,s}:= (1-t) \cdot_{p,s} f \oplus_{p,s} t \cdot_{p,s} g.
\]

\begin{proposition} Let $p >1$, $t \in [0,1]$, $s \in [-\infty,\infty]$. Then we have that 
\[
(1-t) \cdot_{p,s} f \oplus_{p,s} t \cdot_{p,s}g
\]
is $\left(\frac{1}{s}\right)$-concave whenever $f,g \colon \R^n \to \R_+$ are $\left(\frac{1}{s}\right)$-concave as well.

\end{proposition}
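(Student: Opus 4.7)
The plan is to reduce the $(1/s)$-concavity of $(1-t)\cdot_{p,s} f \oplus_{p,s} t\cdot_{p,s} g$ to joint concavity of an auxiliary perspective function. First I would check that the scalar operation $\alpha\times_{p,s}$ already preserves $(1/s)$-concavity: writing $F = f^{1/s}$, the identity $(\alpha\times_{p,s} f)^{1/s}(x) = \alpha^{1/p} F(x/\alpha^{1/p})$ exhibits $(\alpha\times_{p,s} f)^{1/s}$ as a positive multiple of a linear reparameterization of the concave function $F$, hence itself concave. So it suffices to prove that $\oplus_{p,s}$ preserves $(1/s)$-concavity; the full statement then follows by composing the two operations applied to $f$ and $g$.

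Focusing on $s\in(0,\infty)$, I would parameterize the inner supremum in the definition of $\oplus_{p,s}$ by the pair $(a,b):=((1-\lambda)^{(p-1)/p},\lambda^{(p-1)/p})$, which traces the curve $\mathcal{A}=\{(a,b)\colon a^q+b^q=1,\ a,b\geq 0\}$ with $q=p/(p-1)$, and perform the change of variables $u=ax$, $v=by$. With $G=g^{1/s}$, this recasts the problem as the concavity in $z$ of
\[
[f \oplus_{p,s} g]^{1/s}(z) \;=\; \sup_{(a,b)\in\mathcal{A}} \Psi(z,a,b), \qquad \Psi(z,a,b) := \sup_{z=u+v}\bigl[a F(u/a)+bG(v/b)\bigr].
\]
The analytic heart of the argument is to show that $\Psi$ is jointly concave in $(z,a,b)$. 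This rests on the classical fact that the perspective $(u,a)\mapsto aF(u/a)$ of a concave function is jointly concave: given near-optimal decompositions $z_i=u_i+v_i$ at two base points, setting $u=(1-\tau)u_0+\tau u_1$ and $v=(1-\tau)v_0+\tau v_1$ produces a feasible decomposition at the interpolated $(z,a,b)$, and applying perspective concavity to each summand delivers the joint concavity inequality for $\Psi$.

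Finally I would extend the outer supremum from the curve $\mathcal{A}$ to the convex $L_q$-disk $\mathcal{A}'=\{(a,b)\colon a^q+b^q\leq 1,\ a,b\geq 0\}$. The idea is that $\Psi$ is non-decreasing in each of $a$ and $b$, so any interior point of $\mathcal{A}'$ is dominated by a boundary point of $\mathcal{A}$ without decreasing $\Psi(z,\cdot,\cdot)$; hence $\sup_{\mathcal{A}}\Psi=\sup_{\mathcal{A}'}\Psi$. Because $\mathcal{A}'$ is convex and $\Psi$ is jointly concave, the marginal $z\mapsto\sup_{(a,b)\in\mathcal{A}'}\Psi(z,a,b)$ is concave in $z$, which is precisely the desired $(1/s)$-concavity of $f\oplus_{p,s}g$. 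The boundary regimes $s=0,\pm\infty$ are then recovered by passing to the limit in the defining inequality. The main obstacle I expect is the monotonicity of $\Psi$ in $(a,b)$: the natural proof invokes $F(w/a')\geq (a/a')F(w/a)$ via a convex combination of $w/a$ with the origin, which requires $0\in\text{supp}(f)$ and $F(0)\geq 0$; accommodating a general support will likely demand either a translation/approximation device or a finer direct comparison exploiting the specific structure of the supremal convolution.
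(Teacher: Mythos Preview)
Your route is genuinely different from the paper's. The paper works directly with the defining double supremum: for each fixed $\lambda$ it shows the auxiliary function
\[
u_\lambda(x,y)=\bigl[(1-t)^{1/p}(1-\lambda)^{(p-1)/p}f(x)^{1/s}+t^{1/p}\lambda^{(p-1)/p}g(y)^{1/s}\bigr]^s
\]
is $(1/s)$-concave on $\text{supp}(f)\times\text{supp}(g)$, and then, for a convex combination $z=(1-t')z_1+t'z_2$, it selects near-optimal data $(x_i,y_i)$ together with a \emph{single} $\lambda$ that is $\varepsilon$-optimal for both $m(z_1)$ and $m(z_2)$. That simultaneous choice of $\lambda$ is asserted rather than justified, and in general the maximizing $\lambda$ depends on the point; your perspective-function strategy is precisely what repairs this, because it promotes $\lambda$ (via $(a,b)$) to a bona fide variable in a jointly concave $\Psi$ and then replaces the non-convex curve $\mathcal{A}$ by the convex disk $\mathcal{A}'$ so that the partial supremum legitimately preserves concavity. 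In this sense your argument buys a cleaner handling of the outer supremum, at the cost of the monotonicity step you flag.

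That monotonicity obstacle is real but mild in context: once $0\in\text{supp}(f)\cap\text{supp}(g)$ (standard in the $L_p$-Minkowski setting and implicitly used elsewhere in the paper), one has $a\cdot\text{supp}(f)\subset a'\cdot\text{supp}(f)$ for $a<a'$, and since $F=f^{1/s}\geq 0$ automatically, the perspective inequality $a'F(u/a')\geq aF(u/a)$ follows from concavity with $F(0)\geq 0$; so $\sup_{\mathcal{A}}\Psi=\sup_{\mathcal{A}'}\Psi$ goes through. One genuine gap in your outline: you treat only $s\in(0,\infty)$ and then say ``the boundary regimes $s=0,\pm\infty$ are recovered by passing to the limit,'' but this skips the entire open range $s\in(-\infty,0)$, which is not a boundary and where $F=f^{1/s}$ is convex rather than concave. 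There the perspective is jointly convex and the roles of sup and concavity flip (one must show $m^{1/s}$ is convex), so a parallel but separate argument is needed rather than a limiting one.
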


\begin{proof} Follows the similar idea of \cite[Proposition 2.1]{BCF}, we give the proof by the definition of $L_{p,s}$--supremal convolution with parameter $t$.  First, assume that $s \neq \pm \infty$. For each $0\leq \lambda \leq 1$, denote
\[
u_{\lambda}(x,y) := \left[(1-t)^{\frac{1}{p}}(1-\lambda)^{\frac{p-1}{p}}f(x)^{\frac{1}{s}}+t^{\frac{1}{p}}\lambda^{\frac{1}{p}}g(y)^{\frac{1}{s}} \right]^s, \quad x \in \text{supp}(f), \quad y \in \text{supp}(g),
\]
and for $z \in (1-t) \cdot_p \text{supp}(f) +_p t \cdot_p \text{supp}(g),$ let 
\begin{align*}
m(z) &:=[(1-t) \cdot_{p,s} f \oplus_{p,s} t \cdot_{p,s}g](z)\\
&=\sup_{0\leq \lambda \leq 1}\left( \sup_{z=(1-t)^{\frac{1}{p}}(1-\lambda)^{\frac{p-1}{p}}x + t^{\frac{1}{p}}\lambda^{\frac{p-1}{p}}y} \left[(1-t)^{\frac{1}{p}}(1-\lambda)^{\frac{p-1}{p}} f(x)^{\frac{1}{s}} + t^{\frac{1}{p}}\lambda^{\frac{p-1}{p}}g(y)^{\frac{1}{s}}\right]^s\right),
\end{align*}
and $m(z) =0$ otherwise. 

For each fixed $\lambda \in [0,1]$, we claim that the functions $u_{\lambda}$ is $\left(\frac{1}{s}\right)$-concave on $\text{supp}(f) \times \text{supp}(g)$. Set $(x,y) =(1-t')(x_1,y_1) + t'(x_2,y_2)$ for some $t' \in [0,1]$, $x_1,x_2 \in \text{supp}(f)$ and $y_1,y_2 \in \text{supp}(g)$. Then we have 
\begin{align*}
u_{\lambda}(x,y)&=\left\{(1-t)^{\frac{1}{p}}(1-\lambda)^{\frac{p-1}{p}}f((1-t')x_1+t'x_2)^{\frac{1}{s}}+t^{\frac{1}{p}}\lambda^{\frac{1}{p}}g((1-t')y_1+t'y_2)^{\frac{1}{s}} \right\}^s\\
&\geq \left\{(1-t)^{\frac{1}{p}}(1-\lambda)^{\frac{p-1}{p}}[(1-t')f(x_1)^{\frac{1}{s}} + t'f(x_2)^{\frac{1}{s}}]  + t^{\frac{1}{p}}\lambda^{\frac{p-1}{p}}[(1-t')g(y_1)^{\frac{1}{s}} + t'g(y_2)^{\frac{1}{s}}]\right\}^s\\
&=\Big\{(1-t')[(1-t)^{\frac{1}{p}}(1-\lambda)^{\frac{p-1}{p}}f(x_1)^{\frac{1}{s}}+t^{\frac{1}{p}}\lambda^{\frac{p-1}{p}}g(y_1)^{\frac{1}{s}}]\\
&+t'[(1-t)^{\frac{1}{p}}(1-\lambda)^{\frac{p-1}{p}}f(x_2)^{\frac{1}{s}}+t^{\frac{1}{p}}\lambda^{\frac{p-1}{p}}g(y_2)^{\frac{1}{s}}]\Big\}^s\\
&=\left[(1-t')u_{\lambda}(x_1,y_1)^{\frac{1}{s}} + t'u_{\lambda}(x_2,y_2)^{\frac{1}{s}}\right]^s,
\end{align*}
which means exactly that $u_{\lambda}$ is $\left(\frac{1}{s}\right)$-concave on its support for any fixed $\lambda \in [0,1]$, as claimed.  

Now fix a decomposition $z = (1-t')z_1+t'z_2$, with $z_1,z_2$ belonging to $(1-t) \cdot_p \text{supp}(f) +_p t \cdot_p \text{supp}(g).$ Using truncation, we  assume that all the functions involved are bounded. Then, given $\e>0$, choose $x_1,x_2 \in \text{supp}(f)$, $y_1,y_2 \in \text{supp}(g)$, and $0 \leq \lambda\leq 1$ such that 
\[
z_1 = (1-t)^{\frac{1}{p}}(1-\lambda)^{\frac{p-1}{p}}x_1 + t^{\frac{1}{p}}\lambda^{\frac{p-1}{p}}y_1, \quad z_2 = (1-t)^{\frac{1}{p}}(1-\lambda)^{\frac{p-1}{p}}x_2 + t^{\frac{1}{p}}\lambda^{\frac{p-1}{p}}y_2,
\]
and such that 
\[
m(z_1) \leq u_{\lambda}(x_1,y_1)+\e, \quad m(z_2) \leq u_{\lambda}(x_2,y_2)+\e.
\]
Since each function $u_{\lambda}$ is $\left(\frac{1}{s}\right)$-concave, by setting $x= (1-t')x_1+t'x_2$ and $y= (1-t')y_1+t'y_2$, we see that 
\[
u_{\lambda}(x,y) \geq M_{\frac{1}{s}}^{t'}(u_{\lambda}(x_1,y_1),u_{\lambda}(x_2,y_2)) \geq M_{\frac{1}{s}}^{t'}((m(z_1)-\e)_+, (m(z_2)-\e)_+),
\]
where $a_+=\max\{a,0\}.$

Finally, we note that $(1-t)^{\frac{1}{p}}(1-\lambda)^{\frac{p-1}{p}}x+t^{\frac{1}{p}}\lambda^{\frac{p-1}{p}}y = (1-t')z_1+t'z_2 = z$, which implies that $u_{\lambda}(x,y) \leq m(z),$ completing the proof for $-\infty<s<+\infty$.  The case $s = \pm \infty$ follows similarly. 
\end{proof}

\begin{remark} Let $f,g \colon \R^n \to \R_+$ be measurable functions, $p \in [1,\infty)$, and  $t \in [0,1]$.
\begin{enumerate}[(a)]
	
 \item For any $p >1$, one has $h_{p,t,s}(z) \geq h_{1,t,s}(z):=h_{t,s}(z)$.
 Moreover, one may apply Theorem~\ref{t:BBL} to obtain the following inequality \emph{(}which is weaker than inequality \eqref{e:LPBBLinequality1}\emph{)}
\begin{equation}\label{e:weakLpBBL}
\int_{\R^n}h_{p,t,s}(x) dx \geq M_{\frac{1}{n+s}}\left(\int_{\R^n}f(x)dx,\int_{\R^n}g(x)dx\right),
\end{equation}
whenever $\frac{1}{s} \geq - \frac{1}{n}$.
\item Let $f = 1_{A}$ and $g= 1_B$ be characteristic functions of Borel sets $A,B \subset \R^n$, respectively.   The $L_{p,s}$--supremal convolution for $p\geq1$ extends the $L_p$-Minkowski convex combination to the functional setting in the sense that 
\begin{eqnarray}\label{e:LpextensionofMinkowski}
h_{p,t,s}(z) \nonumber&=& \sup_{0\leq \lambda \leq 1}\left( \sup_{z=(1-t)^{\frac{1}{p}}(1-\lambda)^{\frac{p-1}{p}}x + t^{\frac{1}{p}}\lambda^{\frac{p-1}{p}}y} \left[(1-t)^{\frac{1}{p}}(1-\lambda)^{\frac{p-1}{p}} 1_A(x)^{\frac{1}{s}} + t^{\frac{1}{p}}\lambda^{\frac{p-1}{p}}1_B(y)^{\frac{1}{s}}\right]^s\right)\\
&=& \sup_{0 \leq \lambda \leq 1}\left( \sup_{z=(1-t)^{\frac{1}{p}}(1-\lambda)^{\frac{p-1}{p}}x + t^{\frac{1}{p}}\lambda^{\frac{p-1}{p}}y, \ x\in A,\  y\in B} \left[(1-t)^{\frac{1}{p}}(1-\lambda)^{\frac{p-1}{p}}+ t^{\frac{1}{p}}\lambda^{\frac{p-1}{p}}\right]^s\right)\\
\nonumber&=& 1_{(1-t) \cdot_p A +_p t \cdot_p B}(z).
\end{eqnarray}
\end{enumerate}
\end{remark}
To check the third equality, using (a), we see that 
\[
h_{t,p,s}(z) \geq h_{t,1,s}(z) = 1_{(1-t)A+tB}(z) = 1.
\]
In order to check that $h_{p,t,s}(z) \leq 1$, it is enough to check that the function $w(t,\lambda) = (1-t)^{\frac{1}{p}}(1-\lambda)^{\frac{p-1}{p}} + t^{\frac{1}{p}}\lambda^{\frac{p-1}{p}}$ is bounded from above by $1$; this can be seen by checking values of $w$ on the boundary of $[0,1] \times [0,1]$ and  its derivative or using the H\"older inequality directly as
\[
\left[(1-t)^{\frac{1}{p}}(1-\lambda)^{\frac{p-1}{p}}+ t^{\frac{1}{p}}\lambda^{\frac{p-1}{p}}\right]\leq [(1-t)+t]^{1/p}[(1-\lambda)+\lambda]^{\frac{p-1}{p}}=1.
\]

We are now prepared to prove the $L_p$-BBL inequality for measures having $(1/s)$-concave densities assuming that Theorem \ref{t:LPBBL0} holds true.
	
\begin{theorem}[$L_p$-BMI for measures with $(1/s)$-concave densities]\label{t:Lpsconcavemeasures} Let $p \in[1,\infty)$, $t \in [0,1]$ and $s \in [0,\infty)$. Let $\mu$ be a measure given by $d\mu(x) = \phi(x) dx$, where $\phi \colon \R^n \to \R_+$ is a $\left(\frac{1}{s}\right)$-concave function on its support. Then, for any Borel subsets $A$ and $B$ in $\R^n$, one has 
	\begin{equation}\label{e:LpBMmeasures}
	\mu((1-t) \cdot_p A +_p t \cdot_p B) \geq M_{\frac{p}{n+s}}^t(\mu(A),\mu(B)). 
	\end{equation}
	
\end{theorem}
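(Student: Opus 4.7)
The natural approach is to apply the $L_p$-Borell-Brascamp-Lieb inequality (Theorem~\ref{t:LPBBL0}) to the triple
\[
f(x) = \phi(x)\mathbf{1}_A(x), \qquad g(x) = \phi(x)\mathbf{1}_B(x), \qquad h(x) = \phi(x)\mathbf{1}_{(1-t) \cdot_p A +_p t \cdot_p B}(x),
\]
so that $\int_{\R^n} f\,dx = \mu(A)$, $\int_{\R^n} g\,dx = \mu(B)$, and $\int_{\R^n} h\,dx = \mu((1-t)\cdot_p A +_p t\cdot_p B)$. With these identifications the conclusion~\eqref{e:LPBBLinequality1} of Theorem~\ref{t:LPBBL0} specializes immediately to the desired estimate~\eqref{e:LpBMmeasures}, so the entire content of the proof reduces to verifying hypothesis~\eqref{e:LpBBLassumption0} for this triple.

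To check the hypothesis, fix $\lambda \in [0,1]$, $x \in A \cap \text{supp}(\phi)$, $y \in B \cap \text{supp}(\phi)$, and set $a = (1-t)^{1/p}(1-\lambda)^{(p-1)/p}$ and $b = t^{1/p}\lambda^{(p-1)/p}$. The point $z := ax + by$ belongs to $(1-t)\cdot_p A +_p t\cdot_p B$ by definition, so $h(z) = \phi(z)$, and the required inequality becomes
\[
\phi(ax+by) \geq \bigl[a\,\phi(x)^{1/s} + b\,\phi(y)^{1/s}\bigr]^{s}.
\]
By H\"older's inequality (as noted just after~\eqref{e:LpextensionofMinkowski}), $c := a+b \leq 1$, with equality precisely when $\lambda = t$. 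Writing $z = cw$ with $w = \frac{a}{c}x + \frac{b}{c}y$, which is a genuine convex combination lying in the (convex) support of $\phi$, the $(1/s)$-concavity of $\phi$ along the segment from $x$ to $y$ gives
\[
\phi^{1/s}(w) \geq \tfrac{a}{c}\,\phi^{1/s}(x) + \tfrac{b}{c}\,\phi^{1/s}(y).
\]
Combining this with the scaling bound $\phi^{1/s}(c w) \geq c\,\phi^{1/s}(w)$, which follows from non-negativity and concavity of $\phi^{1/s}$ via $\phi^{1/s}(c w + (1-c)\cdot 0) \geq c\,\phi^{1/s}(w) + (1-c)\phi^{1/s}(0)$ under the standard $L_p$-theoretic convention that $0 \in \text{supp}(\phi)$, then multiplying by $c$ and raising to the $s$-th power yields exactly~\eqref{e:LpBBLassumption0}. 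The case $\lambda = t$ is already the classical Borell-Brascamp-Lieb input, and for the boundary values $s = 0$ or $\phi(x) = 0$ or $\phi(y) = 0$ the inequality reduces to trivialities interpreted in the limit sense of $M_{1/s}^t$.

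With~\eqref{e:LpBBLassumption0} established, Theorem~\ref{t:LPBBL0} applies directly and produces $\int h \geq M_{p/(n+s)}^{t}(\int f, \int g)$, which is precisely~\eqref{e:LpBMmeasures}. The main technical obstacle is the gap between ordinary $(1/s)$-concavity—which controls $\phi$ only along true convex combinations $a+b = 1$—and the subconvex weighting $a + b \leq 1$ produced by the $L_p$-Minkowski combination; bridging this gap relies essentially on the non-negativity of $\phi^{1/s}$ together with the scaling step anchored at the origin, which is why the $(1/s)$-concavity hypothesis must be accompanied by the convention that the origin lies in the support of $\phi$.
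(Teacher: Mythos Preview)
Your proof is correct and follows essentially the same strategy as the paper's: define the triple $f=\phi\mathbf{1}_A$, $g=\phi\mathbf{1}_B$, $h=\phi\mathbf{1}_{(1-t)\cdot_pA+_pt\cdot_pB}$, verify hypothesis~\eqref{e:LpBBLassumption0} using the $(1/s)$-concavity of $\phi$ together with a scaling step anchored at the origin, and then invoke Theorem~\ref{t:LPBBL0}. The only cosmetic difference is in how the sub-convex combination $ax+by$ is decomposed---you write it as $c\cdot w$ with $w$ a genuine convex combination, whereas the paper writes it as a convex combination of $x$ and a contracted $y$---but both routes are algebraically equivalent and both rely (as you correctly make explicit, and the paper leaves implicit) on $0\in\text{supp}(\phi)$ so that concavity furnishes $\phi^{1/s}(cw)\geq c\,\phi^{1/s}(w)$.
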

\begin{proof}Consider the functions 
\[
f=1_A \phi, \quad g = 1_B \phi, \quad h= 1_{(1-t)\cdot_p A +_p t \cdot_p B}\phi
\]
for Borel sets $A$ and $B$.
We need to show that the above triple of functions satisfy condition \eqref{e:LpBBLassumption0} in order to give the proof of this theorem. Let $z \in (1-t) \cdot_p \text{supp}(f)+_p t \cdot_p \text{supp}(g)$ be arbitrary. Then $z=(1-t)^{\frac{1}{p}}(1-\lambda)^{\frac{p-1}{p}}x + t^{\frac{1}{p}}\lambda^{\frac{p-1}{p}}y$ for some $x \in \text{supp}(f)$, $y \in \text{supp}(g) $, and some $\lambda \in [0,1]$. Our goal is to check that
\begin{align*}
h(z)^{\frac{1}{s}} &\geq (1-t)^{\frac{1}{p}}(1-\lambda)^{\frac{p-1}{p}} f(x)^{\frac{1}{s}} + t^{\frac{1}{p}}\lambda^{\frac{p-1}{p}}g(y)^{\frac{1}{s}}\\
&=(1-t)^{\frac{1}{p}}(1-\lambda)^{\frac{p-1}{p}}\phi(x)^{\frac{1}{s}} + t^{\frac{1}{p}}\lambda^{\frac{p-1}{p}} \phi(y)^{\frac{1}{s}}. 
\end{align*}
Since $(1-t)^{\frac{1}{p}}(1-\lambda)^{\frac{p-1}{p}} \in [0,1]$, using the concavity of $\phi^{\frac{1}{s}}$, we see that $h$ must satisfy
\begin{align*}
&h(z)^{1/s}\\ &= h\left((1-t)^{\frac{1}{p}}(1-\lambda)^{\frac{p-1}{p}}x + t^{\frac{1}{p}}\lambda^{\frac{p-1}{p}}y \right)^{\frac{1}{s}}\\
&=\phi\left((1-t)^{\frac{1}{p}}(1-\lambda)^{\frac{p-1}{p}}x + t^{\frac{1}{p}}\lambda^{\frac{p-1}{p}}y\right)^{\frac{1}{s}}1_{(1-t)\cdot_pA+_pt\cdot_pB}\left((1-t)^{\frac{1}{p}}(1-\lambda)^{\frac{p-1}{p}}x + t^{\frac{1}{p}}\lambda^{\frac{p-1}{p}}y\right)\\
&\geq (1-t)^{\frac{1}{p}}(1-\lambda)^{\frac{p-1}{p}} \phi(x)^{\frac{1}{s}}+\left(1 - (1-t)^{\frac{1}{p}}(1-\lambda)^{\frac{p-1}{p}} \right) \phi\left( \frac{t^{\frac{1}{p}}\lambda^{\frac{p-1}{p}}}{1 - (1-t)^{\frac{1}{p}}(1-\lambda)^{\frac{p-1}{p}}} \cdot y \right)^{\frac{1}{s}}\\
&\geq (1-t)^{\frac{1}{p}}(1-\lambda)^{\frac{p-1}{p}} \phi(x)^{\frac{1}{s}} +t^{\frac{1}{p}}\lambda^{\frac{p-1}{p}} \phi(y)^{\frac{1}{s}}\quad \\
&=(1-t)^{\frac{1}{p}}(1-\lambda)^{\frac{p-1}{p}} (\phi(x)1_A(x))^{\frac{1}{s}} +t^{\frac{1}{p}}\lambda^{\frac{p-1}{p}} ((\phi(y)1_B(y)))^{\frac{1}{s}}\\
&=(1-t)^{\frac{1}{p}}(1-\lambda)^{\frac{p-1}{p}} f(x)^{\frac{1}{s}} +t^{\frac{1}{p}}\lambda^{\frac{p-1}{p}} g(y)^{\frac{1}{s}},
\end{align*}
where, we have used the fact that
\[
\frac{t^{\frac{1}{p}}\lambda^{\frac{p-1}{p}}}{1 - (1-t)^{\frac{1}{p}}(1-\lambda)^{\frac{p-1}{p}}} \leq 1 \iff (1-t)^{\frac{1}{p}}(1-\lambda)^{\frac{p-1}{p}} + t^{\frac{1}{p}}\lambda^{\frac{p-1}{p}} \leq 1,
\]
as desired. Hence, applying Theorem~\ref{t:LPBBL0} to the triple $f,g,h$, we conclude that 
\begin{align*}
\mu((1-t)\cdot_pA +_p t\cdot_p B) &= \int_{\R^n}h(x) dx\\
&\geq M_{\frac{p}{n+s}}^t \left(\int_{\R^n}f(x)dx, \int_{\R^n}g(x)dx \right)\\
&= M_{\frac{p}{n+s}}^t\left(\mu(A), \mu(B) \right),
\end{align*}
completing the proof.

\end{proof}

In order to establish $L_p$-BMI for product measures with quasi-concave densities (Theorem~\ref{t:LpPLmeasures}), we require a Pr\'ekopa-Leindler type inequality.  We begin with the following definition inspired by the work \cite{JesusManuel}. 
A function $f \colon \R^n \to \R_+$ is weakly unconditional if, for any $x=(x_1,\dots,x_n) \in \R^n$ and any $\e = (\e_1,\dots,\e_n) \in \{0,1\}^n$, one has 
\[
f(\e x) = f(\e_1 x_1, \dots, \e_n x_n) \geq f(x_1,\dots,x_n) = f(x). 
\]

\begin{theorem}[$L_p$-PLI for product measures with quasi-concave densities] \label{t:LpPrekopaLeindler0}  Let $p > 1$, $t \in [0,1]$, and $\mu = \mu_1 \times \cdots \times \mu_n$ be a product measure on $\R^n$, where, for each $i = 1,\dots,n,$ $\mu_i$ is a measure on $\R$ having a quasi-concave density $\phi \colon \R \to \R_+$ with maximum at origin. Let $f,g,h \colon \R^n\to \R_+$ be a triple of measurable functions, with $f,g$ weakly unconditional and positively decreasing, that satisfy the condition
\begin{equation}\label{e:lpprekopaleindlerassumption1}
h((1-t)^{1/p}(1-\lambda)^{(p-1)/p}x + t^{1/p}\lambda^{(p-1)/p}y) \geq f(x)^{(1-t)^{1/p}(1-\lambda)^{(p-1)/p}}g(y)^{t^{1/p}\lambda^{(p-1)/p}}
\end{equation}
for every $x \in \text{supp}(f), y \in \text{supp}(g)$, and every $0 < \lambda < 1$. The the following integral inequality holds:
\begin{equation}\label{e:lpprekopaleindlerconclusion1}
\int_{\R^n} h d\mu \geq \sup_{0 < \lambda < 1}\left\{\left[\left(\frac{1-t}{1-\lambda}\right) \left(\frac{t}{\lambda}\right)\right]^{\frac{n}{p}}\left(\int_{\R^n} f^{\left(\frac{1-t}{1-\lambda}\right)^{\frac{1}{p}}} d\mu \right)^{1-\lambda}\left(\int_{\R^n}g^{\left(\frac{t}{\lambda}\right)^{\frac{1}{p}}} d\mu \right)^{\lambda} \right\}.
\end{equation}

\end{theorem}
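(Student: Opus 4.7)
For each fixed $\lambda \in (0,1)$, my plan is to reduce the $L_p$-Prékopa--Leindler condition to a standard Prékopa--Leindler condition via a change of variables and then invoke a classical PL inequality for the product measure $\mu$. Set $\alpha = \left(\frac{1-t}{1-\lambda}\right)^{1/p}$ and $\beta = \left(\frac{t}{\lambda}\right)^{1/p}$, so that the $L_p$-weights factor as $(1-t)^{1/p}(1-\lambda)^{(p-1)/p} = (1-\lambda)\alpha$ and $t^{1/p}\lambda^{(p-1)/p} = \lambda\beta$. Substituting $u = \alpha x$ and $v = \beta y$ in \eqref{e:lpprekopaleindlerassumption1} converts the hypothesis into the standard PL form
\[
h((1-\lambda)u + \lambda v) \geq F(u)^{1-\lambda}G(v)^{\lambda},
\]
where $F(u) := f(u/\alpha)^{\alpha}$ and $G(v) := g(v/\beta)^{\beta}$. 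The functions $F$ and $G$ inherit the weakly unconditional and positively decreasing properties from $f$ and $g$, since these properties are preserved under positive scalar dilation and taking a positive power.

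Next, I would apply a Prékopa--Leindler inequality valid for the product measure $\mu$ restricted to weakly unconditional, positively decreasing test functions, which follows from the one-dimensional PL for quasi-concave, origin-maximal densities combined with Fubini, along the lines of \cite{JesusManuel}. This yields
\[
\int_{\R^n} h\, d\mu \geq \left(\int_{\R^n} F\, d\mu\right)^{1-\lambda}\left(\int_{\R^n} G\, d\mu\right)^{\lambda}.
\]
Undoing the substitution inside each integral gives $\int F\, d\mu = \alpha^{n}\int f^{\alpha}(x)\phi^{\otimes n}(\alpha x)\, dx$ and the analogous identity for $G$, so the combined Jacobian $(\alpha\beta)^{n} = \left[\frac{(1-t)t}{(1-\lambda)\lambda}\right]^{n/p}$ reproduces exactly the prefactor appearing in \eqref{e:lpprekopaleindlerconclusion1}. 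To replace $\phi^{\otimes n}(\alpha x)$ by $\phi^{\otimes n}(x)$, and similarly for $\beta$, I will use that quasi-concavity of $\phi$ with maximum at the origin gives $\phi(\alpha x) \geq \phi(x)$ whenever $\alpha \leq 1$, while the weakly unconditional, positively decreasing property of $f$ gives $f(u/\alpha) \geq f(u)$ whenever $\alpha \geq 1$. Assembling both cases for $\alpha$ and $\beta$ produces the bound at each fixed $\lambda$, and taking the supremum over $\lambda \in (0,1)$ yields \eqref{e:lpprekopaleindlerconclusion1}; at $\lambda = t$ one has $\alpha = \beta = 1$ and the prefactor equals $1$, so the inequality degenerates to the classical Prékopa--Leindler inequality for $\mu$.

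The hard step is the last one: since $\alpha \leq 1$ is equivalent to $\lambda \leq t$, which is equivalent to $\beta \geq 1$, except at $\lambda = t$ the two scaling factors always lie on opposite sides of unity. Quasi-concavity of $\phi$ therefore works in the favorable direction only for one of the two integrals, and the monotonicity of $f$ or $g$ must absorb the imbalance in the other. A careful combined estimate that exploits both ingredients simultaneously, rather than applying each pointwise inequality in isolation, will be required to retain the full $(\alpha\beta)^{n}$ prefactor uniformly in $\lambda$.
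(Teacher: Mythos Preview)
Your reduction to the classical Pr\'ekopa--Leindler inequality for $\mu$ via the substitution $u=\alpha x$, $v=\beta y$ is natural, but the final step does not close. After applying PL you obtain
\[
\int_{\R^n} h\,d\mu \;\geq\; \left(\alpha^{n}\!\int_{\R^n} f(x)^{\alpha}\,\phi^{\otimes n}(\alpha x)\,dx\right)^{1-\lambda}\!\left(\beta^{n}\!\int_{\R^n} g(y)^{\beta}\,\phi^{\otimes n}(\beta y)\,dy\right)^{\lambda},
\]
and you must compare each inner integral to $\int f^{\alpha}\,d\mu$ (resp.\ $\int g^{\beta}\,d\mu$). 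When, say, $\alpha>1$ and $\beta<1$, quasi-concavity gives $\phi(\beta y)\geq\phi(y)$ and hence the $G$-integral retains its full factor $\beta^{n}$; but on the $F$-side $\phi(\alpha x)\leq\phi(x)$ in general, and the monotonicity $f(u/\alpha)\geq f(u)$ yields only $\int F\,d\mu\geq\int f^{\alpha}\,d\mu$, \emph{without} the factor $\alpha^{n}$. The resulting bound is
\[
\int h\,d\mu \;\geq\; \beta^{n\lambda}\Bigl(\int f^{\alpha}\,d\mu\Bigr)^{1-\lambda}\Bigl(\int g^{\beta}\,d\mu\Bigr)^{\lambda},
\]
which falls short of the target by the factor $\alpha^{n(1-\lambda)}>1$. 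There is no evident ``combined estimate'' that recovers this missing factor: the two integrals are independent once PL has been applied, and the deficit is a fixed constant depending only on $t,\lambda,p,n$, not on $f$ or $g$. (Also note that the Jacobian that actually appears is $\alpha^{n(1-\lambda)}\beta^{n\lambda}$, not $(\alpha\beta)^{n}$ as you wrote, though this matches the stated prefactor.)

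The paper circumvents this obstacle by never rescaling the variables. It keeps the original weights $a=(1-t)^{1/p}(1-\lambda)^{(p-1)/p}$ and $b=t^{1/p}\lambda^{(p-1)/p}$, observes via H\"older that $a+b\leq 1$, and proves a one-dimensional \emph{additive} inequality (Lemma~\ref{t:lpBmonedimensional})
\[
\mu(aA+bB)\;\geq\; a\,\mu(A)+b\,\mu(B),
\]
whose proof uses precisely the fact that $a+b\leq1$ together with the maximum of $\phi$ at the origin to control the ``slack'' $1-a-b$. With this in hand, a level-set argument plus AM--GM gives the $n=1$ case, and an induction on dimension (sectioning in the first coordinate) finishes. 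The key idea you are missing is this additive one-dimensional lemma; the substitution route cannot reach it because it forces $a+b=1$ and then pays for that with an uncontrollable scaling of the density.
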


Take typical functions in above theorem, we obtain $L_p$-PLI for product measures with quasi-concave densities in terms of convex bodies in the following way.

\begin{corollary}\label{t:goodlemma}
Let $p \in (1,\infty)$, $t \in [0,1]$, and $\mu = \mu_1 \times \cdots \times \mu_n$ be a product measure on $\R^n$, where, for each $i = 1,\dots,n$, $\mu_i$ is a measure on $\R$ having a quasi-concave density with maximum at the origin. Then for any weakly unconditional measurable sets $A,B \subset \R^n$, such that $(1-t) \cdot_p A +_p t \cdot_p B$ is also measurable, one has 
\[
\mu((1-t) \cdot_p A +_p t \cdot_p B) \geq \sup_{0 < \lambda <1} \left\{\left[\left(\frac{1-t}{1-\lambda}\right)^{1-\lambda} \left(\frac{t}{\lambda}\right)^{\lambda}\right]^{ \frac{n}{p}}\mu(A)^{1-\lambda}\mu(B)^{\lambda}\right\}.
\]
\end{corollary}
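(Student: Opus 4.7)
The plan is to derive Corollary~\ref{t:goodlemma} as a direct specialization of Theorem~\ref{t:LpPrekopaLeindler0} by choosing the triple of functions to be characteristic functions of the relevant sets, namely
\[
f = 1_A, \qquad g = 1_B, \qquad h = 1_{(1-t)\cdot_p A +_p t \cdot_p B}.
\]
The idea is that for this triple, the integrals appearing in the $L_p$-Pr\'ekopa--Leindler conclusion collapse to the $\mu$-measures of $A$, $B$, and $(1-t)\cdot_p A +_p t \cdot_p B$, and the hypothesis reduces to a tautology.

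First I would verify the structural assumptions. Since $A$ and $B$ are weakly unconditional, their indicator functions inherit the weakly unconditional property directly from the set-theoretic definition, and $1_A, 1_B$ trivially satisfy the remaining qualitative requirements of Theorem~\ref{t:LpPrekopaLeindler0} on its support. Next I would check the pointwise hypothesis \eqref{e:lpprekopaleindlerassumption1}. For arbitrary $x \in A$, $y \in B$, and $\lambda \in (0,1)$, the Lutwak--Yang--Zhang definition \eqref{e:LYZMinkowskicombo} guarantees that
\[
z := (1-t)^{1/p}(1-\lambda)^{(p-1)/p}x + t^{1/p}\lambda^{(p-1)/p}y \in (1-t)\cdot_p A +_p t \cdot_p B,
\]
so $h(z) = 1$; simultaneously $f(x)^{\alpha}g(y)^{\beta} = 1$ for any positive exponents $\alpha,\beta$, so the required inequality holds trivially as $1 \geq 1$.

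Once the hypotheses are in place, the conclusion is obtained by direct substitution into \eqref{e:lpprekopaleindlerconclusion1}. The left-hand side equals $\mu((1-t)\cdot_p A +_p t \cdot_p B)$. For the right-hand side, since $(1_A)^c = 1_A$ and $(1_B)^c = 1_B$ for every $c > 0$, one has
\[
\int_{\R^n} f^{((1-t)/(1-\lambda))^{1/p}}\, d\mu = \mu(A), \qquad \int_{\R^n} g^{(t/\lambda)^{1/p}}\, d\mu = \mu(B),
\]
so that taking the supremum over $\lambda \in (0,1)$ yields precisely the stated inequality.

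There is no substantive obstacle here: the entire analytic content is absorbed into Theorem~\ref{t:LpPrekopaLeindler0}, and the corollary is purely a bookkeeping consequence. The only point that merits care is observing that positive powers of a characteristic function recover the characteristic function itself, so that the exponential factors $((1-t)/(1-\lambda))^{1/p}$ and $(t/\lambda)^{1/p}$ in \eqref{e:lpprekopaleindlerconclusion1} play no role on the integrals, and combine with the outer $(1-\lambda)$- and $\lambda$-exponents to produce the weighted geometric mean $\mu(A)^{1-\lambda}\mu(B)^{\lambda}$ that appears in the claim.
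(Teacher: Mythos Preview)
Your proposal is correct and follows exactly the same approach as the paper: set $f = 1_A$, $g = 1_B$, $h = 1_{(1-t)\cdot_p A +_p t \cdot_p B}$, observe that these satisfy the hypotheses of Theorem~\ref{t:LpPrekopaLeindler0}, and read off the conclusion. Your write-up is in fact more explicit than the paper's, which simply asserts that the verification is easy; your remark that positive powers of characteristic functions are again characteristic functions is precisely the observation needed to collapse the right-hand side of \eqref{e:lpprekopaleindlerconclusion1} to the stated form.
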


\begin{proof}
Consider the functions $f = 1_A, g=1_B$, and $h=1_{(1-t) \cdot_p A +_p t \cdot_p B}$. It is easy to check that the functions $f$ and $g$ are weakly unconditional, and that the the triple $f,g,h$ satisfies the assumption \eqref{e:lpprekopaleindlerassumption1} of Theorem~\ref{t:LpPrekopaLeindler0}, which yields the desired conclusion. 
\end{proof}

One immediate consequence of Corollary \ref{t:goodlemma} is the following $L_p$-BMI for product measures with quasi-concave densities for convex bodies.
\begin{theorem}[$L_p$-BMI for product measures with quasi-concave densities]\label{t:LpPLmeasures}  Let $p \in (1,\infty)$, $t\in [0,1]$, and $\mu = \mu_1 \times \cdots \times \mu_n$ be a product measure on $\R^n$, where, for each $i = 1,\dots,n$, $\mu_i$ is a measure on $\R$ having a quasi-concave density with maximum at the origin. Then for any weakly unconditional measurable sets $A,B \subset \R^n$, such that $(1-t) \cdot_p A +_p t \cdot_p B$ is also measurable, one has 
	\begin{equation}\label{e:LpPLmeasures}
	\mu((1-\lambda)\cdot_p A +_p \lambda \cdot_p B)^{\frac{p}{n}} \geq (1-\lambda) \mu(A)^{\frac{p}{n}} + \lambda \mu(B)^{\frac{p}{n}}.
	\end{equation}

\end{theorem}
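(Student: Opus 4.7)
The plan is to deduce Theorem~\ref{t:LpPLmeasures} as a direct consequence of Corollary~\ref{t:goodlemma} by performing a one-parameter weighted AM--GM optimization in the free parameter $\lambda$. Throughout I will assume that $t\in(0,1)$ and that $\mu(A),\mu(B)>0$; the boundary cases are straightforward (for $t\in\{0,1\}$ both sides coincide, while if $\mu(A)=0$ one invokes monotonicity of $\mu$ together with the containment $(1-t)\cdot_p A+_p t\cdot_p B\supset t^{1/p}B$ obtained by setting $\lambda=1$ in \eqref{e:LYZMinkowskicombo}, thereby reducing matters to $\mu(t^{1/p}B)\ge t^{n/p}\mu(B)$, which in turn follows from the change of variables $x=t^{1/p}y$ together with the estimate $\phi_i(t^{1/p}y_i)\ge \phi_i(y_i)$ coming from quasi-concavity of each marginal density and its maximum being at the origin).

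First I would apply Corollary~\ref{t:goodlemma} and raise both sides to the positive power $p/n$. Since the map $x\mapsto x^{p/n}$ is monotone on $[0,\infty)$ it commutes with the supremum; writing $\alpha := \mu(A)^{p/n}$ and $\beta := \mu(B)^{p/n}$ and absorbing the $p/n$-th powers through $\mu(A)^{(1-\lambda)p/n}=\alpha^{1-\lambda}$ and $\mu(B)^{\lambda p/n}=\beta^{\lambda}$, this yields
\[
\mu\big((1-t)\cdot_p A +_p t\cdot_p B\big)^{p/n}\;\ge\;\sup_{0<\lambda<1}\left(\frac{(1-t)\alpha}{1-\lambda}\right)^{1-\lambda}\!\left(\frac{t\beta}{\lambda}\right)^{\lambda}.
\]

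Second I would compute this supremum exactly. Setting $x := (1-t)\alpha/(1-\lambda)$ and $y := t\beta/\lambda$ with weights $(1-\lambda,\lambda)$, the weighted AM--GM inequality $x^{1-\lambda}y^{\lambda}\le(1-\lambda)x+\lambda y$ rewrites as
\[
\left(\frac{(1-t)\alpha}{1-\lambda}\right)^{1-\lambda}\!\left(\frac{t\beta}{\lambda}\right)^{\lambda}\;\le\;(1-t)\alpha+t\beta,
\]
with equality if and only if $x=y$, i.e.\ at the explicit value $\lambda^{*} := \tfrac{t\beta}{(1-t)\alpha+t\beta}\in(0,1)$. Substituting $\lambda^{*}$ back into the supremand verifies that the bound $(1-t)\alpha+t\beta=(1-t)\mu(A)^{p/n}+t\mu(B)^{p/n}$ is attained, so the supremum equals this value precisely, and the desired inequality follows from the previous display.

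I do not anticipate any genuine obstacle: all of the geometric content is already packaged inside Corollary~\ref{t:goodlemma}, and the remaining step is the elementary identification of a supremum of weighted geometric means with a single arithmetic mean via Young/AM--GM. The only point worth a second look is that the optimizer $\lambda^{*}$ lies in the open interval $(0,1)$ under the nondegeneracy assumption $\alpha,\beta>0$ and $t\in(0,1)$, which is immediate from its closed-form expression; consequently the statement can genuinely be labelled a corollary of Corollary~\ref{t:goodlemma} rather than an independent theorem.
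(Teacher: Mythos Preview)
Your proposal is correct and follows essentially the same route as the paper: both apply Corollary~\ref{t:goodlemma} and then select the optimizer $\lambda^{*}=\tfrac{t\mu(B)^{p/n}}{(1-t)\mu(A)^{p/n}+t\mu(B)^{p/n}}$ (you frame this via AM--GM, the paper just plugs it in and simplifies), and your treatment of the degenerate case $\mu(A)=0$ via the containment $t^{1/p}B\subset (1-t)\cdot_p A+_p t\cdot_p B$ together with quasi-concavity of the marginal densities mirrors the paper's handling of the symmetric case $\mu(B)=0$.
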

\begin{proof} We may assume that $t\in (0,1).$
Suppose that $\mu(A)\mu(B)>0$ and set
\[
1-\lambda = \frac{(1-t)\mu(A)^{\frac{p}{n}}}{(1-t)\mu(A)^{\frac{p}{n}}+ t \mu(B)^{\frac{p}{n}} }. 
\]
Then 
\[
\frac{1-t}{1-\lambda} = \frac{(1-t)\mu(A)^{\frac{p}{n}}+ t \mu(B)^{\frac{p}{n}}}{\mu(A)^{\frac{p}{n}}}, \quad \frac{t}{\lambda} = \frac{(1-t)\mu(A)^{\frac{p}{n}}+ t \mu(B)^{\frac{p}{n}}}{\mu(B)^{\frac{p}{n}}}.
\]
Consequently, applying Corollary~\ref{t:goodlemma}, we see that 
\[
\left[\left(\frac{1-t}{1-\lambda}\right)^{1-\lambda}\left(\frac{t}{\lambda}\right)^{\lambda} \right]^{\frac{n}{p}}\mu(A)^{1-\lambda}\mu(B)^{\lambda}= \left[(1-t) \mu(A)^{\frac{p}{n}} + t \mu(B)^{\frac{p}{n}}\right]^{\frac{n}{p}},
\]
which is the desired inequality in this case. 

Without loss of generality, suppose now that $\mu(B) = 0$. Using the fact that $A,B$ are weakly unconditional, we see that $0 \in B$, and moreover, $(1-t)\cdot_p A +_p t \cdot_p B \supset (1-t) \cdot_p A$. Let $\phi$ be a quasi-concave function with maximum at the origin which is the density of $\mu$. Then
\begin{align*}
\mu((1-t) \cdot_p A+_p t \cdot_p B)  &\geq \mu((1-t) \cdot_p A)\\
&= \int_{(1-t)^{1/p}A} \phi(y) dy\\
&=(1-t)^{\frac{n}{p}}\int_{A}\phi((1-t)^{1/p}x) dx\\
&=(1-t)^{\frac{n}{p}}\int_{A}\phi\left((1-t)^{\frac{1}{p}}x_1,\dots, (1-t)^{\frac{1}{p}} x_n \right)dx\\
&\geq (1-t)^{\frac{n}{p}}\int_A \phi(x_1,\dots,x_n)dx = (1-t)^{\frac{n}{p}}\mu(A).\\
\end{align*}
Consequently, we see that 
\[
\mu((1-t) \cdot_p A+_p t \cdot_p B)^{\frac{p}{n}} \geq (1-t)\mu(A)^{\frac{p}{n}} + t \mu(B)^{\frac{p}{n}},
\]
as desired.
\end{proof}
\section{$L_p$-Borell-Brascamp-Lieb inequality for functions }\label{t:LPBBL333}
The proof of $L_p$-BBL inequality for functions (Theorem~\ref{t:LPBBL0} or Theorem ~\ref{t:LPBBL}) is adopted from the ideas of Klartag in \cite{Klartag} (see also \cite{RS}). The main idea is to replace the functions with bodies of revolution and apply $L_p$-BMI for convex bodies \eqref{e:LYZBM}  in order to derive the $L_p$-BBL inequality. The proof is split into two main steps: the case when $s$ is a positive integer, and then forward to the case when $s$ is a positive rational number in lowest terms which results into the case for general nonnegative real number by approximation from rational number sequence.
We begin with the $L_p$-BBL inequality for functions with  $s\in \mathbb{N}$ in the following proposition. 

\begin{proposition}\label{t:integerlpbbl0} Let $p \in[1,\infty)$, $s \in \N$, and $t\in[0,1]$. Let $f,g,h \colon \R^n \to \R_+$ be a triple of bounded integrable functions. Suppose, in addition, that this triple satisfies the condition
\begin{equation}\label{e:integerLpBBLassumption0}
h\left((1-t)^{\frac{1}{p}}(1-\lambda)^{\frac{p-1}{p}}x + t^{\frac{1}{p}}\lambda^{\frac{p-1}{p}}y\right) \geq \left[(1-t)^{\frac{1}{p}}(1-\lambda)^{\frac{p-1}{p}} f(x)^{\frac{1}{s}} + t^{\frac{1}{p}}\lambda^{\frac{p-1}{p}}g(y)^{\frac{1}{s}}\right]^s
\end{equation}
for every $x \in \text{supp}(f)$, $y\in \text{supp}(g)$ and every $\lambda \in [0,1]$. Then the following integral inequality holds:
\begin{equation}\label{e:integerLPBBL}
\int_{\R^n}h(x)dx \geq M_{\frac{p}{n+s}}^t\left(\int_{\R^n}f(x)dx,\int_{\R^n}g(x)dx \right).
\end{equation}
\end{proposition}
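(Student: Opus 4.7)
The plan is to adapt Klartag's body-of-revolution argument (cited in the excerpt) to the $L_p$ setting and reduce the functional statement in $\R^n$ to the geometric $L_p$-BMI for measurable sets (Theorem~1.2) in $\R^{n+s}$, using that $s\in\N$ so that $\R^s$ is a bona fide Euclidean space. After discarding trivial cases ($\int f=0$ or $\int g=0$, handled directly from the hypothesis and the inclusion $(1-t)\cdot_p A+_p t\cdot_p B\supset(1-t)A+tB$ for the support of $h$), I would associate to each of the bounded integrable functions $f,g,h$ its body of revolution
\[
K_\phi=\bigl\{(x,y)\in\R^n\times\R^s:\ x\in\text{supp}(\phi),\ |y|\le \phi(x)^{1/s}\bigr\}.
\]
Measurability of $K_\phi$ follows since $(x,y)\mapsto \phi(x)^{1/s}-|y|$ is measurable, and Fubini (slicing by $x$) gives
\[
|K_\phi|_{n+s}=\omega_s\int_{\R^n}\phi(x)\,dx,
\]
where $\omega_s$ is the volume of the unit Euclidean ball in $\R^s$.

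The crux is the inclusion
\[
(1-t)\cdot_p K_f+_p t\cdot_p K_g\ \subseteq\ K_h.
\]
To verify it, I would take an arbitrary point of the left-hand side, written via the defining formula \eqref{e:LYZMinkowskicombo} as
$\bigl(\bar x,\bar y\bigr)=(1-t)^{1/p}(1-\lambda)^{(p-1)/p}(x_1,y_1)+t^{1/p}\lambda^{(p-1)/p}(x_2,y_2)$ with $(x_i,y_i)\in K_{f}\times K_g$ and $\lambda\in[0,1]$. The triangle inequality in $\R^s$ combined with the defining inequalities $|y_1|\le f(x_1)^{1/s}$, $|y_2|\le g(x_2)^{1/s}$ and then the hypothesis \eqref{e:integerLpBBLassumption0} applied to $(x_1,x_2,\lambda)$ give
\[
|\bar y|\le (1-t)^{1/p}(1-\lambda)^{(p-1)/p}f(x_1)^{1/s}+t^{1/p}\lambda^{(p-1)/p}g(x_2)^{1/s}\le h(\bar x)^{1/s},
\]
so $(\bar x,\bar y)\in K_h$. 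Observe that the two parameters called $\lambda$ (one in the $L_p$-combination of sets, one in the hypothesis) are intentionally matched; this is precisely why the $L_p$-Minkowski combination \eqref{e:LYZMinkowskicombo} is the right functional analogue. Once the inclusion is established, applying the $L_p$-BMI for measurable subsets of $\R^{n+s}$ yields $|K_h|_{n+s}\ge M_{p/(n+s)}^t(|K_f|_{n+s},|K_g|_{n+s})$, and substituting the Fubini computation and cancelling the factor $\omega_s^{\,p/(n+s)}$ gives exactly \eqref{e:integerLPBBL}.

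The main obstacle I expect is a technical one rather than a conceptual one: namely, ensuring that $(1-t)\cdot_p K_f+_p t\cdot_p K_g$ is itself measurable so that Theorem~1.2 applies. For general bounded integrable $f,g$ the supports can be complicated, so I would handle this by first reducing to the case where $f,g$ are simple functions supported on finite unions of rectangles (via standard monotone-approximation, justified by dominated convergence and the boundedness of $h$), in which case the bodies $K_f,K_g$ are finite unions of right cylinders with closed graph-capped tops and their $L_p$-combination is an analytic image of a compact set, hence measurable. Passing to the limit then recovers the general bounded integrable case. A secondary issue is keeping track of the supports when $\text{supp}(f)$ or $\text{supp}(g)$ meets the hyperplane $\R^n\times\{0\}$, which I would absorb by allowing $(x,y)\in K_\phi$ with $y=0$ whenever $\phi(x)=0$, consistent with the convention that $h_{p,t,s}(z)=0$ outside $(1-t)\cdot_p\text{supp}(f)+_p t\cdot_p\text{supp}(g)$.
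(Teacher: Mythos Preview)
Your proposal is correct and follows essentially the same route as the paper: associate to $f,g,h$ their bodies of revolution in $\R^{n+s}$, prove the inclusion $(1-t)\cdot_p K_f+_p t\cdot_p K_g\subset K_h$ via the triangle inequality in $\R^s$ combined with hypothesis \eqref{e:integerLpBBLassumption0}, apply the $L_p$-BMI \eqref{e:LYZBM} in dimension $n+s$, and cancel the common factor $|B_2^s|_s$ after the Fubini computation. The paper inserts the intermediate object $A_s(h_{p,t,s})$ between the $L_p$-combination and $A_s(h)$ (your $K_h$), but this is cosmetic; your careful treatment of the measurability of the $L_p$-sum via simple-function approximation actually goes beyond what the paper justifies.
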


Fix $s \in \N$. Given any bounded, integrable function $w \colon \R^n \to \R_+$, consider the $n+s$ dimensional body of revolution
\begin{equation}\label{e:integerbodyofrev}
A_s(w):= \left\{(x,y) \in \R^n \times \R^s \colon x \in \text{supp}(w), \|y\|\leq w(x)^{\frac{1}{s}}\right\}.
\end{equation}
We remark that $A_s(w)$ is a convex body if and only if $w^{\frac{1}{s}}$  is a concave function on its support. Additionally, using Fubini's theorem, we see that 
\[
|A_s(w)|_{n+s} = |B_2^s|_s \int_{\R^n} w(x)dx,
\]
where $B_2^s$ denotes the closed Euclidean unit ball in $\R^s$. 

In the following lemma, we present the set inclusion relation between the $n+s$ dimensional body of revolution and the $L_{p,s}$--supremal convolution for functions.
\begin{lemma}\label{t:setinclusion1} Let $p\geq 1$, $s \in \N$, and $t \in [0,1]$.  Suppose that $f,g,h\colon \R^n \to \R_+$ are bounded integrable functions that satisfy the condition \eqref{e:integerLpBBLassumption0} of Proposition~\ref{t:integerlpbbl0}. Then
\begin{equation}\label{e:setone}
A_p:=(1-t) \cdot_p A_s(f) +_p t \cdot_p A_s(g) \subset A_s(h_{p,t,s}) \subset A_s(h),
\end{equation}
where $h_{p,t,s}$ is the $L_{p,s}$--supremal convolution introduced in equation \eqref{e:newsupcolvolution}. 
\end{lemma}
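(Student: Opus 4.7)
The plan is to verify both inclusions pointwise by unfolding the definitions, with the key computation being a triangle inequality applied to the $\R^s$-component of the body of revolution. For the first inclusion I would take an arbitrary element $(z,w)\in A_p$ and use the very parameter $\lambda$ arising in its $L_p$-Minkowski decomposition to evaluate $h_{p,t,s}(z)$; for the second inclusion I would observe that the hypothesis \eqref{e:integerLpBBLassumption0} is exactly the statement that $h$ dominates $h_{p,t,s}$ pointwise.

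Concretely, for the first inclusion, suppose $(z,w)\in (1-t)\cdot_p A_s(f)+_p t\cdot_p A_s(g)$. By the definition \eqref{e:LYZMinkowskicombo} of the $L_p$-Minkowski combination, there exist $(x_1,y_1)\in A_s(f)$, $(x_2,y_2)\in A_s(g)$, and $\lambda\in[0,1]$ with
\[
(z,w)=(1-t)^{1/p}(1-\lambda)^{(p-1)/p}(x_1,y_1)+t^{1/p}\lambda^{(p-1)/p}(x_2,y_2).
\]
Projecting to $\R^n$ places $z$ in $(1-t)\cdot_p\text{supp}(f)+_p t\cdot_p\text{supp}(g)$, which is precisely the support of $h_{p,t,s}$. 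Projecting to $\R^s$ and applying the triangle inequality, followed by the constraints $\|y_1\|\leq f(x_1)^{1/s}$ and $\|y_2\|\leq g(x_2)^{1/s}$ from the defining condition \eqref{e:integerbodyofrev} of $A_s$, yields
\[
\|w\|\leq (1-t)^{1/p}(1-\lambda)^{(p-1)/p} f(x_1)^{1/s}+t^{1/p}\lambda^{(p-1)/p} g(x_2)^{1/s}.
\]
Since the right hand side is one of the quantities appearing in the supremum that defines $h_{p,t,s}(z)^{1/s}$, I would conclude $\|w\|\leq h_{p,t,s}(z)^{1/s}$, placing $(z,w)$ in $A_s(h_{p,t,s})$.

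For the second inclusion, I would invoke the hypothesis \eqref{e:integerLpBBLassumption0}, which asserts precisely that $h$ bounds above every quantity appearing inside the supremum defining $h_{p,t,s}$. Taking the supremum on the right produces $h(z)\geq h_{p,t,s}(z)$ for every $z$, and in particular $\text{supp}(h_{p,t,s})\subset \text{supp}(h)$. Since $w\mapsto A_s(w)$ is manifestly monotone with respect to the pointwise order on nonnegative functions, the inclusion $A_s(h_{p,t,s})\subset A_s(h)$ then follows at once.

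No step poses a genuine obstacle: the lemma is essentially a tautological matching of definitions, engineered so that the $L_p$-Minkowski combination of sets and the $L_{p,s}$-supremal convolution of functions align coordinate-by-coordinate on the body of revolution. The only bookkeeping point to keep straight is that the parameter $\lambda$ witnessing membership in $A_p$ must be the same $\lambda$ used to witness membership in $A_s(h_{p,t,s})$, which is exactly why the supremum in \eqref{e:newsupcolvolution} is itself taken over $\lambda\in[0,1]$.
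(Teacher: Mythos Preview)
Your proposal is correct and follows essentially the same route as the paper's proof: both establish the first inclusion by decomposing an arbitrary point of $A_p$ via \eqref{e:LYZMinkowskicombo}, applying the triangle inequality to the $\R^s$-component, and recognizing the resulting bound as one of the candidates in the supremum defining $h_{p,t,s}$; both dispatch the second inclusion by noting that \eqref{e:integerLpBBLassumption0} is exactly the pointwise bound $h\ge h_{p,t,s}$.
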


\begin{proof}
The second inclusion appearing in \eqref{e:setone} follows immediately as $h \geq h_{p,t,s}$ by assumption \eqref{e:integerLpBBLassumption0}.  Therefore, we need only to establish the first inclusion.  Let $z \in A_p$.  Then $z$ is of the form 
\[
z = (1-t)^{1/p}(1-\lambda)^{(p-1)/p}z_1 + t^{1/p}\lambda^{(p-1)/p}z_2
\]
for some $z_1 \in A_s(f)$, $z_2 \in A_s(g)$ and $\lambda \in [0,1]$. 
Write $z_1 = (x_1,y_1)$ with $x_1 \in \text{supp}(f)$ and $y_1 \in \R^s$ satisfying $\|y_1\| \leq f(x)^{1/s}$. Similarly, $z_2 = (x_2,y_2)$ with $x_2 \in \text{supp}(g)$ and $y_2 \in \R^s$ and $\|y_2\| \leq g(x_2)^{1/s}$. Then, we have 
\[
z=((1-t)^{1/p}(1-\lambda)^{(p-1)/p}x_1+t^{1/p}\lambda^{(p-1)/p}x_2,(1-t)^{1/p}(1-\lambda)^{(p-1)/p}y_1+t^{1/p}\lambda^{(p-1)/p}y_2)= :(\bar{z},\tilde{z})
\]
where $\bar{z}=(1-t)^{1/p}(1-\lambda)^{(p-1)/p}x_1+t^{1/p}\lambda^{(p-1)/p}x_2$ belongs to $(1-t) \cdot_p \text{supp}(f) +_p t \cdot_p \text{supp}(g)$, and with
\begin{align*}
\|\tilde{z}\|&= \|(1-t)^{1/p}(1-\lambda)^{(p-1)/p}y_1+t^{1/p}\lambda^{(p-1)/p}y_2\| \\
 &\leq (1-t)^{1/p}(1-\lambda)^{(p-1)/p} \|y_1\| + t^{1/p}\lambda^{(p-1)/p}\|y_2\|\\
&\leq (1-t)^{1/p}(1-\lambda)^{(p-1)/p} f(x_1)^{1/s} + t^{1/p}\lambda^{(p-1)/p}g(x_2)^{1/s}\\
&\leq h_{p,t,s}((1-t)^{1/p}(1-\lambda)^{(p-1)/p}x_1+t^{1/p}\lambda^{(p-1)/p}x_2)^{1/s} \\
&= h_{p,t,s}(\bar{z})^{1/s},
\end{align*}
which yields the inclusion; i.e., for any $z\in\R^n$, $z\in A_s(h_{p,t,s})$ as long as $z\in A_p$, as desired.
\end{proof}
By the lemmas given above, we obtain the $L_p$-BBL inequality for functions when $s\in\N$ as follows.

\emph{Proof of Proposition \ref{t:integerlpbbl0}.} Let $f,g,h$ satisfy the condition \eqref{e:integerLpBBLassumption0}. Then, combining the set inclusions \eqref{e:setone} appearing in Lemma~\ref{t:setinclusion1} with the $L_p$-BMI \eqref{e:LYZBM}, we conclude that 
\begin{equation}\label{e:one}
\begin{split}
|A_s(h)|_{n+s} &\geq |A_s(h_{p,t,s})|_{n+s} \geq M_{\frac{p}{n+s}}^t(|A_s(f)|_{n+s},|A_s(g)|_{n+s}).
\end{split}
\end{equation}
Finally, combining the following equalities
\[
|A_s(h)|_{n+s} = |B_2^s|_s \int_{\R^n} h, \quad |A_s(h_{p,t,s})|_{n+s}= |B_2^s|_s \int_{\R^n} h_{p,t,s}, 
\]
with
\[
|A_s(f)|_{n+s} = |B_2^s|_s \int_{\R^n} f, \quad |A_s(g))|_{n+s}= |B_2^s|_s \int_{\R^n} g,
\]
 the inequality \eqref{e:one} directly yields inequality \eqref{e:integerLPBBL}, as desired.

The next goal is to establish $L_p$-BBL inequality for functions in the case when $s = \ell/m$ is a positive rational number in lowest terms for $\ell,m \in \N$. Given any integrable function $w \colon \R^n \to \R_+$ and positive integer $m$, consider the multiple function $\tilde{w} \colon (\R^n)^m \to \R_+$ defined as the product of $m$-independent copies of $w$, i.e., 
\[
\tilde{w}(x) = \tilde{w}(x_1,\dots,x_m) := \prod_{i=1}^m w(x_i).
\]
Using the independent structure of the multiple function $\tilde{w}$, Fubini's theorem yields 
\begin{equation}\label{e:integralvolumeequality}
\int_{(\R^n)^m} \tilde{w}(x)dx = \left(\int_{\R^n} w(x) dx\right)^m.
\end{equation}
Moreover, we see that 
\[
\text{supp}(\tilde{w}) = \text{supp}(w)\times \cdots \times \text{supp}(w)=:\text{supp}(w)^m,
\]
where, for a subset $A$ of $\R^n$,  $A^m = A \times \cdots \times A \subset (\R^n)^m$ denotes the Cartesian product of $m$ copies of $A$.

We require a few additional lemmas before returning to the proof of $L_p$-Borell-Brascamp-Lieb inequality for functions (Theorem~\ref{t:LPBBL}), the first of which establishes a relation between the $L_p$-Minkowski convex combination and Cartesian product as follows. 

\begin{lemma}\label{t:cartesianproducts} Let $p \in[1,\infty)$, $m \in \N$, and $\alpha, \beta >0$. Then, for any Borel sets $A,B \subset \R^n$, one has 
\[
[\alpha\cdot_pA +_p \beta \cdot_p B)]^m \supset \alpha \cdot_p A^m +_p \beta \cdot_p B^m. 
\]

\end{lemma}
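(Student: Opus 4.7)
The plan is to unwind the definition of the $L_p$-Minkowski combination \eqref{e:LYZMinkowskicombo} coordinate-by-coordinate and observe that the parameter $\lambda$ can be shared across all $m$ copies. Specifically, I would start by picking an arbitrary point
\[
z \in \alpha \cdot_p A^m +_p \beta \cdot_p B^m \subset (\R^n)^m,
\]
and by definition there exist $\mathbf{x} = (x_1,\dots,x_m) \in A^m$, $\mathbf{y} = (y_1,\dots,y_m) \in B^m$, and a single scalar $\lambda \in [0,1]$ such that
\[
z = \alpha^{1/p}(1-\lambda)^{(p-1)/p}\mathbf{x} + \beta^{1/p}\lambda^{(p-1)/p}\mathbf{y}.
\]

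Next, I would read off the coordinates. Writing $z = (z_1,\dots,z_m)$, each block satisfies
\[
z_i = \alpha^{1/p}(1-\lambda)^{(p-1)/p}x_i + \beta^{1/p}\lambda^{(p-1)/p}y_i,
\]
with the same $\lambda$ used for every $i$. By the definition \eqref{e:LYZMinkowskicombo} applied in $\R^n$, each $z_i$ belongs to $\alpha \cdot_p A +_p \beta \cdot_p B$. Hence $z \in [\alpha \cdot_p A +_p \beta \cdot_p B]^m$, which is the desired inclusion.

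There is no real obstacle here; the lemma is a direct consequence of the fact that $L_p$-Minkowski combinations of sets are defined by taking the union over a single scalar parameter $\lambda$, and this single $\lambda$ is strong enough to place each block $z_i$ simultaneously into the combination $\alpha \cdot_p A +_p \beta \cdot_p B$. The reason the reverse inclusion fails in general is precisely that an element of $[\alpha \cdot_p A +_p \beta \cdot_p B]^m$ would permit a different $\lambda_i$ in each block, and these need not collapse to a common value unless $p=1$, in which case the identity is classical.
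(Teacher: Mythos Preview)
Your proposal is correct and follows essentially the same approach as the paper: both arguments unwind the definition \eqref{e:LYZMinkowskicombo}, observe that a point of $\alpha \cdot_p A^m +_p \beta \cdot_p B^m$ uses a single parameter $\lambda$ common to all $m$ blocks, and then read off coordinatewise that each block lies in $\alpha \cdot_p A +_p \beta \cdot_p B$. Your additional remark on why the reverse inclusion fails is not in the paper but is accurate.
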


\begin{proof}For each $\lambda \in [0,1]$, set $\alpha_{\lambda,p}:= \alpha^{1/p}(1-\lambda)^{(p-1)/p}$ and $\beta_{\lambda, p}:= \beta^{1/p}\lambda^{(p-1)/p}$.  Observe by the definition of $L_p$-Minkowski convex combination (\ref{e:LYZMinkowskicombo}) of the Cartesian product that
\begin{eqnarray*}
	&&\alpha \cdot_p A^m +_p \beta \cdot_p B^m \\
	&=& \{\alpha_{\lambda,p}\bar{x}+\beta_{\lambda,p} \bar{y} \colon \bar{x}\in A^m, \bar{y}\in B^m, 0 \leq \lambda \leq 1\}\\
	&=& \{\alpha_{\lambda,p}(x_1,\dots, x_m) + \beta_{\lambda,p}(y_1,\dots, y_m) \colon x_i \in A, y_i \in B, \text{ for all } 1=1,\dots, m, 0 \leq \lambda \ \leq1 \}\\
	&=& \{(\alpha_{\lambda,p} x_1 + \beta_{\lambda,p} y_1, \dots, \alpha_{\lambda,p} x_m+ \beta_{\lambda,p} x_m) \colon x_i \in A, y_i \in B, \text{ for all } 1=1,\dots, m, 0 \leq \lambda \leq 1\}\\
	&\subset&[\alpha\cdot_pA +_p \beta \cdot_p B)]^m,
\end{eqnarray*}
as desired.
\end{proof}

Our next key lemmas concerns, in some sense, the concavity conditions of the multiple function $\tilde{w}$ defined above.  Before we state the lemma, we will need the following version of H\"older's inequality (cf. \cite{Hardy}). 

\begin{lemma}\label{t:Holder}
Given $m \in \N$ and sequences of real numbers $\{a_1,\dots,a_m\}$ and $\{b_1,\dots,b_m\}$, one has 
\[
\left|\prod_{i=1}^m a_i \right| + \left| \prod_{i=1}^m b_i \right| \leq \left[\prod_{i=1}^m (|a_i|^m + |b_i|^m) \right]^{1/m}.
\]
\end{lemma}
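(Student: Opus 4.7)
The plan is to reduce the statement to the nonnegative case and then invoke the generalized (multi-factor) H\"older inequality. First I would set $c_i := |a_i|$ and $d_i := |b_i|$, so that the claim becomes
\[
\prod_{i=1}^m c_i + \prod_{i=1}^m d_i \leq \left[\prod_{i=1}^m \bigl(c_i^m + d_i^m\bigr)\right]^{1/m},
\]
with $c_i, d_i \geq 0$. The presence of an $m$-th root on the right and of $m$-th powers inside the product strongly suggests using H\"older with $m$ equal conjugate exponents $p_1 = \cdots = p_m = m$, whose reciprocals satisfy $\sum_i 1/p_i = 1$.

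Next I would apply the generalized H\"older inequality to the finite sum indexed by $j \in \{1, 2\}$, regarded as an integral over a two-point space, in the form
\[
\sum_{j=1}^{2} \prod_{i=1}^{m} x_{i,j} \leq \prod_{i=1}^{m}\left( \sum_{j=1}^{2} x_{i,j}^{m} \right)^{1/m},
\]
with the choice $x_{i,1} := c_i$ and $x_{i,2} := d_i$. The left-hand side is exactly $\prod_i c_i + \prod_i d_i$, and the right-hand side is exactly $\left[\prod_i (c_i^m + d_i^m)\right]^{1/m}$, yielding the desired inequality.

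As an alternative route (useful if one prefers a self-contained argument), I would argue by induction on $m$. The base case $m=1$ is a trivial equality. The case $m=2$ is the Cauchy--Schwarz inequality applied to the vectors $(c_1,d_1)$ and $(c_2,d_2)$ in $\R^2$, giving $c_1 c_2 + d_1 d_2 \leq \sqrt{(c_1^2+d_1^2)(c_2^2+d_2^2)}$. For the inductive step one would peel off the last factor using the standard two-exponent H\"older with conjugate pair $(m/(m-1), m)$ applied to the pair of vectors $\bigl(\prod_{i<m} c_i,\ \prod_{i<m} d_i\bigr)$ and $(c_m, d_m)$ of $\R^2$, combined with the induction hypothesis applied to the first $m-1$ indices.

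I do not anticipate any serious obstacle here: this is a clean reformulation of the classical multilinear H\"older inequality, and both routes above are short. The only point requiring minor care is bookkeeping of the passage from the original possibly-signed $a_i, b_i$ to their absolute values, which is handled at the outset by the triangle inequality $|\prod a_i| = \prod |a_i|$.
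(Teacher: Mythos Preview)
Your proposal is correct. The paper does not actually prove this lemma; it merely states it with a citation to Hardy--Littlewood--P\'olya's \emph{Inequalities}, so there is no argument in the paper to compare against. Both of your routes are standard and valid: the first is precisely the generalized H\"older inequality on a two-point space with exponents $p_1=\cdots=p_m=m$, and the inductive alternative also works (for the step, after applying two-exponent H\"older with $(m/(m-1),m)$ you substitute $\tilde c_i=c_i^{m/(m-1)}$, $\tilde d_i=d_i^{m/(m-1)}$ to match the induction hypothesis at level $m-1$).
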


As a result of the previous lemma, we obtain the following inequality of the $L_{p,s}$--supremal convolution of Cartesian product for functions.

\begin{corollary}\label{t:coro} Let $t,\lambda \in [0,1]$, $p \in[1,\infty)$, and $s = \ell/m$, with $\ell,m \in \N$ in lowest terms. Given $u,w \colon \R^n \to \R_+$, and $(x_1,\dots,x_m),(y_1,\dots,y_m)\in(\R^n)^m$, one has 
\begin{align*}
\prod_{i=1}^m \left[(1-t)^{1/p}(1-\lambda)^{(p-1)/p} u(x_i)^{1/s} + t^{1/p}\lambda^{(p-1)/p} w(y_i)^{1/s} \right]^{1/m}\\
\geq  (1-t)^{1/p}(1-\lambda)^{(p-1)/p} \prod_{i=1}^m u(x_i)^{1/\ell} + t^{1/p}\lambda^{(p-1)/p} \prod_{i=1}^m w(y_i)^{1/\ell}.
\end{align*}

\end{corollary}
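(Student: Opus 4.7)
The plan is to reduce this inequality to a direct application of Lemma~\ref{t:Holder}. The key bookkeeping observation is that $s = \ell/m$, so $1/s = m/\ell$, which means each factor on the left-hand side of the desired inequality looks like a sum of $m$-th powers. This suggests looking for sequences $\{a_i\}_{i=1}^m$ and $\{b_i\}_{i=1}^m$ whose $m$-th powers reconstruct the bracketed expression.

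Concretely, I would set
\[
a_i = (1-t)^{\frac{1}{mp}}(1-\lambda)^{\frac{p-1}{mp}}\, u(x_i)^{\frac{1}{\ell}}, \qquad b_i = t^{\frac{1}{mp}}\lambda^{\frac{p-1}{mp}}\, w(y_i)^{\frac{1}{\ell}}.
\]
Then a direct exponent check gives
\[
a_i^m = (1-t)^{\frac{1}{p}}(1-\lambda)^{\frac{p-1}{p}}\, u(x_i)^{\frac{m}{\ell}} = (1-t)^{\frac{1}{p}}(1-\lambda)^{\frac{p-1}{p}}\, u(x_i)^{\frac{1}{s}},
\]
and similarly $b_i^m = t^{1/p}\lambda^{(p-1)/p}\, w(y_i)^{1/s}$. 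Hence the left-hand side of the claimed inequality is exactly $\prod_{i=1}^m (a_i^m + b_i^m)^{1/m}$.

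Next I would compute the two products. Since the prefactors pull out of the product as an $m$-th power,
\[
\prod_{i=1}^m a_i = (1-t)^{\frac{1}{p}}(1-\lambda)^{\frac{p-1}{p}} \prod_{i=1}^m u(x_i)^{\frac{1}{\ell}}, \qquad \prod_{i=1}^m b_i = t^{\frac{1}{p}}\lambda^{\frac{p-1}{p}} \prod_{i=1}^m w(y_i)^{\frac{1}{\ell}},
\]
so $\prod_{i=1}^m a_i + \prod_{i=1}^m b_i$ coincides precisely with the right-hand side of the claimed inequality.

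The conclusion then follows at once by applying Lemma~\ref{t:Holder} to the nonnegative sequences $\{a_i\}$ and $\{b_i\}$, which asserts
\[
\prod_{i=1}^m a_i + \prod_{i=1}^m b_i \leq \left[\prod_{i=1}^m (a_i^m + b_i^m)\right]^{1/m}.
\]
There is no real obstacle here beyond careful exponent tracking; the main substance is already encoded in Lemma~\ref{t:Holder}. The reason the statement is worth recording as a corollary is that the particular exponent split $1/s = m/\ell$ is exactly what will be needed to translate the rational-$s$ case of the $L_p$-Borell--Brascamp--Lieb inequality into the integer-$s$ case (Proposition~\ref{t:integerlpbbl0}) applied to the multiple functions $\tilde u$ and $\tilde w$, via the Cartesian product inclusion of Lemma~\ref{t:cartesianproducts}.
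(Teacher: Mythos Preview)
Your proposal is correct and is essentially identical to the paper's own proof: both apply Lemma~\ref{t:Holder} with $a_i = [(1-t)^{1/p}(1-\lambda)^{(p-1)/p}]^{1/m} u(x_i)^{1/\ell}$ and $b_i = [t^{1/p}\lambda^{(p-1)/p}]^{1/m} w(y_i)^{1/\ell}$, using $1/s = m/\ell$ to match exponents. Your write-up is more detailed, but the argument is the same.
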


\begin{proof}
The results follows immediately by taking the sequences $(a_1,\dots,a_m)$ and $(b_1,\dots,b_m)$, with 
\[
a_i = [(1-t)^{1/p}(1-\lambda)^{(p-1)/p}]^{1/m} u(x_i)^{1/\ell}, \quad b_i = [t^{1/p}\lambda^{(p-1)/p}]^{1/m} w(y_i)^{1/\ell}, \quad i=1,\dots,m
\]
in Lemma~\ref{t:Holder}.
\end{proof}

Set $s = \ell/m$, with $\ell,m \in \N$ in lowest terms. Next, we consider bodies of revolution akin to those appearing in equation \eqref{e:integerbodyofrev}. Given any bounded integrable function $w \colon \R^n \to \R_+$ that is not identically zero, define
\begin{eqnarray}\label{e:bodyofrev2}
B_s(w) \nonumber&:=& A_s(\tilde{w}) =\{(x,y) \in (\R^n)^m\times\R^{\ell} \colon x\in \text{supp}(\tilde{w}), \|y\| \leq \tilde{w}(x)^{1/\ell}\}\\
&=&\left\{(x_1,\dots,x_m,y) \in (\R^n)^m \times \R^{\ell} \colon x_i \in \text{supp}(w), i=1,\dots,m, \|y\| \leq \prod_{i=1}^m w(x_i)^{1/\ell}\right\}.
\end{eqnarray}
Notice that $B_s(w)$ is a $nm+l$ dimensional convex body if and only if the function $\tilde{w}$ is $(1/\ell)$-concave on its support. Moreover, we have 
\begin{equation}\label{e:integralequality2}
|B_s(w)|_{nm+\ell} = |B_2^{\ell}|_{\ell}\left(\int_{\R^n} w(x)dx\right)^m.
\end{equation}
Similar to Lemma \ref{t:setinclusion1}, we obtain the following set inclusion relation of the $nm+l$ dimensional body of revolution with respect to the $L_{p,s}$--supremal convolution for functions.
\begin{lemma}\label{t:anotherinclusion} Let $p \in[1,\infty)$, $s=\ell/m$, with $\ell,m \in \N$, and $t\in [0,1]$.  Let $f,g,h \colon \R^n \to \R_+$ be a triple satisfying the inequality (\ref{e:LpBBLassumption0}), and let $h_{p,t,s}$ denote the $L_{p,s}$--supremal convolution of $f$ and $g$.  Then
\begin{equation}\label{e:anothersetinclusion}
B_p :=(1-t) \cdot_p B_s(f) +_p t \cdot_p B_s(g)\subset B_s(h_{p,t,s}) \subset B_s(h). 
\end{equation}

\end{lemma}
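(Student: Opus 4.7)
\medskip
\noindent\textbf{Proof plan.} The plan is to mimic the structure of the proof of Lemma~\ref{t:setinclusion1}, but now keeping careful track of the $m$ copies of $\R^n$ that appear in the definition of $B_s(w)$, and bridging between a sum of products and a product of sums via the Hölder-type inequality in Corollary~\ref{t:coro}. The second inclusion $B_s(h_{p,t,s}) \subset B_s(h)$ is immediate from the hypothesis $h \geq h_{p,t,s}$ and the monotonicity of $w \mapsto B_s(w)$, so all the work goes into the first inclusion.

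For the first inclusion, I would pick an arbitrary $z \in B_p$ and, by definition of the $L_p$-Minkowski combination \eqref{e:LYZMinkowskicombo}, write
\[
z = (1-t)^{1/p}(1-\lambda)^{(p-1)/p} z_1 + t^{1/p}\lambda^{(p-1)/p} z_2
\]
for some $\lambda\in[0,1]$, $z_1 \in B_s(f)$, and $z_2 \in B_s(g)$. Using the explicit form of $B_s$ in \eqref{e:bodyofrev2}, I would decompose
$z_1 = (x_1^{(1)},\dots,x_m^{(1)},y_1)$ with $x_i^{(1)} \in \mathrm{supp}(f)$ and $\|y_1\|\leq \prod_{i=1}^m f(x_i^{(1)})^{1/\ell}$, and likewise $z_2 = (x_1^{(2)},\dots,x_m^{(2)},y_2)$ with $x_i^{(2)} \in \mathrm{supp}(g)$ and $\|y_2\|\leq \prod_{i=1}^m g(x_i^{(2)})^{1/\ell}$. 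This produces coordinates
\[
\bar{x}_i := (1-t)^{1/p}(1-\lambda)^{(p-1)/p}\, x_i^{(1)} + t^{1/p}\lambda^{(p-1)/p}\, x_i^{(2)} \in (1-t)\cdot_p \mathrm{supp}(f) +_p t\cdot_p \mathrm{supp}(g)
\]
for each $i=1,\dots,m$, together with a last coordinate $\tilde{y}$ lying in $\R^\ell$.

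The heart of the argument is the estimate on $\|\tilde{y}\|$. By the triangle inequality,
\[
\|\tilde{y}\| \leq (1-t)^{1/p}(1-\lambda)^{(p-1)/p} \prod_{i=1}^m f(x_i^{(1)})^{1/\ell} + t^{1/p}\lambda^{(p-1)/p} \prod_{i=1}^m g(x_i^{(2)})^{1/\ell}.
\]
Here is where Corollary~\ref{t:coro} enters: it converts the right-hand side (a sum of two $m$-fold products) into a product of $m$ factors, each of the form $[(1-t)^{1/p}(1-\lambda)^{(p-1)/p} f(x_i^{(1)})^{1/s} + t^{1/p}\lambda^{(p-1)/p} g(x_i^{(2)})^{1/s}]^{1/m}$. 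By the very definition of $h_{p,t,s}$ in \eqref{e:newsupcolvolution} applied at the point $\bar{x}_i$ with the decomposition already in hand, each such bracket is bounded by $h_{p,t,s}(\bar{x}_i)^{1/s}$, and using $1/(ms) = 1/\ell$ the full product collapses to $\prod_{i=1}^m h_{p,t,s}(\bar{x}_i)^{1/\ell}$. This is exactly the condition required to place $(\bar{x}_1,\dots,\bar{x}_m,\tilde{y})$ in $B_s(h_{p,t,s})$, proving the inclusion.

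The main obstacle is bookkeeping: matching the product structure coming from $m$ independent copies in $B_s$ with the additive structure of the $L_p$-Minkowski combination, and choosing the Hölder-type exponents so the cancellation $1/(ms) = 1/\ell$ works out. Once Corollary~\ref{t:coro} is invoked with the right substitutions, the rest is essentially the same triangle-inequality argument as in Lemma~\ref{t:setinclusion1}.
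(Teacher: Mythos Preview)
Your proposal is correct and follows essentially the same strategy as the paper: the second inclusion is immediate from $h\ge h_{p,t,s}$, and the first inclusion is obtained by writing a generic point of $B_p$ in coordinates, applying the triangle inequality in the $\R^\ell$ factor, and then invoking Corollary~\ref{t:coro} to convert the sum of $m$-fold products into a product of sums bounded by $\prod_i h_{p,t,s}(\bar{x}_i)^{1/\ell}$. The only difference is presentational: the paper first recasts $B_p$ as a body of revolution over $(\R^n)^m$ and routes through Lemma~\ref{t:cartesianproducts} to handle the supports, whereas you work pointwise in the style of Lemma~\ref{t:setinclusion1} and verify $\bar{x}_i\in(1-t)\cdot_p\mathrm{supp}(f)+_p t\cdot_p\mathrm{supp}(g)$ directly---this is slightly more streamlined but mathematically equivalent.
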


\begin{proof}
The second inclusion appear in \eqref{e:anothersetinclusion} follows immediately since $h \geq h_{p,t,s}$. Therefore, it is enough to show that $B_p \subset B_s(h_{p,t,s})$. 

We begin by remarking that $B_p$ is the set of all $(x,y)\in (\R^n)^m \times \R^{\ell}$ such that 
\[
x \in (1-t)\cdot_p\text{supp}(\tilde{f}) +_p t \cdot_p \text{supp}(\tilde{g})
\]
and 
\begin{equation}\label{e:multipledimensional1}
\|y\| \leq \sup\left\{ (1-t)^{1/p}(1-\lambda)^{(p-1)/p}\tilde{f}(x_1) + t^{1/p}\lambda^{(p-1)/p}\tilde{g}(x_2)\right\}^{1/\ell},
\end{equation}
where the supremum is taken twice: once over all ways to write $x=(1-t)^{1/p}(1-\lambda)^{(p-1)/p}x_1 + t^{1/p}\lambda^{(p-1)/p}x_2$ and once over all $0 \leq \lambda \leq 1$. That is, $B_p = A_{\ell}(\widetilde{h_{p,t,s}})$. 

Now, using definition \eqref{e:bodyofrev2} together with Lemma~\ref{t:cartesianproducts}, we see that 
\begin{eqnarray}\label{e:multipledimensional2}
&&\!\!\!\!\!\! \!\!\!\!\!\!\!\!\!\!\!\!  B_s(h_{p,t,s}) \nonumber\\&=&\nonumber
\left\{(z,y) \in (\R^n)^m\times \R^{\ell} \colon z \in \text{supp}(\widetilde{h_{p,t,s}}), \|y\| \leq \widetilde{h_{p,t,s}}(z)^{1/\ell}\right\}\\
&=&\left\{(z,y) \in (\R^n)^m\times \R^{\ell} \colon  z \in [(1-t) \cdot_p \text{supp}(f) +_p t \cdot_p \text{supp}(g)]^m, \|y\| \leq \widetilde{h_{p,t,s}}(z)^{1/\ell}  \right\}\label{e:multipledimensional2}\\
&\supset&\nonumber\left\{(z,y) \in (\R^n)^m\times \R^{\ell} \colon  z \in (1-t) \cdot_p \text{supp}(\tilde{f}) +_p t \cdot_p \text{supp}(\tilde{g}), \|y\| \leq \widetilde{h_{p,t,s}}(z)^{1/\ell} \right\}.
\end{eqnarray}
Therefore, to complete the proof, it is enough to compare the last set inclusion relation appearing in \eqref{e:multipledimensional2} with the condition appearing in \eqref{e:multipledimensional1}. For every $z \in (1-t) \cdot_p \text{supp}(\tilde{f}) +_p t \cdot_p \text{supp}(\tilde{g})$ consider 
\begin{eqnarray*}
	&&\!\!\!\!\!\! \!\!\!\!\!\! \!\!\!\!\!\! \sup \{(1-t)^{1/p}(1-\lambda)^{(p-1)/p} \tilde{f}(x)^{1/\ell} + t^{1/p}\lambda^{(p-1)/p}\tilde{g}(y)^{1/\ell}\}\\
	&=& \sup \left\{(1-t)^{1/p}(1-\lambda)^{(p-1)/p} \prod_{i=1}^m f(x_i)^{1/\ell} +t^{1/p}\lambda^{(p-1)/p} \prod_{i=1}^m g(y_i)^{1/\ell}  \right\},
\end{eqnarray*}
where the supremum is taken twice: once over all ways to write 
\[
z = (1-t)^{1/p}(1-\lambda)^{(p-1)/p}x+t^{1/p}\lambda^{(p-1)/p}y,
\]
with $x \in \text{supp}(\tilde{f})$ and $y\in \text{supp}(\tilde{g})$ and once over all $0 \leq \lambda \leq 1$.  With this, Corollary~\ref{t:coro} implies that 
\begin{eqnarray*}
	&&\!\!\!\!\!\! \!\!\!\!\!\! \!\!\!\!\!\! \sup \{(1-t)^{1/p}(1-\lambda)^{(p-1)/p} \tilde{f}(x)^{1/\ell} + t^{1/p}\lambda^{(p-1)/p}\tilde{g}(y)^{1/\ell}\}\\&
	\leq&\sup\left\{\prod_{i=1}^m \left[(1-t)^{1/p}(1-\lambda)^{(p-1)/p} f(x_i)^{1/s} + t^{1/p}\lambda^{(p-1)/p} g(y_i)^{1/s} \right]^{1/m} \right\}\\
	&\leq& \prod_{i=1}^m \sup \left\{ \left[(1-t)^{1/p}(1-\lambda)^{(p-1)/p} f(x_i)^{1/s} + t^{1/p}\lambda^{(p-1)/p} g(y_i)^{1/s} \right]^{1/m}\right\}\\
	&=&\prod_{i=1}^mh_{p,t,s}((1-t)^{1/p}(1-\lambda)^{(p-1)/p}x+t^{1/p}\lambda^{(p-1)/p}y)^{1/(sm)}\\
	&=&\widetilde{h_{p,t,s}}(z)^{1/\ell}.
\end{eqnarray*}
Consequently, 
\[
\|y\| \leq \sup\left\{ (1-t)^{1/p}(1-\lambda)^{(p-1)/p}\tilde{f}(x_1) + t^{1/p}\lambda^{(p-1)/p}\tilde{g}(x_2)^{1/\ell}\right\},
\]
that is, if $(x,y) \in B_p$, then $\|y\| \leq \widetilde{h_{p,t,s}}(x)^{1/\ell}$, so that $(x,y) \in B_s(h_{p,t,s})$, as desired.
\end{proof}
\begin{theorem}	[$L_p$-BBL inequality for functions]\label{t:LPBBL}

	Let $p \in [1,\infty)$, $0 \leq s < \infty$, and $t \in [0,1]$. Let $f,g,h \colon \R^n \to \R_+$ be a triple of bounded integrable functions. Suppose, in addition, that this triple satisfies the condition
	\begin{equation}\label{e:LpBBLassumption}
	h\left((1-t)^{\frac{1}{p}}(1-\lambda)^{\frac{p-1}{p}}x + t^{\frac{1}{p}}\lambda^{\frac{p-1}{p}}y\right) \geq \left[(1-t)^{\frac{1}{p}}(1-\lambda)^{\frac{p-1}{p}} f(x)^{\frac{1}{s}} + t^{\frac{1}{p}}\lambda^{\frac{p-1}{p}}g(y)^{\frac{1}{s}}\right]^s
	\end{equation}
	for every $x \in \text{supp}(f)$, $y\in \text{supp}(g)$ and every $\lambda \in [0,1]$. Then the following integral inequality holds:
	\begin{equation}\label{e:LPBBLinequality2}
	\int_{\R^n}h(x)dx \geq M_{\frac{p}{n+s}}^t\left(\int_{\R^n}f(x)dx,\int_{\R^n}g(x)dx \right).
	\end{equation}
\end{theorem}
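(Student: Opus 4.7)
The plan is to reduce Theorem~\ref{t:LPBBL} to the integer case already proved in Proposition~\ref{t:integerlpbbl0} in two stages: first from integer $s$ to positive rational $s$ via the $(nm+\ell)$-dimensional bodies of revolution $B_s(\cdot)$, and then from rational to arbitrary real $s\in[0,\infty)$ by a monotone approximation using the $L_{p,s}$-supremal convolution.

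For the rational case $s=\ell/m$ with $\ell,m\in\N$ in lowest terms, the key input is Lemma~\ref{t:anotherinclusion}, which provides the set inclusion
\[
(1-t)\cdot_p B_s(f) +_p t\cdot_p B_s(g) \subset B_s(h)
\]
in $\R^{nm+\ell}$. Applying the Lutwak--Yang--Zhang $L_p$-BMI \eqref{e:LYZBM} to this triple of measurable sets in $\R^{nm+\ell}$ yields
\[
|B_s(h)|_{nm+\ell}^{p/(nm+\ell)} \geq (1-t)\,|B_s(f)|_{nm+\ell}^{p/(nm+\ell)} + t\,|B_s(g)|_{nm+\ell}^{p/(nm+\ell)}.
\]
Substituting the Fubini identity $|B_s(w)|_{nm+\ell}=|B_2^\ell|_\ell(\int w\,dx)^m$ from \eqref{e:integralequality2}, using the arithmetic identity $mp/(nm+\ell)=p/(n+s)$, and dividing through by the common factor $|B_2^\ell|_\ell^{p/(nm+\ell)}$ produces \eqref{e:LPBBLinequality2} for rational $s$.

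To pass to general $s\in(0,\infty)$, I would fix a decreasing sequence of positive rationals $s_k\downarrow s$ and let $h_k:=h_{p,t,s_k}$ denote the $L_{p,s_k}$-supremal convolution of $f$ and $g$ from \eqref{e:newsupcolvolution}. By construction the triple $(f,g,h_k)$ satisfies the hypothesis \eqref{e:LpBBLassumption} with $s_k$ in place of $s$, so the rational case furnishes
\[
\int_{\R^n} h_k\,dx \,\geq\, M_{p/(n+s_k)}^t\!\Bigl(\int_{\R^n} f\,dx,\,\int_{\R^n} g\,dx\Bigr).
\]
Setting $u=1/s$, $c=(1-t)^{1/p}(1-\lambda)^{(p-1)/p}$ and $d=t^{1/p}\lambda^{(p-1)/p}$, the inequality $c+d\leq 1$ from the remark following \eqref{e:LpextensionofMinkowski} implies that the kernel $[ca^u+db^u]^{1/u}$ is non-decreasing in $u$: the normalized part $[(c/(c+d))a^u+(d/(c+d))b^u]^{1/u}$ is a classical weighted power mean, and the residual factor $(c+d)^{1/u}$ is itself non-decreasing in $u$. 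Equivalently, $[ca^{1/s}+db^{1/s}]^s$ is non-increasing in $s$, and this monotonicity transfers to the supremum, giving $h_k\uparrow h_{p,t,s}$ pointwise. The monotone convergence theorem and continuity of $\gamma\mapsto M_\gamma^t$ then yield $\int h_{p,t,s}\,dx\geq M_{p/(n+s)}^t(\int f,\int g)$, and since \eqref{e:LpBBLassumption} forces $h\geq h_{p,t,s}$ by the minimality of the $L_{p,s}$-supremal convolution, \eqref{e:LPBBLinequality2} follows. The endpoint $s=0$ is then obtained by sending $s\to 0^+$ in the conclusion, the hypothesis at $s=0$ being strictly stronger than at any $s>0$.

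The principal obstacle is verifying the monotonicity of the kernel in $s$ and the resulting monotone convergence of $h_k$ to $h_{p,t,s}$; once these are in place, the passage $h\geq h_{p,t,s}$ completes the argument without any additional work. I expect the monotonicity to reduce cleanly to the classical power-mean monotonicity combined with the sub-probability inequality $c+d\leq 1$, and the interchange $\lim_k\sup_{(\lambda,x,y)}=\sup_{(\lambda,x,y)}\lim_k$ to follow from the standard commutativity of monotone limits with the supremum.
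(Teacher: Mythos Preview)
Your proposal is correct and follows essentially the same route as the paper. The rational step via Lemma~\ref{t:anotherinclusion}, the $L_p$-BMI in $\R^{nm+\ell}$, and the Fubini identity \eqref{e:integralequality2} match the paper's argument exactly, and your passage to irrational $s$ by monotone approximation through the $L_{p,s_k}$-supremal convolutions is precisely the ``standard approximation'' the paper invokes but does not spell out; your justification of the kernel monotonicity via the sub-probability inequality $c+d\le 1$ and the power-mean inequality is the right way to make that step rigorous.
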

\begin{proof} Let $s = \ell/m$ be a positive rational number in lowest terms and consider the bodies $B_s(f), B_s(g), B_s(h)$.  By applying Lemma~\ref{t:anotherinclusion}, inequality \eqref{e:LYZBM}, and using formula \eqref{e:integralequality2} we see that 
\begin{align*}
|B_2^{\ell}|_{\ell}\left(\int_{\R^n}h(x)dx\right)^{m} &= |B_s(h)|_{nm+\ell}\\
&\geq M_{\frac{p}{nm+\ell}}\left(|B_s(f)|_{nm+\ell},|B_s(g)|_{nm+\ell} \right)\\
&=|B_2^{\ell}|_{\ell} M_{\frac{pm}{nm+\ell}}\left(\int_{\R^n} f(x) dx, \int_{\R^n} g(x)dx \right)\\
&= |B_2^{\ell}|_{\ell}M_{\frac{p}{n+s}}\left(\int_{\R^n} f(x) dx, \int_{\R^n} g(x)dx \right),
\end{align*}
which is the desired result.  The $L_p$-BBL inequality for functions of general real number $s \geq 0$ follows from standard approximation of the results for rational number sequence. 
\end{proof}

\section{$L_p$-Pr\'ekopa-Leindler type inequality for product measures with quasi-concave densities}\label{pplleindlerienquality}

In this section we prove $L_p$-Pr\'ekopa-Leindler type inequality ($L_p$-PLI) for product measures with quasi-concave densities (Theorem~\ref{t:LpPrekopaLeindler0} or \ref{t:LpPrekopaLeindler}).  The following lemma with respect to the $L_p$-BMI for measures of quasi-concave densities is needed first.

\begin{lemma}\label{t:lpBmonedimensional}
Let $t,\lambda \in [0,1]$, $p\geq 1$ and $\mu$ be a measure on $\R$ having a quasi-concave density $\phi \colon \R \to \R_+$ with maximum at origin. Then, for any measurable sets $A,B \subset \R^n$, such that $(1-t)^{\frac{1}{p}}(1-\lambda)^{\frac{p-1}{p}} A + t^{\frac{1}{p}}\lambda^{\frac{p-1}{p}}B$ is also measurable, one has 
\begin{equation}\label{e:lpBmonedimensional}
\mu\left((1-t)^{\frac{1}{p}}(1-\lambda)^{\frac{p-1}{p}} A + t^{\frac{1}{p}}\lambda^{\frac{p-1}{p}}B\right) \geq (1-t)^{\frac{1}{p}}(1-\lambda)^{\frac{p-1}{p}} \mu(A) + t^{\frac{1}{p}}\lambda^{\frac{p-1}{p}}\mu(B).
\end{equation}
In particular,
\[
\mu((1-t) \cdot_p A +_p t \cdot_p B) \geq \sup_{0 \leq \lambda \leq 1} \left[(1-t)^{\frac{1}{p}}(1-\lambda)^{\frac{p-1}{p}} \mu(A) + t^{\frac{1}{p}}\lambda^{\frac{p-1}{p}}\mu(B) \right].
\]
\end{lemma}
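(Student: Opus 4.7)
\medskip

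\noindent\textbf{Proof plan.} The plan is to prove the linear-type inequality~\eqref{e:lpBmonedimensional} by slicing through the super-level sets of $\phi$, applying the one-dimensional (linear) Brunn--Minkowski inequality on each level, and then reassembling via the layer-cake formula. First, I would record the key Hölder bound
\[
\alpha + \beta \;:=\; (1-t)^{\frac{1}{p}}(1-\lambda)^{\frac{p-1}{p}} + t^{\frac{1}{p}}\lambda^{\frac{p-1}{p}} \;\leq\; [(1-t)+t]^{\frac{1}{p}}[(1-\lambda)+\lambda]^{\frac{p-1}{p}} \;=\; 1,
\]
which already appears in the paper when verifying $h_{p,t,s}\leq 1$ on indicator functions. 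Next, since $\phi$ is quasi-concave on $\R$ with maximum at the origin, each super-level set
\[
C_r := \{x \in \R : \phi(x) \geq r\}
\]
is a (possibly unbounded) interval containing $0$, hence is \emph{star-shaped with respect to the origin}. In particular $cC_r \subseteq C_r$ for every $c \in [0,1]$.

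The main computation proceeds through the layer-cake identity
\[
\mu(E) \;=\; \int_E \phi(x)\,dx \;=\; \int_0^{\infty} |E \cap C_r|\,dr,
\]
which holds for every Borel set $E \subset \R$ by Fubini's theorem. Applied to $E = \alpha A + \beta B$ and to $E = A$, $E = B$, it reduces the claim to the pointwise (in $r$) inequality
\begin{equation*}
|(\alpha A + \beta B) \cap C_r| \;\geq\; \alpha\,|A \cap C_r| + \beta\,|B \cap C_r|.
\end{equation*}
To obtain this, I would establish the set inclusion
\[
\alpha(A \cap C_r) + \beta(B \cap C_r) \;\subseteq\; (\alpha A + \beta B) \cap C_r,
\]
which holds because the left-hand side is contained in $\alpha A + \beta B$ by construction and is also contained in $\alpha C_r + \beta C_r = (\alpha+\beta)C_r \subseteq C_r$ by convexity of $C_r$, star-shapedness at $0$, and the bound $\alpha+\beta\leq 1$. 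Then the classical one-dimensional Brunn--Minkowski inequality $|U+V|\geq |U|+|V|$ applied to $U=\alpha(A\cap C_r)$ and $V=\beta(B\cap C_r)$, together with the scaling identity $|\alpha U|=\alpha|U|$, yields the pointwise bound. Integrating in $r$ finishes the proof of~\eqref{e:lpBmonedimensional}.

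For the ``In particular'' clause, I would simply appeal to the definition~\eqref{e:LYZMinkowskicombo} of the $L_p$-Minkowski combination, which exhibits $(1-t)\cdot_p A +_p t\cdot_p B$ as the union over $\lambda \in [0,1]$ of the sets $(1-t)^{1/p}(1-\lambda)^{(p-1)/p}A + t^{1/p}\lambda^{(p-1)/p}B$, so that applying~\eqref{e:lpBmonedimensional} for every fixed $\lambda$ and then taking the supremum over $\lambda$ gives the stated bound.

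The main obstacle is the set inclusion step, because it simultaneously requires convexity of $C_r$ (from quasi-concavity), containment of the origin in $C_r$ (so that star-shapedness yields $cC_r\subseteq C_r$), and the sharp Hölder bound $\alpha+\beta\leq 1$. If any one of these three ingredients were removed, the inclusion $(\alpha+\beta)C_r\subseteq C_r$ would fail and the layer-cake argument would collapse. Once this is in place, the remainder is routine: one-dimensional Brunn--Minkowski plus Fubini.
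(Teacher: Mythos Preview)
Your proposal is correct and follows essentially the same route as the paper: both arguments establish the set inclusion $\alpha(A\cap C_r)+\beta(B\cap C_r)\subseteq(\alpha A+\beta B)\cap C_r$ using quasi-concavity of $\phi$, the maximum at the origin, and the H\"older bound $\alpha+\beta\le 1$, then finish by layer-cake and the one-dimensional Brunn--Minkowski inequality. Your set-theoretic justification $\alpha C_r+\beta C_r=(\alpha+\beta)C_r\subseteq C_r$ is a slightly cleaner packaging of the same pointwise computation the paper carries out, but the content is identical.
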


\begin{proof} For $r \in [0,1]$, denote $C_r(\phi) = \{x\in\R \colon \phi(x) \geq r \|\phi\|_{\infty}\}$.  We begin by showing the set inclusion relation as
\begin{eqnarray*}
&&\!\!\!\!\!\!\!\!\!\!\!\!\!\!\!\left[(1-t)^{\frac{1}{p}}(1-\lambda)^{\frac{p-1}{p}}A + t^{\frac{1}{p}}\lambda^{\frac{p-1}{p}}B\right] \cap C_r(\phi)\\
&\supset& (1-t)^{\frac{1}{p}}(1-\lambda)^{\frac{p-1}{p}}[A \cap C_r(\phi)]+t^{\frac{1}{p}}\lambda^{\frac{p-1}{p}}[B \cap C_r(\phi)]
\end{eqnarray*}
for any measurable sets $A$ and $B.$

Let $z \in(1-t)^{\frac{1}{p}}(1-\lambda)^{\frac{p-1}{p}}[A \cap C_r(\phi)]+t^{\frac{1}{p}}\lambda^{\frac{p-1}{p}}[B \cap C_r(\phi)]$.  Then $z$ is of the form 
\[
z =(1-t)^{\frac{1}{p}}(1-\lambda)^{\frac{p-1}{p}}x + t^{\frac{1}{p}}\lambda^{\frac{p-1}{p}}y
\]
for some $x \in A \cap C_r(\phi)$ and $y \in B \cap C_r(\phi)$. It is obvious that $z \in (1-t)^{\frac{1}{p}}(1-\lambda)^{\frac{p-1}{p}}A + t^{\frac{1}{p}}\lambda^{\frac{p-1}{p}}B$. Using the quasi-concavity of $\phi$, we see that $(1-t)^{\frac{1}{p}}(1-\lambda)^{\frac{p-1}{p}}x, t^{\frac{1}{p}}\lambda^{\frac{p-1}{p}}y \in C_r(\phi)$ if $x,y\in C_r(\phi) $. Again by the  definition of $C_r(\phi)$, one has
\begin{align*}
 \phi(z) &=\phi\Big((1-t)^{\frac{1}{p}}(1-\lambda)^{\frac{p-1}{p}}x + t^{\frac{1}{p}}\lambda^{\frac{p-1}{p}}y\Big)\\
 &=\phi\left\{(1-t)^{\frac{1}{p}}(1-\lambda)^{\frac{p-1}{p}}x+ \left[1-(1-t)^{\frac{1}{p}}(1-\lambda)^{\frac{p-1}{p}}\right]\left(\frac{t^{\frac{1}{p}}\lambda^{\frac{p-1}{p}}}{1-(1-t)^{\frac{1}{p}}(1-\lambda)^{\frac{p-1}{p}}} \cdot y\right)\right\}\\
  &\geq
 \min_{x\in A \cap C_r(\phi) , y\in B \cap C_r(\phi)}\left\{\phi(x), \phi\left(\frac{t^{\frac{1}{p}}\lambda^{\frac{p-1}{p}}}{1-(1-t)^{\frac{1}{p}}(1-\lambda)^{\frac{p-1}{p}}} \cdot y\right)\right\}\\
  &\geq
\min_{x\in A \cap C_r(\phi) , y\in B \cap C_r(\phi)}\left\{\phi(x), \phi\left( y\right)\right\}\\
 &\geq r\|\phi\|_{\infty},
\end{align*}
where we have used the fact that   
\[
\frac{t^{\frac{1}{p}}\lambda^{\frac{p-1}{p}}}{1-(1-t)^{\frac{1}{p}}(1-\lambda)^{\frac{p-1}{p}}} \leq 1
\]
from H\"older's inequality and the fact that 
$\phi$ is quasi-concave with maximum at the origin.

It follows from Fubini's theorem and the one-dimensional Brunn-Minkowski inequality that 
\begin{eqnarray*}
&&\!\!\!\!\!\!\!\!\!\mu\left((1-t)^{\frac{1}{p}}(1-\lambda)^{\frac{p-1}{p}} A + t^{\frac{1}{p}}\lambda^{\frac{p-1}{p}}B\right)\\
&\geq& \|\phi\|_{\infty} \int_0^1 \left|[(1-t)^{\frac{1}{p}}(1-\lambda)^{\frac{p-1}{p}}A + t^{\frac{1}{p}}\lambda^{\frac{p-1}{p}}B] \cap C_r(\phi)\right|_1dr\\ 
&\geq&  \|\phi\|_{\infty} \int_0^1 \left|(1-t)^{\frac{1}{p}}(1-\lambda)^{\frac{p-1}{p}}[A \cap C_r(\phi)]+t^{\frac{1}{p}}\lambda^{\frac{p-1}{p}}[B \cap C_r(\phi)] \right|_1 dr\\
&\geq& (1-t)^{\frac{1}{p}}(1-\lambda)^{\frac{p-1}{p}} \mu(A) + t^{\frac{1}{p}}\lambda^{\frac{p-1}{p}}\mu(B),
\end{eqnarray*}
as claimed. Since $\lambda$ was selected arbitrarily, we have
\[
\mu((1-t) \cdot_p A +_p t \cdot_p B) \geq \sup_{0 \leq \lambda \leq 1} \left[(1-t)^{\frac{1}{p}}(1-\lambda)^{\frac{p-1}{p}} \mu(A) + t^{\frac{1}{p}}\lambda^{\frac{p-1}{p}}\mu(B) \right],
\]
as desired.
\end{proof}
By induction on the dimension $n$, the detailed proof of the $L_p$-PLI for product measures with quasi-concave densities are given as follows.
\begin{theorem}[$L_p$-PLI for product measures with quasi-concave densities] \label{t:LpPrekopaLeindler} Let $p > 1$, $t \in [0,1]$, and $\mu = \mu_1 \times \cdots \times \mu_n$ be a product measure on $\R^n$, where, for each $i = 1,\dots,n,$ $\mu_i$ is a measure on $\R$ having a quasi-concave density $\phi \colon \R \to \R_+$ with maximum at origin. Let $f,g,h \colon \R^n\to \R_+$ be a triple of measurable functions, with $f,g$ weakly unconditional and positively decreasing, that satisfy the condition
	\begin{equation}\label{e:lpprekopaleindlerassumption}
	h((1-t)^{1/p}(1-\lambda)^{(p-1)/p}x + t^{1/p}\lambda^{(p-1)/p}y) \geq f(x)^{(1-t)^{1/p}(1-\lambda)^{(p-1)/p}}g(y)^{t^{1/p}\lambda^{(p-1)/p}}
	\end{equation}
	for every $x \in \text{supp}(f), y \in \text{supp}(g)$, and every $0 < \lambda < 1$. The the following integral inequality holds:
	\begin{equation}\label{e:lpprekopaleindlerconclusion2}
	\int_{\R^n} h d\mu \geq \sup_{0 < \lambda < 1}\left\{\left[\left(\frac{1-t}{1-\lambda}\right) \left(\frac{t}{\lambda}\right)\right]^{\frac{n}{p}}\left(\int_{\R^n} f^{\left(\frac{1-t}{1-\lambda}\right)^{\frac{1}{p}}} d\mu \right)^{1-\lambda}\left(\int_{\R^n}g^{\left(\frac{t}{\lambda}\right)^{\frac{1}{p}}} d\mu \right)^{\lambda} \right\}.
	\end{equation}
	
\end{theorem}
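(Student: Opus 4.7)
The plan is to prove Theorem~\ref{t:LpPrekopaLeindler} by induction on the dimension $n$, with Lemma~\ref{t:lpBmonedimensional} playing the role of the one-dimensional engine. I will fix $\lambda\in(0,1)$ at the outset and prove the $\lambda$-wise inequality, since the supremum appearing in \eqref{e:lpprekopaleindlerconclusion2} is then recovered by varying $\lambda$. Abbreviate $\alpha := (1-t)^{1/p}(1-\lambda)^{(p-1)/p}$ and $\beta := t^{1/p}\lambda^{(p-1)/p}$ (so that $\alpha+\beta\le 1$ by H\"older), together with $\gamma := \alpha/(1-\lambda) = ((1-t)/(1-\lambda))^{1/p}$ and $\delta := \beta/\lambda = (t/\lambda)^{1/p}$. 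In these notations the hypothesis \eqref{e:lpprekopaleindlerassumption} rewrites compactly as $h(\alpha x+\beta y)\ge [f(x)^{\gamma}]^{1-\lambda}[g(y)^{\delta}]^{\lambda}$, a Pr\'ekopa--Leindler-style form convenient for layer-cake arguments.

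For the base case $n=1$, the super-level-set inclusion $\{h\ge r\}\supseteq \alpha\{f^{\gamma}\ge r\}+\beta\{g^{\delta}\ge r\}$ (obtained from $r^{1-\lambda}r^{\lambda}=r$) combined with Lemma~\ref{t:lpBmonedimensional} yields the pointwise bound $\mu(\{h\ge r\})\ge \alpha\,\mu(\{f^{\gamma}\ge r\})+\beta\,\mu(\{g^{\delta}\ge r\})$, and integrating in $r$ via layer cake gives
\[
\int h\,d\mu\ \ge\ \alpha\int f^{\gamma}\,d\mu + \beta\int g^{\delta}\,d\mu.
\]
Writing $\alpha=(1-\lambda)\gamma$, $\beta=\lambda\delta$ and invoking weighted AM--GM then supplies a lower bound of the geometric-mean form $(\int f^{\gamma}d\mu)^{1-\lambda}(\int g^{\delta}d\mu)^{\lambda}$ of the type demanded by \eqref{e:lpprekopaleindlerconclusion2}.

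For the inductive step I split $\R^n=\R^{n-1}\times\R$ and put $\mu' := \mu_1\times\cdots\times\mu_{n-1}$. For every fixed $x_n,y_n\in\R$ and $z_n:=\alpha x_n+\beta y_n$, the sections $h(\cdot,z_n)$, $f(\cdot,x_n)$, $g(\cdot,y_n)$ inherit the hypothesis of the theorem in dimension $n-1$ with the same $\lambda$, so the inductive hypothesis gives $H(z_n)\ge C_{n-1}\,F(x_n)^{1-\lambda}G(y_n)^{\lambda}$, where
\[
H(z_n):=\int h(\cdot,z_n)\,d\mu',\quad F(x_n):=\int f(\cdot,x_n)^{\gamma}\,d\mu',\quad G(y_n):=\int g(\cdot,y_n)^{\delta}\,d\mu'.
\]
Setting $\widetilde F := F^{1/\gamma}$ and $\widetilde G := G^{1/\delta}$ rewrites the pointwise bound as $H(\alpha x_n+\beta y_n)\ge C_{n-1}\widetilde F(x_n)^{\alpha}\widetilde G(y_n)^{\beta}$, which is again the one-dimensional hypothesis of the theorem, now for the triple $(H,\widetilde F,\widetilde G)$. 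Applying the base case and using Fubini to collapse $\int \widetilde F^{\gamma}\,d\mu_n = \int F\,d\mu_n = \int f^{\gamma}\,d\mu$ (analogously for $g$, and $\int H\,d\mu_n = \int h\,d\mu$) then multiplies the one-dimensional constants across the $n$ coordinates.

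The main technical difficulty I anticipate lies in matching the prefactor $(\gamma\delta)^{n}$ appearing in \eqref{e:lpprekopaleindlerconclusion2}: the weighted AM--GM step in the base case only produces the weaker constant $\gamma^{1-\lambda}\delta^{\lambda}$ per dimension, and closing the gap will require using the still-unused hypotheses that $f,g$ are weakly unconditional and positively decreasing. Together with the monotonicity of the quasi-concave density $\phi$ about the origin, these hypotheses control how scalings of the argument interact with the measure---they allow one to compare integrals such as $\int f(\cdot/\gamma)^{\gamma}\,d\mu$ against $\int f^{\gamma}\,d\mu$, which is precisely the discrepancy between the two constants. Once the base case yields the sharp factor $\gamma\delta$, the induction multiplies it to $(\gamma\delta)^n$, and taking $\sup_{\lambda\in(0,1)}$ at the end recovers the stated inequality.
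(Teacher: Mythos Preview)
Your approach is essentially identical to the paper's: induction on $n$, with the base case handled by the super-level-set inclusion $\{h\ge r\}\supset\alpha\{f^{\gamma}\ge r\}+\beta\{g^{\delta}\ge r\}$, Lemma~\ref{t:lpBmonedimensional}, layer cake, and weighted AM--GM; and the inductive step via sections and a second application of the one-dimensional case. The paper proceeds exactly this way.

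The ``main technical difficulty'' you flag is not a real gap but a typographical inconsistency in the statement. The prefactor that the paper's own proof actually produces is
\[
\left[\left(\frac{1-t}{1-\lambda}\right)^{1-\lambda}\left(\frac{t}{\lambda}\right)^{\lambda}\right]^{\frac{n}{p}} \;=\; \bigl(\gamma^{1-\lambda}\delta^{\lambda}\bigr)^{n},
\]
exactly what your AM--GM step gives per dimension. This is also the constant that appears in Corollary~\ref{t:goodlemma} and is what is used in the proof of Theorem~\ref{t:LpPLmeasures}; the version $[\,(1-t)/(1-\lambda)\cdot t/\lambda\,]^{n/p}=(\gamma\delta)^{n}$ printed in \eqref{e:lpprekopaleindlerconclusion2} is simply missing the exponents $1-\lambda$ and $\lambda$ inside the bracket. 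So there is nothing further to ``close,'' and your speculative use of the weakly-unconditional and positively-decreasing hypotheses to upgrade the constant is unnecessary. Those hypotheses serve a more mundane purpose in the paper: they guarantee that the sections $f(\cdot,x_n)$, $g(\cdot,y_n)$ and the integrated functions $F,G$ in the inductive step retain the hypotheses needed to re-apply the result (in particular, that maxima are attained at the origin so the normalization $\|f\|_\infty=\|g\|_\infty=1$ and the range $r\in[0,1]$ in the layer cake make sense).
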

\begin{proof}  Without loss of generality we may assume that $f,g$ are bounded with $0<\|f\|_{\infty},\|g\|_{\infty}<\infty$ and $t \in (0,1).$ Fix $\lambda \in (0,1)$ arbitrarily.   

Assume that $n =1$. Multiplying the assumptions and conclusion of the theorem by constants $c_f,c_g,c_h$ with $$c_h :=\sup_{0 < \lambda < 1} (c_f)^{(1-t)^{1/p}(1-\lambda)^{(p-1)/p}}(c_g)^{t^{1/p}\lambda^{(p-1)/p}},$$
by taking $c_f:= \|f\|_{\infty}^{-1}, c_g := \|g\|_{\infty}^{-1}$, and therefore
\[
c_h:=\sup_{0 < \lambda < 1} (\|f\|_{\infty})^{-(1-t)^{1/p}(1-\lambda)^{(p-1)/p}}(\|g\|_{\infty})^{-t^{1/p}\lambda^{(p-1)/p}}.
\]
We assume that $\|f\|_{\infty} = \|g\|_{\infty} = 1$ without loss of generality.  We claim that for any $0 \leq r \leq 1$, the level set has the following relation
\[\{h \geq r\} \supset (1-t)^{\frac{1}{p}}(1-\lambda)^{\frac{p-1}{p}} \left\{f^{\left(\frac{1-t}{1-\lambda}\right)^{\frac{1}{p}}} \geq r\right\} + t^{\frac{1}{p}} \lambda^{\frac{p-1}{p}}\left\{g^{\left(\frac{t}{\lambda}\right)^{\frac{1}{p}}}\geq r\right\}.
\]
 Indeed, for any  $x \in \left\{f^{\left(\frac{1-t}{1-\lambda}\right)^{\frac{1}{p}}} \geq r\right\}$ and
 $y \in \left\{g^{\left(\frac{t}{\lambda}\right)^{\frac{1}{p}}}\geq r\right\}$, the hypothesis \eqref{e:lpprekopaleindlerassumption} of the theorem implies that 
\begin{align*}
h((1-t)^{1/p}(1-\lambda)^{(p-1)/p}x + t^{1/p}\lambda^{(p-1)/p}x)&\geq f(x)^{(1-t)^{1/p}(1-\lambda)^{(p-1)/p}}g(y)^{t^{1/p}\lambda^{(p-1)/p}}\\
&\geq \left(f^{\left(\frac{1-t}{1-\lambda}\right)^{\frac{1}{p}}}(x) \right)^{1-\lambda} \left(g^{\left(\frac{t}{\lambda}\right)^{\frac{1}{p}}}(y) \right)^{\lambda}\\
&\geq r^{1-\lambda+\lambda} = r.
\end{align*}
Therefore, using Fubini's theorem together with inequality \eqref{e:lpBmonedimensional} of Lemma~\ref{t:lpBmonedimensional}, we see that 
\begin{align*}
\int_{\R}h d\mu &\geq \int_0^{\infty}\mu(\{h \geq r\}) dr\\
&\geq \int_0^1 \mu(\{h \geq r\}) dr\\
&\geq  (1-t)^{\frac{1}{p}}(1-\lambda)^{\frac{p-1}{p}} \int_0^1 \mu\left(\left\{f^{\left(\frac{1-t}{1-\lambda}\right)^{\frac{1}{p}}} \geq r\right\} \right) dr\\
&+t^{\frac{1}{p}}\lambda^{\frac{p-1}{p}}\int_0^1 \mu\left( \left\{g^{\left(\frac{t}{\lambda}\right)^{\frac{1}{p}}}\geq r\right\}\right) dr \\
&=  (1-\lambda)\left(\frac{1-t}{1-\lambda}\right)^{\frac{1}{p}}\left(\int_{\R} f^{\left(\frac{1-t}{1-\lambda}\right)^{\frac{1}{p}}}(x) d\mu(x) \right) + \lambda\left(\frac{t}{\lambda}\right)^{\frac{1}{p}}\left(\int_{\R}g^{\left(\frac{t}{\lambda}\right)^{\frac{1}{p}}}(x) d\mu(x) \right) \\
&\geq   \left[\left(\frac{1-t}{1-\lambda}\right)^{1-\lambda}\left(\frac{t}{\lambda} \right)^{\lambda}\right]^{\frac{1}{p}}\left( \int_{\R} f^{\left(\frac{1-t}{1-\lambda}\right)^{\frac{1}{p}}}(x) d\mu(x)\right)^{1-\lambda}\left(\int_{\R} g^{\left(\frac{t}{\lambda}\right)^{\frac{1}{p}}}(x) d \mu(x)\right)^{\lambda},
\end{align*}
where, in the last step, we have used the arithmetic-geometric means inequality. This completes the proof for the case when $n = 1$. 

Assume that the conclusion of the theorem holds in dimension $n-1$ for some $n \geq 2$. Let $\lambda \in [0,1]$, $y_0,y_1 \in \R$ be arbitrary.  Set $y_2 := (1-t)^{1/p}(1-\lambda)^{(p-1)/p}y_0 + t^{1/p}\lambda^{(p-1)/p}y_1$.  Consider the function $f_{y_0},g_{y_1},h_{y_2} \colon \R^{n-1} \to \R_+$ defined by 
\[
f_{y_0}(x) = f(y_0,x), \quad g_{y_1}(x) = g(y_1,x), \quad h_{y_2}(x) = h(y_2,x). 
\]
The functions $f_{y_0}$ and $g_{y_1}$ are weakly unconditional and positively decreasing. 

Let $x_0,x_1 \in \R^{n-1}$ be arbitrary points belonging to the supports of $f_{y_0}$ and $g_{y_1}$, respectively, and set $x_2 := (1-t)^{\frac{1}{p}}(1-\lambda)^{\frac{p-1}{p}}x_0 + t^{\frac{1}{p}}\lambda^{\frac{p-1}{p}}x_1$. Using the hypothesis \eqref{e:lpprekopaleindlerassumption} placed on the triple of functions $f,g,h$, we have that 
\begin{align*}
h_{y_2}(x_2) &= h((1-t)^{\frac{1}{p}}(1-\lambda)^{\frac{p-1}{p}}y_0 + t^{\frac{1}{p}}\lambda^{\frac{p-1}{p}}y_1,(1-t)^{\frac{1}{p}}(1-\lambda)^{\frac{p-1}{p}}x_0 + t^{\frac{1}{p}}\lambda^{\frac{p-1}{p}}x_1)\\
&\geq f(y_0,x_0)^{(1-t)^{\frac{1}{p}}(1-\lambda)^{\frac{p-1}{p}}}g(y_1,x_1)^{t^{\frac{1}{p}}\lambda^{\frac{p-1}{p}}} \\
&= f_{y_0}(x_0)^{(1-t)^{\frac{1}{p}}(1-\lambda)^{\frac{p-1}{p}}}g_{y_1}(x_1)^{t^{\frac{1}{p}}\lambda^{\frac{p-1}{p}}}.
\end{align*}
Consequently, as all quantities involved were selected arbitrarily, the triple of functions $f_{y_0},g_{y_1},h_{y_2}$ satisfy the hypothesis of the theorem in dimension $n-1$, and so by the inductive hypothesis, we see that 
\begin{align*}
\int_{\R^{n-1}} h_{y_2} d\bar{\mu} \geq \left[\left(\frac{1-t}{1-\lambda}\right)^{1-\lambda}\left(\frac{t}{\lambda} \right)^{\lambda}\right]^{(n-1)\cdot\frac{1}{p}}\left( \int_{\R^{n-1}} f_{y_0}^{\left(\frac{1-t}{1-\lambda}\right)^{\frac{1}{p}}}d\bar{\mu}\right)^{1-\lambda}\left(\int_{\R^{n-1}} g_{y_1}^{\left(\frac{t}{\lambda}\right)^{\frac{1}{p}}} d \bar{\mu}\right)^{\lambda},
\end{align*}
where $\bar{\mu} = \mu_{1}\times \cdots \times \mu_{n-1}.$ 

Now define the one dimensional functions $F,G,H \colon \R \to \R_+$ in the following
\[
H(y_2) := \left[\left(\frac{1-t}{1-\lambda}\right)^{1-\lambda} \left(\frac{t}{\lambda}\right)^{\lambda}\right]^{- \left((n-1) \cdot \frac{1}{p} \right)}\int_{\R^{n-1}}h_{y_2}(x) d\bar{\mu}(x), 
\]
\[
F(y_0) := \int_{\R^{n-1}} f_{y_0}^{\left(\frac{1-t}{1-\lambda}\right)^{\frac{1}{p}}}(x)d\bar{\mu}(x), \quad G(y_1) := \int_{\R^{n-1}}g_{y_1}^{\left(\frac{t}{\lambda}\right)^{\frac{1}{p}}}(x) d\bar{\mu}(x).
\]
It can be seen that $F$ and $G$ are weakly unconditional and positively decreasing functions as the functions $f_{y_0}^{\left(\frac{1-t}{1-\lambda}\right)^{\frac{1}{p}}}$ and $g_{y_1}^{\left(\frac{t}{\lambda}\right)^{\frac{1}{p}}}$ remain weakly unconditional and positively decreasing. 
Then it can be easily checked that
\begin{eqnarray*}
&&\!\!\!\!\!\!\!\!\!\!\!\!\!\!\!H\left((1-t)^{\frac{1}{p}}(1-\lambda)^{\frac{p-1}{p}}y_0+t^{\frac{1}{p}}\lambda^{\frac{p-1}{p}}y_1\right) \\
&\geq& F(y_0)^{1-\lambda}G(y_1)^{\lambda}\\
&=& F(y_0)^{\left((1-t)^{\frac{1}{p}}(1-\lambda)^{\frac{p-1}{p}}\right)\left(\frac{1-\lambda}{1-t}\right)^{\frac{1}{p}}}G(y_1)^{\left(t^{\frac{1}{p}}\lambda^{\frac{p-1}{p}}\right)\left(\frac{\lambda}{t}\right)^{\frac{1}{p}}}.
\end{eqnarray*}
Furthermore, applying the one dimensional case of the inequality, we see that 
\begin{align*}
    \int_{\R}H(y_2) d\mu_n(y_2) &\geq  \left[\left(\frac{1-t}{1-\lambda}\right)^{1-\lambda} \left(\frac{t}{\lambda}\right)^{\lambda}\right]^{\frac{1}{p}}\left(\int_{\R} \left(F(y_0)^{\left(\frac{1-\lambda}{1-t}\right)^{\frac{1}{p}}}\right)^{\left(\frac{1-t}{1-\lambda}\right)^{\frac{1}{p}}}d\mu_n(y_0)\right)^{1-\lambda}\\
    &\times \left(\int_{\R} \left(G(y_1)^{\left(\frac{\lambda}{t}\right)^{\frac{1}{p}}}\right)^{\left(\frac{t}{\lambda}\right)^{\frac{1}{p}}}d\mu_n(y_1)\right)^{\lambda}\\
\end{align*}

that is,
\begin{align*}
\int_{\R^n} h(x) d\mu(x) &= \int_{\R} \int_{\R^{n-1}} h_{y_2}(x) d\bar{\mu}(x) d\mu_n(y_2)\\
&\geq \left[\left(\frac{1-t}{1-\lambda}\right)^{1-\lambda} \left(\frac{t}{\lambda}\right)^{\lambda}\right]^{\frac{n}{p}}\left(\int_{\R^n}F(y_0) d\mu_n(y_0)\right)^{1-\lambda}\\
    &\times \left(\int_{\R} G(y_1)d\mu_n(y_1)\right)^{\lambda}\\
    &= \left[\left(\frac{1-t}{1-\lambda}\right)^{1-\lambda} \left(\frac{t}{\lambda}\right)^{\lambda}\right]^{\frac{n}{p}}\left(\int_{\R^n} f^{\left(\frac{1-t}{1-\lambda}\right)^{\frac{1}{p}}}(x) d\mu(x) \right)^{1-\lambda}\\
    &\times \left(\int_{\R^n} g^{\left(\frac{t}{\lambda}\right)^{\frac{1}{p}}}(x) d\mu(x) \right)^{\lambda}.
\end{align*}

Since all quantities involved were selected arbitrarily, the desired inequality follows. 

\end{proof}

We noticed by using a similar proof, we can obtain the following theorem for the Lebesgue measure $\mu$ in Theorem~\ref{t:LpPrekopaLeindler}  which removes the weak unconditional condition on the functions involved. 

\begin{corollary}
Let $p > 1$ and $t \in [0,1]. $ Let $f,g,h \colon \R^n\to \R_+$ be a triple of measurable functions that satisfy the condition
\[
h((1-t)^{1/p}(1-\lambda)^{(p-1)/p}x + t^{1/p}\lambda^{(p-1)/p}y) \geq f(x)^{(1-t)^{1/p}(1-\lambda)^{(p-1)/p}}g(y)^{t^{1/p}\lambda^{(p-1)/p}}
\]
for every $x \in \text{supp}(f), y \in \text{supp}(g)$, and every $0 < \lambda < 1$. The the following integral inequality holds:
\[
\int_{\R^n} h dx \geq \sup_{0 < \lambda < 1}\left\{\left[\left(\frac{1-t}{1-\lambda}\right) \left(\frac{t}{\lambda}\right)\right]^{\frac{n}{p}}\left(\int_{\R^n} f^{\left(\frac{1-t}{1-\lambda}\right)^{\frac{1}{p}}} dx \right)^{1-\lambda}\left(\int_{\R^n}g^{\left(\frac{t}{\lambda}\right)^{\frac{1}{p}}} dx \right)^{\lambda} \right\}.
\]

\end{corollary}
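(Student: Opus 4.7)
The plan is to mirror the inductive proof of Theorem~\ref{t:LpPrekopaLeindler} on the dimension $n$, and observe that every step where that proof invoked Lemma~\ref{t:lpBmonedimensional} (and thus the quasi-concavity of the density together with weak unconditionality) can simply be replaced by the homogeneity of Lebesgue measure together with the one-dimensional Brunn--Minkowski inequality, which holds for arbitrary measurable sets on $\R$ without any symmetry assumption.

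For the base case $n=1$, I would first normalize so that $\|f\|_\infty=\|g\|_\infty=1$ (rescaling $h$ accordingly), fix $\lambda\in(0,1)$, and use the pointwise hypothesis to establish the superlevel-set inclusion
\[
\{h\geq r\}\supset (1-t)^{1/p}(1-\lambda)^{(p-1)/p}\bigl\{f^{((1-t)/(1-\lambda))^{1/p}}\geq r\bigr\}+t^{1/p}\lambda^{(p-1)/p}\bigl\{g^{(t/\lambda)^{1/p}}\geq r\bigr\}
\]
for every $r\in[0,1]$, exactly as in the $n=1$ step of Theorem~\ref{t:LpPrekopaLeindler}. Then, applying the layer-cake formula and the ordinary one-dimensional Brunn--Minkowski inequality (which for Lebesgue measure gives $|\alpha A+\beta B|_1\geq \alpha|A|_1+\beta|B|_1$ directly by scaling and subadditivity, with no quasi-concavity or unconditional hypothesis needed), integrating in $r$ produces the weighted arithmetic mean
\[
\int_\R h\,dx\geq (1-\lambda)\!\left(\tfrac{1-t}{1-\lambda}\right)^{1/p}\!\!\int_\R f^{((1-t)/(1-\lambda))^{1/p}}dx+\lambda\!\left(\tfrac{t}{\lambda}\right)^{1/p}\!\!\int_\R g^{(t/\lambda)^{1/p}}dx,
\]
and the AM--GM inequality converts this into the claimed product form.

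For the inductive step, assuming the result in dimension $n-1$, I would define slice functions $f_{y_0}(x):=f(y_0,x)$, $g_{y_1}(x):=g(y_1,x)$, and $h_{y_2}(x):=h(y_2,x)$ where $y_2=(1-t)^{1/p}(1-\lambda)^{(p-1)/p}y_0+t^{1/p}\lambda^{(p-1)/p}y_1$. The pointwise hypothesis on $(f,g,h)$ transfers verbatim to the triple $(f_{y_0},g_{y_1},h_{y_2})$ in $\R^{n-1}$, so the inductive hypothesis applies. Introducing
\[
F(y_0):=\int_{\R^{n-1}}f_{y_0}^{((1-t)/(1-\lambda))^{1/p}}dx,\quad G(y_1):=\int_{\R^{n-1}}g_{y_1}^{(t/\lambda)^{1/p}}dx,
\]
and the analogous $H(y_2)$ (normalized by the constant coming from the inductive conclusion), one verifies the one-dimensional hypothesis for $(F,G,H)$, applies the $n=1$ case to reduce to an integral over $y_2\in\R$, and finishes via Fubini. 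Since the Corollary concerns plain Lebesgue measure, \emph{no} analogue of the weak unconditionality of $f_{y_0}, g_{y_1}$ is ever required in the inductive step; the only structural fact used is that the density of Lebesgue measure is translation-invariant.

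The absence of a genuine obstacle is itself the point: the only role of weak unconditionality in Theorem~\ref{t:LpPrekopaLeindler} was to allow Lemma~\ref{t:lpBmonedimensional} to move a rescaled argument of the density $\phi$ back into the relevant superlevel set, which is exactly what fails for a general quasi-concave density but is automatic for the (constant) density of Lebesgue measure. Thus, the main verification is bookkeeping: ensuring that the normalization step, the superlevel-set inclusion, the Fubini decomposition, and the final AM--GM step all go through unchanged once the density is removed from the measure.
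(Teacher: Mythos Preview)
Your proposal is correct and follows exactly the approach the paper intends: the paper does not give a separate proof of this corollary but simply remarks that ``by using a similar proof'' to Theorem~\ref{t:LpPrekopaLeindler} one can drop the weak unconditional (and positively decreasing) hypotheses when $\mu$ is Lebesgue measure. Your outline fills in precisely those details, correctly observing that the only place the density enters in the $n=1$ step is through Lemma~\ref{t:lpBmonedimensional}, which for Lebesgue measure collapses to the scaling identity $|\alpha A+\beta B|_1\geq \alpha|A|_1+\beta|B|_1$ with no structural assumption on the level sets; the induction then proceeds verbatim.
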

If $p=1$, then $\lambda=t$ and it is exactly the Pr\'ekopa-Leindler inequality; i.e.,
for \(0<t<1\) and $f, g, h: \R^{n} \rightarrow \R_{+}$ three non-negative measurable functions defined on $\R^n$,  if these functions satisfy
$$
h((1-t) x+t y) \geq f(x)^{1-t} g(y)^{t}
$$
for all \(x\) and \(y\) in \(\R^{n}\), then
$$
\int_{\mathbb{R}^{n}} h(x) \mathrm{d} x \geq\left(\int_{\mathbb{R}^{n}} f(x) \mathrm{d} x\right)^{1-t}\left(\int_{\mathbb{R}^{n}} g(x) \mathrm{d} x\right)^{t}.
$$

\section{$L_p$-Minkowski's first type and $L_p$ isoperimetric inequalities for measures }
Similar to the definition of mixed volume and surface area to be the derivative for the perturbation of the Minkowski combination for convex bodies, we will give the surface area for functions with respect to the $L_{p,s}$--supremal convolution in this section.
Furthermore, we will explore $L_p$ versions of the Minkowski's first inequality for non-negative  functions, which gives rise to $L_p$-Minkowski first type inequalities for certain classes of measures. 

Firstly, the definition of $F$-concavity in terms of the integral of $L_{p,s}$--supremal convolution for measures is defined in the following way.

\begin{definition}\label{definitionoffconcavity} Let $p \in [1,\infty)$, $s \in [0,\infty]$, and $F \colon \R_+ \to \R$ be an invertible differentiable function.  We say that a measure $\mu$ on $\R^n$ is $F(t)$-concave with respect to the $L_{p,s}$--supremal convolution of functions belonging to some class $\mathcal{A}$ of bounded non-negative $\mu$-integrable functions if, for any members $f,g$ belonging to $\mathcal{A}$ and any $t \in [0,1]$, one has that 
\begin{equation}\label{e:functionalMink1stassumption}
\int_{\R^n} [((1-t) \times_{p,s} f)\oplus_{p,s} (t \times_{p,s} g)] d\mu \geq F^{-1}\left((1-t)F\left(\int_{\R^n} f d\mu\right) + t F\left(\int_{\R^n} g d\mu\right)\right).
\end{equation}
\end{definition}

Inspired by the works of Colesanti and Fragal\'a in \cite{ColesantiFragala} and of Klartag in \cite{Klartag}, we give the following definition of the surface area extension for $L_{p,s}$--supremal convolution with measures.

\begin{definition} Let $\mu$ be a Borel measure on $\R^n$, $p \geq 1$, and $s \in [0, \infty]$. We define the $L_p$-$\mu$-surface area of a $\mu$-integrable function $f \colon \R^n \to \R_+$ with respect to a $\mu$-integrable function $g$ by 
\[
S_{\mu,p,s}(f,g) := \liminf_{\e \to 0^+} \frac{\int_{\R^n} f \oplus_{p,s} (\e \times_{p,s} g) d\mu -\int_{\R^n}f d\mu }{\e}.
\]
If $\mu$ is the Lebesgue measure on $\R^n$, we will simply denote it as $S_{p,s}$.
\end{definition}

One of the main results in this section is the following theorem with respect to $L_p$-Minkowski first inequality, which is a generalization of Theorem~3.8 in \cite{Liv} (see also Theorem~4.1 in \cite{Wu}) in the sense of the functional setting of $L_p$ sum.

\begin{theorem}[$L_p$-MFI for  measures]\label{t:functionalMink1st} Let $p \in[1,\infty)$, $s \in [0,\infty]$, and $F \colon \R_+\to \R$ be a differentiable invertible function. Let $\mu$ be a Borel measure on $\R^n$, and assume that $\mu$ is $F(t)$-concave with respect to some class, $\mathcal{A}$, of non-negative bounded $\mu$-integrable functions and the $L_{p,s}$--supremal convolution.
Then the following inequality holds for any members $f,g$ of the class $\mathcal{A}$: 
\begin{equation}\label{e:functionalMink1stconclusion}
S_{\mu,p,s}(f,g) \geq S_{\mu,p,s}(f,f) + \frac{F\left(\int_{\R^n}g d\mu\right) - F\left(\int_{\R^n} f d\mu\right)}{F'\left(\int_{\R^n} f d\mu \right)}.
\end{equation}
In particular, when $\int_{\R^n} f d\mu = \int_{\R^n} g d\mu$, we obtain the following isoperimetric type inequality: 
\[
S_{\mu,p,s}(f,g) \geq S_{\mu,p,s}(f,f). 
\]
\end{theorem}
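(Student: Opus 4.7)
The plan is to apply the $F(t)$-concavity assumption \eqref{e:functionalMink1stassumption} to the pair $(f,g)$, take the right-derivative at $t=0$, and then reinterpret the resulting inequality in terms of the $\mu$-surface areas by means of the homogeneity identity
\[
\alpha \times_{p,s} f \oplus_{p,s} \beta \times_{p,s} g \;=\; (\alpha+\beta)\times_{p,s}\bigl[(1-t)\times_{p,s}f\oplus_{p,s}t\times_{p,s}g\bigr],\qquad t=\tfrac{\beta}{\alpha+\beta},
\]
which is the functional analogue of the set-level scaling $\alpha\cdot_p A+_p\beta\cdot_p B=(\alpha+\beta)^{1/p}\cdot[(1-t)\cdot_p A+_p t\cdot_p B]$ built into \eqref{e:LYZMinkowskicombo}.

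First, set $A:=\int f\,d\mu$, $B:=\int g\,d\mu$, $H^g_t:=(1-t)\times_{p,s}f\oplus_{p,s}t\times_{p,s}g$, and $\Psi(t):=\int H^g_t\,d\mu$. Under the tacit class hypothesis $f\oplus_{p,s}0=f$ one has $\Psi(0)=A$, so the $F$-concavity \eqref{e:functionalMink1stassumption} reads $\Psi(t)\ge F^{-1}((1-t)F(A)+tF(B))$ with equality at $t=0$; differentiating the smooth right-hand side in $t$ at $t=0^+$ (using $(F^{-1})'(F(A))=1/F'(A)$) yields
\[
\Psi'(0^+)\;:=\;\liminf_{t\to 0^+}\frac{\Psi(t)-A}{t}\;\geq\;\frac{F(B)-F(A)}{F'(A)}. \qquad (\star)
\]

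Next, apply the homogeneity identity with $(\alpha,\beta)=(1,\epsilon)$ to rewrite $f\oplus_{p,s}(\epsilon\times_{p,s}g)=(1+\epsilon)\times_{p,s}H^g_{\sigma(\epsilon)}$, where $\sigma(\epsilon):=\epsilon/(1+\epsilon)$, and Taylor-expand the integrand $(1+\epsilon)^{s/p}H^g_{\sigma(\epsilon)}(x/(1+\epsilon)^{1/p})$ about $\epsilon=0$. Using $H^g_0=f$, $\sigma'(0)=1$, and the interchange of $\frac{d}{d\epsilon}\big|_0$ with $\int\cdot\,d\mu$, this produces
\[
S_{\mu,p,s}(f,g)\;=\;\tfrac{s}{p}A-\tfrac{1}{p}\!\int x\cdot\nabla f\,d\mu+\Psi'(0^+).
\]
Running the same computation with $g$ replaced by $f$, and invoking the self-similarity $(1-t)\times_{p,s}f\oplus_{p,s}t\times_{p,s}f=f$ for all $t\in[0,1]$ (the functional analogue of $(1-t)\cdot_p K+_p t\cdot_p K=K$ for convex bodies through the origin), makes the corresponding right-derivative vanish, so
\[
S_{\mu,p,s}(f,f)\;=\;\tfrac{s}{p}A-\tfrac{1}{p}\!\int x\cdot\nabla f\,d\mu.
\]
Subtracting these two identities cancels the radial-derivative terms and leaves $S_{\mu,p,s}(f,g)-S_{\mu,p,s}(f,f)=\Psi'(0^+)$, which combined with $(\star)$ is exactly \eqref{e:functionalMink1stconclusion}; the isoperimetric statement is then the case $B=A$.

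The main obstacle will be the first-order expansion of $\int H^g_{\sigma(\epsilon)}(x/(1+\epsilon)^{1/p})\,d\mu(x)$: it requires both enough regularity of $f$ and $g$ to render $t\mapsto H^g_t$ pointwise differentiable at $t=0$, and a uniform integrable majorant for the associated difference quotients so as to justify dominated convergence and the interchange of derivative and integral. Parallel care will be needed when fixing the class $\mathcal{A}$, since both structural properties invoked above (neutrality of $f\oplus_{p,s}0$ and self-similarity of the $L_{p,s}$-combination with itself) must be verified there; on the natural admissible choices of smooth $\tfrac{1}{s}$-concave functions whose support is a convex body containing the origin, both hold in exact parallel with the convex-body case.
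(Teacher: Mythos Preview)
Your argument is correct in outline (with the regularity caveats you yourself flag), but the paper organizes the computation differently and thereby avoids the explicit radial-derivative term and the pointwise differentiability of $f$.

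Instead of factoring out the total weight $1+\varepsilon$ and writing $f\oplus_{p,s}(\varepsilon\times_{p,s}g)=(1+\varepsilon)\times_{p,s}H^g_{\sigma(\varepsilon)}$, the paper rescales \emph{only the first argument}: it writes
\[
f\oplus_{p,s}(\varepsilon\times_{p,s}g)
=\Bigl[(1-\varepsilon)\times_{p,s}\bigl(\tfrac{1}{1-\varepsilon}\times_{p,s}f\bigr)\Bigr]\oplus_{p,s}(\varepsilon\times_{p,s}g),
\]
so that the $F$-concavity hypothesis applies directly to the pair $\bigl(\tfrac{1}{1-\varepsilon}\times_{p,s}f,\,g\bigr)$. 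Differentiating the resulting lower bound
\[
G(\varepsilon)=F^{-1}\Bigl((1-\varepsilon)F\bigl(\textstyle\int\tfrac{1}{1-\varepsilon}\times_{p,s}f\,d\mu\bigr)+\varepsilon F\bigl(\textstyle\int g\,d\mu\bigr)\Bigr)
\]
at $\varepsilon=0$ then produces $S_{\mu,p,s}(f,f)$ \emph{automatically} as the $\varepsilon$-derivative of $\int\tfrac{1}{1-\varepsilon}\times_{p,s}f\,d\mu$, without ever introducing or cancelling the term $\tfrac{s}{p}A-\tfrac{1}{p}\int x\cdot\nabla f\,d\mu$. The chain rule then gives $G'(0)=S_{\mu,p,s}(f,f)+[F(\int g)-F(\int f)]/F'(\int f)$ directly.

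Your route has the bonus of the explicit formula $S_{\mu,p,s}(f,f)=\tfrac{s}{p}\int f\,d\mu-\tfrac{1}{p}\int x\cdot\nabla f\,d\mu$, but at the cost of requiring $f$ to be differentiable and of invoking the self-similarity $(1-t)\times_{p,s}f\oplus_{p,s}t\times_{p,s}f=f$ explicitly. The paper's rescaling sidesteps the first of these entirely. (To be fair, the paper's identification of $\tfrac{d}{d\varepsilon}\big|_{0}\int\tfrac{1}{1-\varepsilon}\times_{p,s}f\,d\mu$ with $S_{\mu,p,s}(f,f)$ tacitly relies on the same self-similarity, so the structural hypotheses you isolate are implicitly present there as well; the paper simply does not unpack them.)
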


\begin{proof} Let $f,g \in \mathcal{A}$. Since $f,g$ are $\mu$-integrable, without loss of generality, we may assume that $f,g$ are compactly supported. 

Assume, first that $s \neq 0, +\infty$. According to the assumption \eqref{e:functionalMink1stassumption}, for any $\e>0$ sufficiently small, we may write 
\begin{eqnarray}
&&\nonumber\int_{\R^n} [f \oplus_{p,s} (\e \times_{p,s} g)](x) d\mu(x) \\
&=&\nonumber \int_{\R^n} \left\{\left[(1-\e) \times_{p,s} \left(\frac{1}{1-\e} \times_{p,s}f(x)\right)\right] \oplus_{p,s} (\e \times_{p,s} g(x))\right\}d\mu(x)\\
&\geq& F^{-1}\left\{(1-\e) F\left(\frac{1}{1-\e} \times_{p,s}f(x)d\mu(x) \right) + \e F\left( \int_{\R^n} g(x) d\mu(x) \right)\right\}\label{concaveinequality}.
\end{eqnarray}
Define the function 
\begin{equation}\label{inversesumofF}
G_{F, \mu,s,p}(\e) := F^{-1}\left[(1-\e) F\left(\frac{1}{1-\e} \times_{p,s}f(x)d\mu(x) \right) + \e F\left( \int_{\R^n} g(x) d\mu(x)\right)\right].
\end{equation}
When $\e = 0$ the equality in (\ref{concaveinequality}) holds true obviously, and we have $G_{F,\mu,s,p}(0) = \int_{\R^n} f(x) d\mu(x)$. Therefore, we see that 
\begin{align*}
S_{\mu,p,s}(f,g) &=\liminf_{\e \to 0^+} \frac{\int_{\R^n} [f \oplus_{p,s} (\e \times_{p,s} g)](x) d\mu(x) -\int_{\R^n}f(x) d\mu(x)}{\e}\\
&\geq \liminf_{\e \to 0^+} \frac{G_{F,\mu,s,p}(\e) - G_{F,\mu,s,p}(0)}{\e} = G_{F,\mu,s,p}'(0).
\end{align*}
What remains is to complete $G_{F,\mu,s,p}'(0)$. 
Next, we notice that 
\begin{align*}
&\frac{d}{d\e}\left[\int_{\R^n} (\frac{1}{1-\e} \times_{p,s} f)(x) d\mu(x)\right] \biggr\rvert_{\e=0}\\
&= \lim_{\e \to 0^+} \frac{\int_{\R^n} [(1+\e+\e^2+\cdots) \times_{p,s} f](x) d\mu(x) - \int_{\R^n} f(x) d\mu(x)}{\e}\\
&= S_{\mu,p,s}(f,f). 
\end{align*}
Finally, using  the fact that, for any invertible differentiable function $F \colon \R_+ \to \R$, one has 
\[
F^{-1}(a)' = \frac{1}{F'(F^{-1}(a))},
\]
and by (\ref{inversesumofF}), there is 
\begin{align*}
S_{\mu,p,s}(f,g) &\geq G_{F,\mu,s,p}'(0)\\
&= \frac{(1-\e) S_{\mu,p,s}(f,f)  \cdot F'\left(\int_{\R^n}\frac{1}{1-\e} \times_{p,s} f(x) d\mu(x) \right) \mid_{\e=0} }{F'\left(\int_{\R^n} f d\mu \right)}\\
&+\frac{- F \left( \int_{\R^n}[(1-\e) \times_{p,s} f](x) d\mu(x)\right)\mid_{\e=0}+F\left(\int_{\R^n} g(x) d\mu(x)\right)}{F'\left(\int_{\R^n} f d\mu \right)}\\
&=S_{\mu,p,s}(f,f) + \frac{F\left(\int_{\R^n}g(x)d\mu(x)\right) -F\left(\int_{\R^n}f(x)d\mu(x) \right)}{F'\left(\int_{\R^n}f(x)d\mu(x)\right)},
\end{align*}
as desired.

The arguments for $s=0, \infty$ follow in the same manner and the proofs are omitted.
\end{proof}
In particular, we obtain a natural  corollary of Theorem~\ref{t:functionalMink1st} for Lebesgue measure $\mu$ and for $s=\infty$ respectively.

\begin{corollary} 
	(i) Let $p \in [1,\infty)$, $s \in [0,\infty)$. Then, for any bounded integrable functions $f,g \colon \R^n \to \R_+$, one has 
\[
S_{p,s}(f,g) \geq S_{p,s}(f,f) +\frac{\left(\int_{\R^n}g(x)dx \right)^{\frac{p}{n+s}}-\left(\int_{\R^n}f(x)dx \right)^{\frac{p}{n+s}}}{\frac{p}{n+s}\left(\int_{\R^n}f(x)dx\right)^{\frac{p}{n+s}-1}}.
\]
In particular, when $\int_{\R^n} f(x) dx = \int_{\R^n} g(x) dx>0$, we obtain the following isoperimetric type inequality:
\[
S_{p,s}(f,g) \geq S_{p,s}(f,f).
\]

(ii) Let $p \in [1,\infty)$. When $s =+\infty$, and $\mu$ is any $log$-concave measure on $\R^n$, the following inequality holds: 
\[
S_{\mu,p,\infty}(f,g) \geq S_{\mu,p,\infty}(f,f) + \left(\int_{\R^n} f(x) d\mu(x) \right) \log\left[\frac{\int_{\R^n} g(x) d\mu(x)}{\int_{\R^n} f(x) d\mu(x)} \right].
\]
If, in addition, $\int_{\R^n} f d\mu = \int_{\R^n} g d\mu > 0$, then we obtain the following isoperimetric type inequality: 
\[
S_{\mu,p,\infty}(f,g) \geq S_{\mu,p,\infty}(f,f).
\]
\end{corollary}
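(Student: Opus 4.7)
The plan is to obtain both inequalities as direct applications of Theorem~\ref{t:functionalMink1st} for appropriate choices of the invertible differentiable function $F$, with the underlying $F(t)$-concavity hypothesis in each case supplied by a Brunn--Minkowski/Borell--Brascamp--Lieb type result already established in the paper.

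For part~(i), I would set $F(t) = t^{p/(n+s)}$, take $\mu$ to be Lebesgue measure on $\R^n$, and let $\mathcal{A}$ be the class of bounded non-negative integrable functions. Since $(1-t)\times_{p,s} f \oplus_{p,s} t\times_{p,s} g$ is, by construction, the minimal function satisfying hypothesis~(\ref{e:LpBBLassumption}), Theorem~\ref{t:LPBBL} applied with $h$ equal to this minimal function yields exactly the $F(t)$-concavity~(\ref{e:functionalMink1stassumption}) in the form
\[
\int_{\R^n}[(1-t)\times_{p,s}f\oplus_{p,s}t\times_{p,s}g]\,dx \;\geq\; \Bigl[(1-t)\Bigl(\int_{\R^n} f\,dx\Bigr)^{p/(n+s)} + t\Bigl(\int_{\R^n} g\,dx\Bigr)^{p/(n+s)}\Bigr]^{(n+s)/p}.
\]
Substituting $F'(a) = \tfrac{p}{n+s}\,a^{p/(n+s)-1}$ into the right-hand side of~(\ref{e:functionalMink1stconclusion}) then produces the stated inequality.

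For part~(ii), I would set $F(t) = \log t$, take $s = \infty$, and let $\mu$ be a log-concave measure with $\mathcal{A}$ the cone of bounded non-negative $\mu$-integrable functions. The required $F(t)$-concavity takes the form
\[
\int_{\R^n}[(1-t)\times_{p,\infty}f\oplus_{p,\infty}t\times_{p,\infty}g]\,d\mu \;\geq\; \Bigl(\int_{\R^n} f\,d\mu\Bigr)^{1-t}\Bigl(\int_{\R^n} g\,d\mu\Bigr)^{t}.
\]
To verify this, I would specialize the parameter $\lambda = t$ inside the supremum defining the $L_{p,\infty}$-supremal convolution: the scalars $(1-t)^{1/p}(1-\lambda)^{(p-1)/p}$ and $t^{1/p}\lambda^{(p-1)/p}$ then collapse to $1-t$ and $t$ respectively, yielding the pointwise bound
\[
[(1-t)\times_{p,\infty}f\oplus_{p,\infty}t\times_{p,\infty}g](z) \;\geq\; \sup_{z = (1-t)u + tv} f(u)^{1-t}\,g(v)^{t}.
\]
The classical Borell/Pr\'ekopa--Leindler inequality for log-concave measures applied to the right-hand side supplies the desired lower bound, establishing the $F(t)$-concavity. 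Substituting $F'(a) = 1/a$ into~(\ref{e:functionalMink1stconclusion}) then delivers the claimed logarithmic inequality.

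The two isoperimetric statements follow immediately by specializing to $\int_{\R^n} f\,d\mu = \int_{\R^n} g\,d\mu > 0$, which makes the second summand on the right-hand side of~(\ref{e:functionalMink1stconclusion}) vanish. I expect no substantial obstacle in this argument; the only genuinely nontrivial verification is the $\lambda = t$ reduction in part~(ii), which reveals the pointwise dominance of the $L_{p,\infty}$-supremal convolution over the classical log-Minkowski convolution. Once that reduction is in place, the remainder is a direct substitution into Theorem~\ref{t:functionalMink1st} and routine differentiation of the chosen $F$.
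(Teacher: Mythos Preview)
Your proposal is correct and matches the paper's approach: the paper states this corollary as a direct consequence of Theorem~\ref{t:functionalMink1st} (for Lebesgue measure and for $s=\infty$ respectively) without supplying any further proof, and your argument simply fills in the implicit verifications. The $\lambda = t$ reduction you identify in part~(ii) is exactly the content of the paper's Remark~2.2(a) (namely $h_{p,t,s} \geq h_{1,t,s}$), after which the Pr\'ekopa--Leindler inequality for log-concave measures yields the required $F(t)$-concavity with $F = \log$.
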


In Definition~\ref{definitionoffconcavity}, if one takes $f=1_{A}$, $g=1_B$, and $h_{1_{(1-t) \cdot_p A +_p t \cdot_p B}}$, then the definition goes back to  $F$-concavity with respect to the $L_p$-Minkowski convex combination for convex bodies  in \cite{Wu}.  For $p \geq 1$ and $F \colon \R_+ \to \R$ a strictly increasing invertible differentiable function,  we say that a non-negative measure $\mu$ on $\R^n$ (that is absolutely continuous with respect to the Lebesgue measure on $\R^n$) is $F(t)$-concave with respect to the $L_p$-Minkowski convex combination on some class of Borel sets if, for all Borel sets $A,B \subset \R^n$ belonging to this class, and every $t \in [0,1]$, one has 
\begin{equation}\label{e:F(t)concave}
F(\mu((1-t) \cdot_p A +_p t \cdot_p B)) \geq (1-t)F(\mu(A))+ tF(\mu(B)).
\end{equation}

Instead of  the definitions in \cite{LMNZ} (see also \cite{Liv, JesusManuel,Wu}) for measure $\mu$ with respect to convex bodies: we propose the definition with respect to the composite of the function $F$ and measure $\mu$---$F\circ\mu$: given a measure $\mu$ on $\R^n$ which is $F(t)$-concave with respect to some class of Borel sets and members $A,B \subset \R^n$ belonging to this class, we define the $L_p$-$\mu$-surface area of the set $A$ with respect to the set $B$---$V_{p,F}^{\mu}(A,B)$ as
\begin{equation}\label{mixedvolume}
V_{p,F}^{\mu}(A,B) := F'(1) \cdot \liminf_{\e\to 0^+} \frac{\mu(A+_p\e\cdot_pB) - \mu(A)}{t}, \end{equation}
and denote by
\begin{equation}\label{surfacearea}
M_{p,F}^{\mu}(A):= \frac{1}{F'(1)}\cdot \mu(A) -\frac{d}{d\e}^{-} \biggr\rvert_{\e=1} \mu(\e \cdot_p A).
\end{equation} 
One lemma about the concavity of $F\circ\mu$ with respect of the $L_p$-Minkowski convex combination for convex bodies is needed.
\begin{lemma}\label{t:extragoodcoro}
Let $p \geq 1$, $F \colon \R_+ \to \R$ be a strictly increasing differentiable function and $\mu$ be a measure on $\R^n$ that is absolutely continuous with respect to the Lebesgue measure. Suppose that $\mu$ is $F(t)$-concave with respect to some class $\mathcal{A} \subset \mathcal{K}_{(o)}^n$ and closed with respect to the $L_p$-Minkowski convex combination of its members.  Then, for any members $A,B\in\mathcal{A}$, the maps  
\[
\e \mapsto F(\mu(\e \cdot_p A)), \quad \e \mapsto F(\mu(A+_p \e \cdot_p B))
\]
are concave on $[0,\infty).$
\end{lemma}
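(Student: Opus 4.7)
The plan is to derive both concavity statements as immediate consequences of the hypothesized $F(t)$-concavity \eqref{e:F(t)concave}, together with two elementary identities for $L_p$-Minkowski combinations of convex bodies in $\mathcal{K}_{(o)}^n$. The heavy lifting is purely algebraic: everything reduces to the support-function characterization \eqref{e:Lpminkowskicomb}, which asserts $h_{\alpha \cdot_p K +_p \beta \cdot_p L}^p = \alpha h_K^p + \beta h_L^p$ for $K, L \in \mathcal{K}_{(o)}^n$ and $\alpha, \beta \geq 0$.

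The first step is to establish, for $A, B \in \mathcal{K}_{(o)}^n$, $t \in [0,1]$, and $\e_1, \e_2 \geq 0$, the two identities
\[
(1-t) \cdot_p (\e_1 \cdot_p A) +_p t \cdot_p (\e_2 \cdot_p A) = ((1-t)\e_1 + t\e_2) \cdot_p A,
\]
\[
(1-t) \cdot_p (A +_p \e_1 \cdot_p B) +_p t \cdot_p (A +_p \e_2 \cdot_p B) = A +_p ((1-t)\e_1 + t\e_2) \cdot_p B.
\]
Both follow by iterating \eqref{e:Lpminkowskicomb}: the $p$-th support function of each side of the first identity collapses to $((1-t)\e_1 + t\e_2) h_A^p$, and each side of the second to $h_A^p + ((1-t)\e_1 + t\e_2) h_B^p$. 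Since convex bodies containing the origin are determined by their support functions, the identities follow.

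With these in hand, concavity is a one-line application of the hypothesis. Fix $\e_1, \e_2 \geq 0$ and $t \in [0,1]$. The closedness of $\mathcal{A}$ under $L_p$-Minkowski combinations ensures that the dilates $\e_1 \cdot_p A$ and $\e_2 \cdot_p A$ lie in $\mathcal{A}$, so that \eqref{e:F(t)concave} combined with the first identity yields
\begin{align*}
F(\mu(((1-t)\e_1 + t\e_2) \cdot_p A)) &= F(\mu((1-t) \cdot_p (\e_1 \cdot_p A) +_p t \cdot_p (\e_2 \cdot_p A))) \\
&\geq (1-t) F(\mu(\e_1 \cdot_p A)) + t F(\mu(\e_2 \cdot_p A)),
\end{align*}
which is precisely the concavity of $\e \mapsto F(\mu(\e \cdot_p A))$. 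An identical argument, applied to $A +_p \e_1 \cdot_p B$ and $A +_p \e_2 \cdot_p B$ (again members of $\mathcal{A}$ by closedness) and using the second identity, gives the concavity of $\e \mapsto F(\mu(A +_p \e \cdot_p B))$.

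No serious obstacle is anticipated, since the entire proof is a support-function calculation followed by a single appeal to the hypothesis. The only mild point to verify is that the intermediate bodies lie in $\mathcal{A}$, which is supplied directly by closedness; the boundary case $\e = 0$ is handled by taking $\e \downarrow 0$ and using continuity of $F \circ \mu$ on one-parameter families of $L_p$-combinations.
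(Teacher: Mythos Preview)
Your proposal is correct and follows essentially the same approach as the paper: both prove the key identity via the support-function characterization \eqref{e:Lpminkowskicomb} and then apply the $F(t)$-concavity hypothesis directly. The paper writes out only the second identity in detail and dismisses the first with ``in the same spirit,'' whereas you state both explicitly; otherwise the arguments are identical.
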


\begin{proof} Let $t \in [0,1]$ and $\e_1,\e_2 \geq 0$.
Since all sets involved are convex bodes containing the origin, it means $A+_p [((1-t)\e_1 + t \e_2) \cdot_p B]\in\mathcal{K}_{(o)}^n$. Therefore, we may use the support function definition of the $L_p$-Minkowski convex combination. Notice that 
\begin{align*}
h_{A+_p [((1-t)\e_1 + t \e_2) \cdot_p B]}&=\left[h_A^p +[(1-t)\e_1+t\e_2]h_B^p \right]^{\frac{1}{p}}\\
&=\left[(1-t)\left(h_A^p + \e_1h_B^p\right)+ t \left(h_A^p + \e_2 h_B^p\right) \right]^{\frac{1}{p}}\\
&=\left[(1-t)\left(h_{A +_p \e_1 \cdot_p B}\right)^p + t\left(h_{A +_p \e_2\cdot_p B}\right)^p\right]^{\frac{1}{p}}\\
&=h_{(1-t) \cdot_p [A+_p \e_1 \cdot_p B] +t \cdot_p[A + \e_2 \cdot_p B]}.
\end{align*}
Therefore, we see that 
\[
A+_p ((1-t)\e_1 + t \e_2) \cdot_p B = (1-t) \cdot_p [A+_p \e_1 \cdot_p B] +t \cdot_p[A + \e_2 \cdot_p B].
\]
Using the fact that $\mu$ is $F(t)$-concave with respect to the given class of convex bodies containing the origin, we obtain 
\begin{align*}
F(\mu(A+_p ((1-t)\e_1 + t \e_2) \cdot_p B)) &= F(\mu((1-t) \cdot_p [A+_p \e_1 \cdot_p B] +t \cdot_p[A + \e_2 \cdot_p B]))\\
&\geq (1-t) F(\mu(A+_p \e_1 \cdot_p B))+ t F(\mu(A+_p \e_2 \cdot_p B)).
\end{align*}
In the same spirit, the proof of the other inequality assertion follows obviously. 
\end{proof}

Together with the definition of $L_p$-$\mu$-surface area (\ref{mixedvolume}), (\ref{surfacearea}) and Lemma \ref{t:extragoodcoro}, we are now prepared to establish the following isoperimetric type inequality. 

\begin{theorem}[$L_p$-ISMI for measures]\label{t:lpisoperimetricinequalitygeneral}
Let $p \geq 1$, $F \colon \R_+ \to \R$ be a strictly increasing differentiable function and $\mu$ be a measure on $\R^n$ that is absolutely continuous with respect to the Lebesgue measure. Suppose that $\mu$ is $F(t)$-concave with respect to some class $\mathcal{A}\subset\mathcal{K}_{(o)}^n$ and closed with respect to the $L_p$-Minkowski convex combination of its members.  Then for any  $A,B\in\mathcal{A}$  and such that $\mu((1-t) \cdot_p A +_p t \cdot_p B))$ is finite, one has 
\[
V_{p,F}^{\mu}(A,B) + F'(1) M_{p,F}^{\mu}(A) \geq \frac{F'(1)[F(\mu(B))-F(\mu(A))]}{F'(\mu(A))}+\mu(A),
\]
with equality only if and only if $A=B$.
\end{theorem}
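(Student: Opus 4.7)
The plan is to combine the $F(t)$-concavity of $\mu$ with the one-variable concavity from Lemma~\ref{t:extragoodcoro}, applied to the canonical decomposition
\[
A +_p \e \cdot_p B \;=\; (1+\e)\cdot_p C_\e, \qquad C_\e \;:=\; \tfrac{1}{1+\e}\cdot_p A +_p \tfrac{\e}{1+\e}\cdot_p B,
\]
which is verified from the support-function computation $h^p_{A +_p \e\cdot_p B} = h_A^p + \e h_B^p = (1+\e)\bigl(\tfrac{1}{1+\e}h_A^p + \tfrac{\e}{1+\e}h_B^p\bigr)$. Since $C_\e$ is a convex combination of $A$ and $B$ inside $\mathcal{A}$, the $F(t)$-concavity of $\mu$ produces the bound
\[
F(\mu(C_\e)) \;\geq\; \tfrac{1}{1+\e}F(\mu(A)) + \tfrac{\e}{1+\e}F(\mu(B)).
\]

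Next, Lemma~\ref{t:extragoodcoro} asserts that $\Lambda_{C_\e}(\eta) := F(\mu(\eta \cdot_p C_\e))$ is concave on $[0,\infty)$, so integrating the nonincreasing derivative over $[1, 1+\e]$ gives
\[
F(\mu((1+\e)\cdot_p C_\e)) - F(\mu(C_\e)) \;\geq\; \e\, F'(\mu((1+\e)\cdot_p C_\e))\,\tfrac{d^-}{d\eta}\Big|_{\eta = 1+\e}\mu(\eta \cdot_p C_\e).
\]
Chaining this with the previous display, using $\tfrac{1}{1+\e} = 1 - \e + O(\e^2)$, dividing by $\e$ and letting $\e \to 0^+$, the continuity $C_\e \to A$ together with the differentiability of $\eta \mapsto \mu(\eta\cdot_p A)$ at $\eta = 1$ yields
\[
F'(\mu(A))\,D_B(A) \;\geq\; F(\mu(B)) - F(\mu(A)) + F'(\mu(A))\,D_A^-,
\]
where $D_B(A) := \liminf_{\e\to 0^+}\tfrac{\mu(A +_p \e\cdot_p B) - \mu(A)}{\e}$ and $D_A^- := \tfrac{d^-}{d\e}|_{\e=1}\mu(\e\cdot_p A)$. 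Dividing by $F'(\mu(A)) > 0$, multiplying through by $F'(1)$, and adding $\mu(A)$ to both sides produces the stated inequality via the identities $V_{p,F}^\mu(A,B) = F'(1)D_B(A)$ and $F'(1)M_{p,F}^\mu(A) = \mu(A) - F'(1)D_A^-$.

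The main technical obstacle is the limit of the derivative term $F'(\mu((1+\e)\cdot_p C_\e))\,\tfrac{d^-}{d\eta}|_{\eta = 1+\e}\mu(\eta \cdot_p C_\e)$ as $\e \to 0^+$, which requires continuity of the one-sided derivative simultaneously in the base body $C_\e \to A$ and the evaluation point $1 + \e \to 1$. Under mild regularity (e.g., absolute continuity of $\mu$ with a continuous density) the function $\eta \mapsto \mu(\eta \cdot_p A)$ is differentiable at $\eta = 1$, so its one-sided derivatives coincide and the limit is identified with $F'(\mu(A))\,D_A^-$. For equality: equality in the $F(t)$-concavity step forces $A$ and $B$ to be $L_p$-homothetic (as in the equality case of $L_p$-Brunn-Minkowski), while equality in the concavity bound for $\Lambda_A$ forces $\Lambda_A$ to be affine on a neighborhood of $\eta = 1$, and together with the $\mathcal{K}_{(o)}^n$ membership these constraints collapse to $A = B$.
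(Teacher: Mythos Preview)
Your decomposition $A +_p \e \cdot_p B = (1+\e)\cdot_p C_\e$ with $C_\e = \tfrac{1}{1+\e}\cdot_p A +_p \tfrac{\e}{1+\e}\cdot_p B$ is exactly the one the paper uses (in the equivalent form $(1-t)\cdot_p A +_p t\cdot_p B = (1-t)\cdot_p\bigl(A +_p \tfrac{t}{1-t}\cdot_p B\bigr)$), so the strategy is right. The execution, however, introduces a difficulty the paper avoids, and that difficulty is a genuine gap in your argument.

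The problem is precisely the term you flag yourself: you need
\[
\lim_{\e\to 0^+} F'(\mu((1+\e)\cdot_p C_\e))\,\frac{d^-}{d\eta}\Big|_{\eta=1+\e}\mu(\eta\cdot_p C_\e)
\;=\; F'(\mu(A))\,\frac{d^-}{d\e}\Big|_{\e=1}\mu(\e\cdot_p A).
\]
This asks for continuity of a one-sided derivative simultaneously in the evaluation point $1+\e\to 1$ \emph{and} in the underlying body $C_\e\to A$. Your appeal to ``mild regularity'' only addresses differentiability of $\eta\mapsto\mu(\eta\cdot_p A)$ at the \emph{fixed} body $A$; it says nothing about stability of that derivative as the body itself varies. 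There is no general principle guaranteeing that one-sided derivatives of $\mu$-measures of dilated bodies depend continuously on the body in the Hausdorff metric, and this is not something the hypotheses of the theorem provide.

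The paper sidesteps the whole issue. It sets
\[
f(t) \;=\; F\bigl(\mu((1-t)\cdot_p A +_p t\cdot_p B)\bigr) - \bigl[(1-t)F(\mu(A)) + tF(\mu(B))\bigr],
\]
observes that $f$ is concave with $f(0)=f(1)=0$, so $f'^+(0)\ge 0$, and then computes $f'^+(0)$ directly via the two-variable substitution $g(r,s)=\mu\bigl(r\cdot_p(A+_p s\cdot_p B)\bigr)$ at $(r,s)=(1,0)$. The point is that at $t=0$ one has $C_0=A$, so both partial contributions
\[
-\,\frac{d^-}{d\e}\Big|_{\e=1}\mu(\e\cdot_p A)
\qquad\text{and}\qquad
\frac{d^+}{d\e}\Big|_{\e=0}\mu(A+_p\e\cdot_p B)
\]
are one-sided derivatives taken at the \emph{fixed} body $A$; Lemma~\ref{t:extragoodcoro} guarantees their existence, and no limit over varying bodies is required. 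I recommend you rewrite your argument along these lines: differentiate the concave function $f(t)$ at $t=0$, rather than estimating at level $\e>0$ and then passing to the limit.

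A smaller point: your equality argument invokes the equality case of the $L_p$-Brunn--Minkowski inequality, but the hypothesis is only abstract $F(t)$-concavity on the class~$\mathcal{A}$, with no equality characterization assumed. The paper instead deduces from $f'^+(0)=0$ and concavity that $f\equiv 0$ on $[0,1]$, and then (admittedly tersely) identifies this with $A=B$; you should follow that route rather than importing an equality case that is not in the hypotheses.
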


\begin{proof} 
	The proof of the theorem follows from the ideas of the proof of the classical isoperimetric inequality (cf.  \cite[Theorem~7.2.1]{Schneider:CB2}) and \cite{LMNZ, JesusManuel}. 
Consider the function $f \colon [0,1] \to \R_+$ defined by 
\[
f(t) = F(\mu((1-t) \cdot_p A +_p t \cdot_p B))-\left[(1-t) F(\mu(A)) + t F(\mu(B))\right].
\]
Since $\mu$ is $F(t)$-concave with respect to a class of convex bodies containing the origin and closed with respect to the $L_p$-Minkowski convex combination, the functions $f$ is concave  and such that $f(0) = f(1) =0.$ Therefore, the right-derivative of $f$ at $t=0$ exists (cf. \cite[Theorem~23.1]{Rock}) and, moreover, 
\[
\frac{d^+}{dt}\biggr\rvert_{t=0}f(t) \geq 0, 
\]
with equality if and only if $f(t) = 0$ for all $t \in [0,1]$.

As
\[
\frac{d^+}{dt}\biggr\rvert_{t=0}f(t) =F'(\mu(A))\cdot \frac{d^+}{dt}\biggr\rvert_{t=0}\mu((1-t) \cdot_pA + t \cdot_p B) + F(\mu(A))-F(\mu(B)),
\]
it suffices only to compute the right derivative at $0$ of $\mu((1-t)\cdot_p A +_p t \cdot _p B)$. 

To this end, we notice that, by Lemma~\ref{t:extragoodcoro}, the one-sided derivative of $\mu(t \cdot_p A)$ and $\mu(A +_p t \cdot_p B)$ at $t = 1$ and $t =0$, respectively, exist. Set $g(r,s)=\mu(r \cdot_p(A+_p s \cdot_p B))$. Then we have 
\begin{align*}
\frac{d^+}{dt}\biggr\rvert_{t=0}\mu((1-t) \cdot_pA + t \cdot_p B) &= \frac{d^+}{dt}\biggr\rvert_{t=0} g\left(1-t, \frac{t}{1-t}\right)\\
&=-\frac{d^-}{dt}\biggr\rvert_{t=1}\mu(t \cdot_p A) + \frac{d^+}{dt}\biggr\rvert_{t=0}\mu(A+_p t \cdot_p B)\\
&= M_{p,F}^{\mu}(A) - \frac{1}{F'(1)}\mu(A) + \frac{1}{F'(1)}V_{p,F}^{\mu}(A,B).
\end{align*}
Therefore,
\[
\frac{d^+}{dt}\biggr\rvert_{t=0}f(t) = F'(\mu(A))\left[M_{p,F}^{\mu}(A)- \frac{1}{F'(1)}\mu(A) + \frac{1}{F'(1)}V_{p}^{\mu}(A,B)\right]+ F(\mu(A))-F(\mu(B)).
\]
Combing the above inequality together wit the fact that $\frac{d^+}{dt}|_{t=0}f(t) \geq 0$ yields the inequality part of the theorem. The equality follows from the characterization of the equality case of $\frac{d^+}{dt}\rvert_{t=0}f(t) = 0$ mentioned above. 
\end{proof}

We draw the following consequences of isoperimetric type inequality from Theorem~\ref{t:lpisoperimetricinequalitygeneral}.  The first follows by combining Theorem~\ref{t:lpisoperimetricinequalitygeneral} with $L_p$-BMI for measures with $(1/s)$-concave densities (Theorem~\ref{t:Lpsconcavemeasures})  by taking the class of all convex bodies $\mathcal{K}_{(o)}^n$ and the functions $F(t) = t^{\frac{p}{n+s}}$, when $0 \leq s < \infty$. And another is by taking the function $F(t) = t^{\frac{p}{n}}$ in Theorem~\ref{t:lpisoperimetricinequalitygeneral} and combing this with $L_p$-BMI for product measures with quasi-concave densities (Thoerem~\ref{t:LpPLmeasures}), we obtain the following inequalities.

\begin{corollary}
(i) Let $p \in[1,\infty)$, $t\in[0,1]$, and $s \in [0,\infty)$. Let $\mu$ be a measure given by $d\mu(x) = \phi(x) dx$, where $\phi \colon \R^n \to \R_+$ is a $\left(\frac{1}{s}\right)$-concave function on its support. Then, for any convex bodies $A,B \subset \R^n$ containing the origin such that $(1-t) \cdot_p A +_p t \cdot_p B$ has finite $\mu$-measure, one has 
\[
V_{p,t^{\frac{p}{n+s}}}^{\mu}(A,B) + \frac{p}{n+s}M_{p,t^{\frac{p}{n+s}}}^{\mu}(A) \geq \mu(B)^{\frac{p}{n+s}}\mu(A)^{1-\frac{p}{n+s}},
\]
with equality if and only if $A=B.$

(ii) Let $p > 1$, $t \in [0,1]$, and $\mu = \mu_1 \times \cdots \times \mu_n$ be a product measure on $\R^n$, where, for each $i = 1,\dots,n,$ $\mu_i$ is a measure on $\R$ having a quasi-concave density $\phi \colon \R \to \R_+$ with maximum at origin. Then, for any weakly unconditional convex bodies $A,B$ such that $(1-t) \cdot_p A +_p t \cdot_p B$ has finite $\mu$-measure, one has 
\[
V_{p, t^{\frac{p}{n}}}^{\mu}(A,B)+ \frac{p}{n}M_{p,t^{\frac{p}{n}}}^{\mu}(A) \geq\mu(B)^{\frac{p}{n}}\mu(A)^{1-\frac{p}{n}},
\]
with equality if and only if $A=B.$
\end{corollary}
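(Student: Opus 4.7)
The plan is to apply Theorem~\ref{t:lpisoperimetricinequalitygeneral} with two specific choices of the function $F$, one for each part of the corollary, and then verify that the required $F(t)$-concavity hypothesis is supplied by the previously established $L_p$-Brunn--Minkowski inequalities.

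For part (i), I would take $F(t)=t^{p/(n+s)}$, which is strictly increasing and differentiable on $\R_+$, with $F'(1)=p/(n+s)$ and $F'(\mu(A))=\tfrac{p}{n+s}\,\mu(A)^{p/(n+s)-1}$. The key observation is that Theorem~\ref{t:Lpsconcavemeasures} states
\[
\mu((1-t)\cdot_p A+_p t\cdot_p B)^{p/(n+s)}\ \geq\ (1-t)\mu(A)^{p/(n+s)}+t\mu(B)^{p/(n+s)},
\]
which is exactly the statement that $\mu$ is $F(t)$-concave with respect to the class $\mathcal{K}_{(o)}^n$ and the $L_p$-Minkowski combination. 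Since $\mathcal{K}_{(o)}^n$ is closed under $+_p$ (a direct consequence of the support function characterization in \eqref{e:Lpminkowskicomb}), all hypotheses of Theorem~\ref{t:lpisoperimetricinequalitygeneral} are met. Substituting into its conclusion, the right-hand side simplifies as
\[
\frac{F'(1)\bigl[F(\mu(B))-F(\mu(A))\bigr]}{F'(\mu(A))}+\mu(A)\ =\ \mu(B)^{p/(n+s)}\mu(A)^{1-p/(n+s)}-\mu(A)+\mu(A),
\]
which immediately yields the claimed inequality.

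For part (ii), I would repeat the argument with the choice $F(t)=t^{p/n}$, this time invoking Theorem~\ref{t:LpPLmeasures} (the $L_p$-BMI for product measures with quasi-concave densities) to supply the $F(t)$-concavity on the class of weakly unconditional convex bodies containing the origin. A short verification is required that this class is closed under the $L_p$-Minkowski combination: if $A,B$ are weakly unconditional and $z=\alpha^{1/p}(1-\lambda)^{(p-1)/p}x+\beta^{1/p}\lambda^{(p-1)/p}y$ with $x\in A,\ y\in B$, then for any $\e\in\{0,1\}^n$ we have $\e z=\alpha^{1/p}(1-\lambda)^{(p-1)/p}(\e x)+\beta^{1/p}\lambda^{(p-1)/p}(\e y)$, and since $\e x\in A$ and $\e y\in B$, the point $\e z$ remains in $(1-t)\cdot_p A+_p t\cdot_p B$. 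The algebraic simplification of the right-hand side of Theorem~\ref{t:lpisoperimetricinequalitygeneral} is then identical to part (i) with $n+s$ replaced by $n$.

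In both cases the equality characterization $A=B$ is inherited directly from the equality statement of Theorem~\ref{t:lpisoperimetricinequalitygeneral}, because equality in the isoperimetric bound forces the auxiliary function $t\mapsto F(\mu((1-t)\cdot_p A+_p t\cdot_p B))-[(1-t)F(\mu(A))+tF(\mu(B))]$ to vanish identically, i.e.\ equality in the underlying Brunn--Minkowski inequality throughout $[0,1]$. The only genuinely delicate bookkeeping point is therefore the closure of the class of weakly unconditional convex bodies under $+_p$; everything else is a straightforward substitution of the correct $F$ into the master inequality and a cancellation of the $\mu(A)$ terms.
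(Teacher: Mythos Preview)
Your proposal is correct and follows essentially the same approach as the paper: the authors state only that the corollary follows by combining Theorem~\ref{t:lpisoperimetricinequalitygeneral} with Theorem~\ref{t:Lpsconcavemeasures} (taking $F(t)=t^{p/(n+s)}$) for part~(i) and with Theorem~\ref{t:LpPLmeasures} (taking $F(t)=t^{p/n}$) for part~(ii). Your write-up simply fills in the details the paper omits---the computation of $F'(1)$ and $F'(\mu(A))$, the algebraic cancellation on the right-hand side, and the closure of the weakly unconditional class under $+_p$---all of which are correct.
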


\section{A functional counterpart of the Gardner-Zvavitch conjecture}

The study of Borell-Brascamp-Lieb type inequalities, and their connections to isoperimetric type problems lead to  the following problem, which can be seen as a functional counterpart of the Gardner-Zvavitch conjecture (cf. \cite{GZ}):

\begin{conjecture}[Gardner-Zvavitch Conjecture]
Let $\gamma_n$ denote the standard Gaussian measure on $\R^n$ having density 
\[
\phi(x) = \frac{1}{(2\pi)^{n/2}}e^{-x^2/2}.
\]
Is it true that for any convex bodies $K,L \in \mathcal{K}_{(o)}^n$, one has 
\[
\gamma_n((1-t) K+tL)^{\frac{1}{n}} \geq (1-t) \gamma_n(K)^{\frac{1}{n}} + t \gamma_n(L)^{\frac{1}{n}}?
\]
\end{conjecture}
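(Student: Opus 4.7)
The plan is to try two complementary attacks---first, to pull the inequality out of the $L_p$-BBL framework of the previous sections, and second, when that fails, to turn to a differential/spectral argument in the spirit of the semigroup proofs of the symmetric Gardner-Zvavitch inequality.

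For the first route I would set $p=1$ and try to apply Theorem~\ref{t:Lpsconcavemeasures} with $\mu=\gamma_n$. The Gaussian density $\phi(x)=(2\pi)^{-n/2}e^{-\|x\|^2/2}$ is log-concave, corresponding formally to $s=\infty$; the theorem is stated for $0\le s<\infty$, and the limiting case returns only $\gamma_n((1-t)K+tL)\ge \gamma_n(K)^{1-t}\gamma_n(L)^t$, which is the standard Pr\'ekopa-Leindler output and is strictly weaker than the $1/n$-concavity demanded by the conjecture. The same obstruction appears if one tries to apply Theorem~\ref{t:LPBBL0} directly. A relaxed version, however, is accessible: substituting $f=\phi\cdot 1_K$, $g=\phi\cdot 1_L$ into the functional counterpart~\eqref{e:lpfuncxtionalGZ} at $p=1$, using $\|\phi\cdot 1_K\|_\infty=\|\phi\cdot 1_L\|_\infty=\phi(0)$ (valid because both bodies contain the origin in their interior) and bounding $\int h\,dx\le\gamma_n((1-t)K+tL)$ from $h\le\phi\cdot 1_{(1-t)K+tL}$, yields
\[
\gamma_n((1-t)K+tL)^{1/n}\ \ge\ \frac{1}{C}\bigl[(1-t)\gamma_n(K)^{1/n}+t\gamma_n(L)^{1/n}\bigr],
\]
i.e.\ the conjecture up to the absolute constant $C>1$ furnished by the functional inequality. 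Pushing $C$ down to $1$ is precisely the open content of the problem.

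For the second route I would study the function $F(t):=\gamma_n((1-t)K+tL)^{1/n}$ and attempt to show it is concave on $[0,1]$, since concavity of $F$ immediately gives $F(t)\ge(1-t)F(0)+tF(1)$. Differentiating twice, representing the derivative of $\gamma_n(K_t)$ (with $K_t:=(1-t)K+tL$) as a boundary integral, and integrating by parts on $\partial K_t$, the condition $F''(t)\le 0$ reduces to a weighted Poincar\'e-type inequality on the boundary of schematic form
\[
\int_{\partial K_t}(h_L-h_K)^2\,e^{-\|x\|^2/2}\,dS(x)\ \ge\ \frac{n-1}{n}\,\frac{\left(\int_{\partial K_t}(h_L-h_K)\,e^{-\|x\|^2/2}\,dS(x)\right)^{\!2}}{\gamma_n(K_t)}+\text{Gaussian-curvature corrections},
\]
where $h_K,h_L$ are the support functions. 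For centrally symmetric $K_t$ one splits test functions into even and odd parts: the odd part is handled by the sharp Gaussian Poincar\'e inequality (the strategy underlying the known resolution of the symmetric case), and the even part avoids the offending translation mode.

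The main obstacle is precisely the absence of this parity decomposition when $K$ and $L$ are only assumed to contain the origin. For general $K,L\in\mathcal{K}_{(o)}^n$ the kernel of the linearized operator contains a genuine translation mode, and I do not see a way---using only tools from the present paper---to rule out that the corresponding eigenvalue forces $F''>0$ somewhere. Thus, while the two steps above reduce the conjecture to a concrete spectral problem on $\partial K_t$ and already yield the conjecture with a constant loss via~\eqref{e:lpfuncxtionalGZ}, removing the loss in the non-symmetric setting appears to require an essentially new idea beyond the $L_p$-Borell-Brascamp-Lieb framework developed here, which is why the statement remains posed as a conjecture rather than proved as a theorem.
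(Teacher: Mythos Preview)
The statement is a \emph{conjecture}, and the paper does not prove it. In fact, immediately after stating it the paper records that the inequality was \emph{disproved} for general (non-symmetric) $K,L\in\mathcal{K}_{(o)}^n$ by Nayar and Tkocz~\cite{PT}, while it was established in the origin-symmetric case by Eskenazis and Moschidis~\cite{EM}. So there is no proof in the paper to compare your attempt against, and your closing sentence---that removing the constant loss ``appears to require an essentially new idea''---is off the mark: no idea will remove the loss, because the sharp inequality is simply false without symmetry.

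Your first route, obtaining the inequality up to an absolute constant by specializing the functional result~\eqref{e:lpfuncxtionalGZ} at $p=1$ with $f=\phi\cdot 1_K$, $g=\phi\cdot 1_L$, is precisely the content of the paper's Corollary~\ref{t:LpGZmeasures} (taken at $p=1$), and this is exactly as far as the paper goes. Your chain of inequalities is correct: $\|f\|_\infty=\|g\|_\infty=\|h\|_\infty=\phi(0)$ since $0\in K\cap L$, and $h\le \phi\cdot 1_{(1-t)K+tL}$ gives $\gamma_n((1-t)K+tL)\ge\int h$, which then feeds into~\eqref{e:lpfuncxtionalGZ}. Your second, spectral/semigroup route is entirely external to the paper and, in view of~\cite{PT}, cannot succeed in the generality stated; the translation mode you single out as the obstacle is exactly what drives the known counterexamples.
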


The above problem was recently shown to be true in \cite{EM} when $K$ and $L$ are taken to be origin symmetric. However, it was shown in \cite{PT} that, in general, the above conjecture fails without this symmetry assumption.  It is curious to know whether or not for general convex bodies containing the origin, the Gardner-Zvavitch conjecture holds true up to some universal constant. This can be seen by the following Corollary~\ref{t:LpGZmeasures}.

Motivated by the advances on the Gardner-Zvavitch conjecture, we ask the following functional counterpart of this conjecture, and answer it in some special cases.

\begin{conjecture}[Functional $L_p$-Gardner-Zvavitch conjecture] Let $p \in [1,\infty).$ Let $f,g \colon \R^n \to \R_+$ be centered integrable functions, and $\mu$ be measure on $\R^n$, and $t \in (0,1)$. Assume that $h \colon \R^n \to \R_+$ is a measurable function such that 
\[
h((1-t)^{\frac{1}{p}}(1-\lambda)^{\frac{p-1}{p}}x + t^{\frac{1}{p}}\lambda^{\frac{p-1}{p}}y) \geq f(x)^{(1-t)^{\frac{1}{p}}(1-\lambda)^{\frac{p-1}{p}}}g(y)^{t^{\frac{1}{p}}\lambda^{\frac{p-1}{p}}}
\]
holds for all $x, y \in \R^n$ and for all $0<\lambda<1$.  Does there exist an absolute constant $C\geq 1$ such that following inequality hold:
\[
\int_{\R^n} h(x) d\mu(x) \geq \frac{1}{C^p}\left[(1-t)\left(\int_{\R^n} f(x) d\mu(x)\right)^{p/n} + t \left(\int_{\R^n}g(x)d\mu(x) \right)^{p/n} \right]^{n/p}?
\]

\end{conjecture}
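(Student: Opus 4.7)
The plan is to prove the conjecture in the special case $\mu=dx$ and $s=\infty$ with $f, g$ log-concave, which is the content of (\ref{e:lpfuncxtionalGZ}); the fully general statement (arbitrary $\mu$, arbitrary admissible triple $(f,g,h)$) looks genuinely out of reach with the techniques of the paper.

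By homogeneity I would first normalize so that $\|f\|_{\infty} = \|g\|_{\infty} = 1$. Applying the Hölder bound $(1-t)^{1/p}(1-\lambda)^{(p-1)/p} + t^{1/p}\lambda^{(p-1)/p} \le 1$ already used after (\ref{e:LpextensionofMinkowski}), one checks that the minimal $h = (1-t) \cdot_{p,\infty} f \oplus_{p,\infty} t \cdot_{p,\infty} g$ satisfies $\|h\|_{\infty}\le 1$, so layer cake becomes $\int h\,dx = \int_0^1 |\{h\ge r\}|_n\,dr$. The central structural step is a level-set inclusion: for every $r\in(0,1]$,
\[
\{h\ge r\}\ \supset\ (1-t)\cdot_p\{f\ge r\} +_p t\cdot_p\{g\ge r\}.
\]
This is exactly the $s=\infty$ analogue of the computation in (\ref{e:LpextensionofMinkowski}): if $f(x)\ge r$ and $g(y)\ge r$, then the defining supremum in (\ref{e:newsupcolvolution}) at $z=(1-t)^{1/p}(1-\lambda)^{(p-1)/p}x + t^{1/p}\lambda^{(p-1)/p}y$ evaluates to $\min\{f(x),g(y)\}\ge r$ as $s\to\infty$.

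Feeding this inclusion into the LYZ $L_p$-BMI (\ref{e:LYZBM}) yields, for each $r\in(0,1]$,
\[
|\{h\ge r\}|_n \ge \bigl[(1-t)F(r)^{p/n} + tG(r)^{p/n}\bigr]^{n/p},
\]
where $F(r):=|\{f\ge r\}|_n$ and $G(r):=|\{g\ge r\}|_n$. Integrating in $r$ reduces the desired inequality to the purely one-dimensional statement
\[
\int_0^1\bigl[(1-t)F^{p/n}+tG^{p/n}\bigr]^{n/p}\,dr \ \ge\ \frac{1}{C^{p}}\Bigl[(1-t)\bigl(\textstyle\int_0^1 F\bigr)^{p/n}+t\bigl(\int_0^1 G\bigr)^{p/n}\Bigr]^{n/p}.
\]
Because $f, g$ are log-concave with $\|f\|_\infty=\|g\|_\infty=1$, both $F$ and $G$ are log-concave decreasing on $(0,1]$ with $F^{1/n}$, $G^{1/n}$ concave on each log-level interval (Brunn--Minkowski on the convex super-level sets). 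This is exactly the regime where a Berwald-type reverse Minkowski integral inequality applies and produces a dimension-free constant.

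The main obstacle is precisely that last step: obtaining an \emph{absolute} constant $C$, independent of $n$, requires the sharp form of the reverse Minkowski/Berwald estimate for decreasing log-concave distribution functions, and one needs to verify it is compatible with the non-linear mixing $[(1-t)F^{p/n}+tG^{p/n}]^{n/p}$ rather than a simple convex combination. I would expect the right constant to emerge by comparing $F$ to its unique log-concave rearrangement with the same $L^1$ and $L^{n/p}$ norms (so the extremizer is a one-dimensional truncated exponential), at which point the integral comparison becomes an explicit Beta-function computation. The corollary for measures stated in the abstract would then follow immediately by taking $f = \phi\cdot 1_A$, $g=\phi\cdot 1_B$ for the density $\phi$ of $\mu$, exactly as in the proof of Theorem~\ref{t:Lpsconcavemeasures}, paying the factor $1/C^p$.
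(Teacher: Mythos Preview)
Your level-set strategy is correct up to the integral step, but it diverges from the paper and leaves a real gap at the end.

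First a small slip: as $s\to\infty$ the bracket $[(1-t)^{1/p}(1-\lambda)^{(p-1)/p}f^{1/s}+t^{1/p}\lambda^{(p-1)/p}g^{1/s}]^s$ does \emph{not} tend to $\min\{f(x),g(y)\}$; since the coefficients sum to at most $1$, the correct $s=\infty$ expression is the weighted geometric form $f(x)^{\alpha}g(y)^{\beta}$ used in the conjecture. Fortunately your level-set inclusion survives: if $\|f\|_\infty=\|g\|_\infty=1$ and $f(x),g(y)\ge r\le 1$, then $f(x)^{\alpha}g(y)^{\beta}\ge r^{\alpha+\beta}\ge r$ because $\alpha+\beta\le 1$. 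So the chain $\{h\ge r\}\supset (1-t)\cdot_p\{f\ge r\}+_p t\cdot_p\{g\ge r\}$ and the pointwise $L_p$-BMI bound $|\{h\ge r\}|_n\ge M_{p/n}^t(F(r),G(r))$ are fine.

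The genuine gap is the last step. You reduce everything to
\[
\int_0^1 M_{p/n}^t\bigl(F(r),G(r)\bigr)\,dr \ \ge\ \frac{1}{C^{p}}\,M_{p/n}^t\!\left(\int_0^1 F,\ \int_0^1 G\right)
\]
with $C$ \emph{independent of $n$}, and you invoke an unspecified ``Berwald-type reverse Minkowski'' inequality. No such off-the-shelf statement is cited or proved, and the required uniformity in $n$ is exactly the delicate point: naive convexity/Jensen arguments run the wrong way (they give upper, not lower, bounds), and the log-concavity of $F,G$ alone does not obviously force a dimension-free comparison of $\int M_{p/n}^t(F,G)$ with $M_{p/n}^t(\int F,\int G)$. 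As written the proof is incomplete here.

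The paper avoids this integral inequality entirely. Instead of layering over all $r$, it works with a \emph{single} geometric object, Ball's body $K_n(f)$, which satisfies $|K_n(f)|_n=\|f\|_\infty^{-1}\int f$. The Klartag--Milman lemma (Lemma~\ref{t:KM}) pins $K_n(f)$ between the single level set $L_n(f)=\{f\ge e^{-n}\|f\|_\infty\}$ and $C\cdot K_n(f)$ with an absolute $C$. One then checks the level-set inclusion only at $r=e^{-n}$, namely $(1-t)\cdot_p L_n(f)+_p t\cdot_p L_n(g)\subset L_n(h)$, and deduces $C\cdot K_n(h)\supset (1-t)\cdot_p K_n(f)+_p t\cdot_p K_n(g)$. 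A single application of the $L_p$-BMI \eqref{e:LYZBM} to these Ball bodies gives \eqref{e:lpfuncxtionalGZ} directly. In effect, the Klartag--Milman lemma is doing for free the ``reverse Minkowski over levels'' that your argument still owes; it packages the needed log-concave concentration into one clean set inclusion and hands you the absolute constant. Your approach could perhaps be completed, but you would essentially be reproving that lemma in integrated form.
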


We answer the above conjecture in the following particular case when $p=1$ and $C=1$ for product measures. 

\begin{theorem}\label{t:functionalgzsquares}
Let $\mu = \mu_1 \times \cdots \times \mu_n$ be a product measure, where $\mu_i$ is a measure on $\R$ having quasi-concave function $\phi$ such that $\phi(0) = \|\phi\|_{\infty}$ for all $i\in\{1,\cdots,n\}$, and let $t \in (0,1)$. Suppose that $f,g \colon \R^n \to \R_+$ are measurable functions with each having maximum at the origin, and satisfying $\|f\|_{\infty} = \|g\|_{\infty}$, and whose super-level sets can be written as products of Borel subsets of $\R$ containing the origin.  Then, for any measurable function $h \colon \R^n \to \R_+$ satisfying 
\begin{equation}\label{mincondition}
h((1-t)x + t y) \geq \min\{f(x),g(y)\}
\end{equation}
holds for every $x,y \in \R^n$ and any $t \in [0,1]$, and with maximum at the origin, one has
\[
\int_{\R^n} h(x) d\mu(x) \geq \left[(1-t)\left(\int_{\R^n} f(x) d\mu(x)\right)^{1/n} + t \left(\int_{\R^n}g(x)d\mu(x) \right)^{1/n} \right]^n.
\]
\end{theorem}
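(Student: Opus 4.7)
The plan is to reduce the integral inequality to a super-level-set inclusion, apply a product-measure Minkowski estimate coordinatewise, and integrate against the layer-cake formula. The key first observation is that the min-type hypothesis \eqref{mincondition} immediately yields the super-level-set inclusion
\[
\{h \geq r\} \supset (1-t)\{f \geq r\} + t\{g \geq r\}
\]
for every $0 \leq r \leq \|f\|_{\infty} = \|g\|_{\infty}$: if $f(x) \geq r$ and $g(y) \geq r$, then $h((1-t)x + ty) \geq \min\{f(x),g(y)\} \geq r$. Since $h$ attains its maximum at the origin, evaluating \eqref{mincondition} at $x = y = 0$ gives $\|h\|_{\infty} = h(0) \geq \|f\|_{\infty}$, so the relevant range of $r$ is captured.

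Next, I would exploit the assumed product structure of the super-level sets. Writing $\{f \geq r\} = \prod_{i=1}^n A_i^r$ and $\{g \geq r\} = \prod_{i=1}^n B_i^r$ with $0 \in A_i^r \cap B_i^r$, the Minkowski combination splits as
\[
(1-t)\{f \geq r\} + t\{g \geq r\} = \prod_{i=1}^n \bigl((1-t)A_i^r + tB_i^r\bigr),
\]
so by Fubini, $\mu$ of this set equals $\prod_i \mu_i((1-t)A_i^r + tB_i^r)$. Invoking Lemma~\ref{t:lpBmonedimensional} with $p=1$ on each one-dimensional factor yields the bound $\mu_i((1-t)A_i^r + tB_i^r) \geq (1-t)\mu_i(A_i^r) + t\mu_i(B_i^r)$, and the elementary product Minkowski inequality $\prod_{i=1}^n (a_i + b_i) \geq \bigl(\prod_i a_i^{1/n} + \prod_i b_i^{1/n}\bigr)^n$ then produces
\[
\mu(\{h \geq r\}) \geq \Bigl[(1-t)\mu(\{f \geq r\})^{1/n} + t\mu(\{g \geq r\})^{1/n}\Bigr]^n.
\]

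To finish, I would combine the layer-cake identity $\int \varphi \, d\mu = \int_0^{\infty} \mu(\{\varphi \geq r\}) \, dr$ with Minkowski's integral inequality in $L^n(dr)$, namely
\[
\left(\int \bigl[(1-t)u(r)^{1/n} + t v(r)^{1/n}\bigr]^n dr\right)^{1/n} \geq (1-t)\left(\int u\right)^{1/n} + t\left(\int v\right)^{1/n},
\]
applied with $u(r)=\mu(\{f \geq r\})$ and $v(r)=\mu(\{g \geq r\})$; raising to the $n$-th power delivers the conclusion. The main obstacle I anticipate is bookkeeping rather than mathematical: verifying cleanly that the product-structure hypothesis combines with the coordinate origin assumption to make the one-dimensional Lemma~\ref{t:lpBmonedimensional} directly applicable on each factor, since that lemma requires the quasi-concave density to attain its maximum at the origin and the sets to contain $0$. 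Everything else is a tight chain of classical product and integral Minkowski inequalities, each of which is essentially sharp.
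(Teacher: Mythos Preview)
Your argument tracks the paper's proof through the super-level-set inclusion, the coordinatewise splitting of the Minkowski sum, and the one-dimensional estimate from Lemma~\ref{t:lpBmonedimensional}. The genuine gap is the final step. The inequality you invoke,
\[
\left(\int \bigl[(1-t)u(r)^{1/n} + t v(r)^{1/n}\bigr]^n \, dr\right)^{1/n} \;\geq\; (1-t)\left(\int u\right)^{1/n} + t\left(\int v\right)^{1/n},
\]
is the \emph{reverse} of Minkowski's inequality in $L^n$. For $n\geq 2$ the triangle inequality gives $\|a+b\|_n \leq \|a\|_n + \|b\|_n$, so with $a(r)=(1-t)u(r)^{1/n}$ and $b(r)=tv(r)^{1/n}$ you obtain $\leq$, not $\geq$. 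A direct check with $n=2$, $t=1/2$, $u=1_{[0,1]}$, $v=1_{[0,1/4]}$ yields left side $\sqrt{7/16}\approx 0.661$ and right side $3/4$; both $u,v$ are nonincreasing, so no monotonicity hypothesis rescues the step. The chain therefore breaks exactly where you anticipated only bookkeeping.

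The paper handles this juncture by reversing the order of the two remaining operations. After arriving at
\[
\int_0^1 \prod_{i=1}^n\bigl[(1-t)\mu_i(I_{i,r}) + t\mu_i(J_{i,r})\bigr]\,dr,
\]
it first interchanges the $dr$-integral with the finite product (invoking an ``independence of the product'' of the coordinate factors), and only afterwards applies the discrete product-Minkowski inequality $\bigl(\prod_i(a_i+b_i)\bigr)^{1/n} \geq \prod_i a_i^{1/n} + \prod_i b_i^{1/n}$ to the already-integrated quantities $\int_0^1 \mu_i(I_{i,r})\,dr$ and $\int_0^1 \mu_i(J_{i,r})\,dr$. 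That ordering is what keeps the inequality pointing the right way; your route applies the discrete product-Minkowski pointwise in $r$ first, which then forces the integral Minkowski in the direction it does not hold.
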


\begin{proof} By replacing $f$ with $f/\|f\|_{\infty}$ and $g$ with $g/\|g\|_{\infty}$, we  assume that $\|f\|_{\infty}=\|g\|_{\infty} = 1$ without loss of generality. 
Let $H_r, F_r, G_r$ denote the super level sets of $h,f,g$, respectively, that is.
\[
H_r:=\{x \colon h(x) \geq r\}, \quad F_r:= \{x \colon f(x) \geq r\}, \quad G_r:=\{x \colon g(x) \geq r\}.
\]
By the conditions that $f,g,h$ satisfy, one sees that, for all $0 \leq r < 1$,  
\[
H_r \supset (1-t) F_r + t G_r.
\]
Now, using the fact that each $F_r, G_r$ are coordinate boxes, one can write
\[
F_r = \prod_{i=1}^n I_{i,r} \quad G_r = \prod_{i=1}^n J_{i,r}
\]
for some Borel sets $I_{i,r},J_{i,r} \subset \R$ all containing the origin; consequently, 
\[
(1-t) F_r + t G_r = \prod_{i=1}^n [(1-t)I_{i,r} + t J_{i,r}]. 
\]
Using the fact that $\mu$ is a product measure together with Fubini's theorem, we have that 
\begin{align*}
\int_{\R^n} h(x) d\mu(x) &\geq \int_0^1 \mu(H_r) dr\\
&\geq \int_0^1 \mu((1-t)F_r + tG_r)dr\\
&= \int_0^1 \prod_{i=1}^n \mu_i((1-t)I_{i,r} + t J_{i,r})dr.
\end{align*}
Using Lemma~\ref{t:lpBmonedimensional}, we have that, for each $i=1,\dots,n$, 
\[
\mu_i((1-t)I_{i,r} + t J_{i,r}) \geq (1-t) \mu_i(I_{i,r}) + t \mu_i(J_{i,r}),
\]
and so 
\begin{align*}
\int_{\R^n} h(x) d\mu(x) &\geq  \int_0^1 \left(\prod_{i=1}^n [(1-t) \mu_i(I_{i,r}) + t \mu_i(J_{i,r})] \right) dr\\
&= \prod_{i=1}^n \left( (1-t) \int_0^1 \mu_i(I_{i,r}) dr+ t \int_0^1 \mu_i(J_{i,r}) dr \right),
\end{align*}
where, in the last step, we have used independence of the product. Finally, applying Minkowski's inequality, one has:
\begin{align*}
\left[ \int_{\R^n} h(x) d\mu(x) \right]^{1/n} &\geq \left[\prod_{i=1}^n \left( (1-t) \int_0^1 \mu_i(I_{i,r}) dr+ t \int_0^1 \mu_i(J_{i,r})dr \right) \right]^{1/n}\\
&\geq (1-t) \left(\prod_{i=1}^n \int_0^1 \mu_i(I_{i,r})dr \right)^{1/n} + t\left(\prod_{i=1}^n \int_0^1 \mu_i(J_{i,r})dr \right)^{1/n}\\
&=(1-t) \left( \int_{\R^n} f(x) d\mu(x) \right)^{1/n} + t \left( \int_{\R^n} g(x) d\mu(x) \right)^{1/n},
\end{align*}
as desired.
\end{proof}

In particular, due to the definition of the $L_{p,s}$--supremal convolution for $p\geq1$, we  obtain the following Corollaries by replacing (\ref{mincondition}) by (\ref{e:LpBBLassumption}) for $s=1$. 

\begin{corollary} Let $p \geq 1$. 
Let $\mu = \mu_1 \times \cdots \times \mu_n$ be a product measure, where $\mu_i$ is a measure on $\R$ having a  quasi-concave density $\phi$ such that $\phi(0) = \|\phi\|_{\infty}$, and let $t \in (0,1)$. Suppose that $f,g \colon \R^n \to \R_+$ are measurable functions with each having maximum at the origin, and such that $\|f\|_{\infty} = \|g\|_{\infty}$, and whose super-level sets can be written as a product of Borel subsets of $\R$ each containing the origin.  Then, for any measurable function $h \colon \R^n \to \R_+$ satisfying 
\[
h\left((1-t)^{1/p}(1-\lambda)^{(p-1)/p} x + t^{1/p}\lambda^{(p-1)/p}y\right) \geq (1-t)^{1/p}(1-\lambda)^{(p-1)/p}f(x) + t^{1/p}\lambda^{(p-1)/p}g(y)
\]
holds for every $x \in \text{supp}(f)$, $y \in \text{supp}(g)$, and every $0 \leq \lambda \leq 1$, and such that $h$ attains its maximum at the origin, one has 
\[
\int_{\R^n} h(x) d\mu(x) \geq \left[(1-t)\left(\int_{\R^n} f(x) d\mu(x)\right)^{1/n} + t \left(\int_{\R^n}g(x)d\mu(x) \right)^{1/n} \right]^n.
\]
\end{corollary}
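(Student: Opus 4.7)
The plan is to obtain this corollary as an almost immediate consequence of Theorem~\ref{t:functionalgzsquares}, by showing that the $L_{p,s}$--type hypothesis (with $s=1$) imposed here is strictly stronger than the min-hypothesis \eqref{mincondition} required there. So my strategy has two moves: a specialization of the parameter $\lambda$ to collapse the $L_p$-condition onto the classical one, and then a direct invocation of the previous theorem.

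First I would specialize the free parameter $\lambda \in [0,1]$ appearing in the hypothesis to the distinguished choice $\lambda = t$. Because $1/p + (p-1)/p = 1$, the coefficients simplify nicely:
\[
(1-t)^{1/p}(1-t)^{(p-1)/p} = 1-t, \qquad t^{1/p}\, t^{(p-1)/p} = t.
\]
With this, the assumed inequality on $h$ reduces to
\[
h((1-t)x + t y) \ \geq\ (1-t) f(x) + t\, g(y) \qquad \text{for every } x \in \text{supp}(f),\ y \in \text{supp}(g).
\]
Since the right-hand side is a convex combination, it is bounded below by $\min\{f(x),g(y)\}$, so the triple $f,g,h$ indeed satisfies condition \eqref{mincondition} of Theorem~\ref{t:functionalgzsquares}.

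Next I would verify that every other hypothesis needed by Theorem~\ref{t:functionalgzsquares} has already been assumed here: the product structure of $\mu$ with each marginal $\mu_i$ quasi-concave and maximized at the origin; $f,g$ measurable and maximized at the origin with $\|f\|_{\infty} = \|g\|_{\infty}$; the super-level sets of $f$ and $g$ being products of Borel subsets of $\R$ each containing $0$; and the extra assumption that $h$ attains its maximum at the origin. All of these transfer verbatim. Thus Theorem~\ref{t:functionalgzsquares} applies to the very same triple, and its conclusion is exactly the desired inequality
\[
\int_{\R^n} h\, d\mu \ \geq\ \left[(1-t)\left(\int_{\R^n} f\, d\mu\right)^{1/n} + t \left(\int_{\R^n} g\, d\mu \right)^{1/n} \right]^n.
\]

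There is no real obstacle: the only substantive point is the specialization $\lambda = t$, which converts the $L_{p,s=1}$ supremal-convolution bound into the classical Minkowski combination bound used by Theorem~\ref{t:functionalgzsquares}. In this sense the corollary is a clean reduction rather than an independent argument, and it illustrates that the supremum-over-$\lambda$ formulation of the $L_p$ hypothesis automatically recovers the $p=1$ case.
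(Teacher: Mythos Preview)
Your proposal is correct and follows essentially the same approach as the paper. The paper does not give a detailed argument but simply remarks that the corollary follows from Theorem~\ref{t:functionalgzsquares} by observing that the $L_{p,s}$--assumption with $s=1$ is a strengthening of condition~\eqref{mincondition}; your specialization $\lambda=t$ (together with the elementary bound $(1-t)f(x)+tg(y)\ge\min\{f(x),g(y)\}$ and the trivial extension to $x\notin\text{supp}(f)$ or $y\notin\text{supp}(g)$) makes this reduction explicit.
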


Combining the above corollary with Theorem~\ref{t:functionalMink1st}, that is letting $F(t)=t^{1/n}$ with the $L_{p,s}$--supremal convolution,
we obtain the inequalities as follows. 

\begin{corollary}Let $p\in [1,\infty)$ and $s \in [0,\infty]$. Let $\mu = \mu_1 \times \cdots \times \mu_n$ be a product measure, where $\mu_i$ is a measure on $\R$ having a  quasi-concave density $\phi$ such that $\phi(0) = \|\phi\|_{\infty}$, and let $t \in (0,1)$. Suppose that $f,g \colon \R^n \to \R_+$ are measurable functions with each having maximum at the origin, and such that $\|f\|_{\infty} = \|g\|_{\infty}$, and whose super-level sets can be written as products of Borel subsets of $\R$ containing the origin. Then one has 
\[
S_{\mu,p,s}(f,g) \geq S_{\mu,p,s}(f,f) + \frac{\left(\int_{\R^n} g(x) d\mu(x) \right)^{1/n} - \left(\int_{\R^n} f(x) d\mu(x) \right)^{1/n}}{\frac{1}{n}\left(\int_{\R^n} f(x) d\mu(x) \right)^{\frac{1}{n}-1}}.
\]
If $\int_{\R^n} f(x)d\mu(x) = \int_{\R^n} g(x)d\mu(x)> 0$, we have the following isoperimetric type inequality
\[
S_{\mu,p,s}(f,g) \geq S_{\mu,p,s}(f,f).
\]
\end{corollary}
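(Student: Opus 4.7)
The plan is to deduce this corollary essentially as a direct application of Theorem~\ref{t:functionalMink1st} ($L_p$-MFI for measures) once the hypotheses of that theorem are verified for the class $\mathcal{A}$ of measurable functions $u \colon \R^n \to \R_+$ with maximum at the origin, with $\|u\|_\infty$ normalised to a common value, and with super-level sets of product form $\prod_{i=1}^n I_i$ for Borel sets $I_i \subset \R$ containing $0$. The choice of ``concavity function'' will be $F(t) = t^{1/n}$, so that $F^{-1}(a) = a^n$ and $F'(t) = \tfrac{1}{n} t^{\frac{1}{n}-1}$.

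First, I would observe that the preceding corollary in the excerpt, together with the definition of $\times_{p,s}$ and $\oplus_{p,s}$, says precisely that $\mu$ is $F(t)$-concave on $\mathcal{A}$ in the sense of Definition~\ref{definitionoffconcavity} with $F(t) = t^{1/n}$. Indeed, if $f,g \in \mathcal{A}$ then $h := [(1-t)\times_{p,s} f] \oplus_{p,s} [t \times_{p,s} g]$ is the minimal function satisfying the hypothesis of that preceding corollary (it is the $L_{p,s}$--supremal convolution with weights $1-t,t$), so
\[
\int_{\R^n} \bigl[((1-t)\times_{p,s} f) \oplus_{p,s} (t\times_{p,s} g)\bigr] d\mu
\;\geq\; \Bigl[(1-t)\Bigl(\int f d\mu\Bigr)^{1/n} + t\Bigl(\int g d\mu\Bigr)^{1/n}\Bigr]^n,
\]
which is exactly
\[
\int_{\R^n} \bigl[((1-t)\times_{p,s} f) \oplus_{p,s} (t\times_{p,s} g)\bigr] d\mu \;\geq\; F^{-1}\Bigl((1-t) F\Bigl(\int f d\mu\Bigr) + t F\Bigl(\int g d\mu\Bigr)\Bigr).
\]
A small bookkeeping step is needed to verify that $f$ and $g$ belonging to $\mathcal{A}$ force the hypotheses of the preceding corollary (origin-maximum, equal $L^\infty$ norms, product super-level sets) on the pair $(f,g)$ used there.

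Second, I would apply Theorem~\ref{t:functionalMink1st} with this $F$. Plugging $F(u)=u^{1/n}$ and $F'(u)=\tfrac{1}{n} u^{\frac{1}{n}-1}$ into the conclusion \eqref{e:functionalMink1stconclusion} yields
\[
S_{\mu,p,s}(f,g) \;\geq\; S_{\mu,p,s}(f,f) + \frac{\bigl(\int g d\mu\bigr)^{1/n} - \bigl(\int f d\mu\bigr)^{1/n}}{\tfrac{1}{n} \bigl(\int f d\mu\bigr)^{\frac{1}{n}-1}},
\]
which is the stated inequality. The isoperimetric corollary then follows at once by setting $\int f d\mu = \int g d\mu > 0$, which makes the numerator vanish.

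The only genuine care point — and the step I expect to require the most attention — is to confirm that the class $\mathcal{A}$ is actually stable enough for Theorem~\ref{t:functionalMink1st} to apply, in particular that the functions $\frac{1}{1-\varepsilon}\times_{p,s} f$ and $\varepsilon \times_{p,s} g$ appearing inside the proof of that theorem remain in $\mathcal{A}$ (so that the preceding corollary can be invoked at each $\varepsilon$). This reduces to two observations: scaling $u \mapsto \alpha \times_{p,s} u$ merely rescales the argument and multiplies by $\alpha^{s/p}$, so it preserves the ``maximum at the origin'' property and the product structure of super-level sets; and the equal-$L^\infty$ normalisation can be handled by the same homogenisation trick used in the proof of Theorem~\ref{t:LpPrekopaLeindler} (dividing $f,g$ by their common sup-norm is a harmless rescaling that factors out of the final inequality). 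With those remarks in place the deduction is purely mechanical substitution into Theorem~\ref{t:functionalMink1st}.
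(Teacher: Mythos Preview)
Your proposal is correct and follows essentially the same route as the paper: the paper's own justification is the single sentence ``Combining the above corollary with Theorem~\ref{t:functionalMink1st}, that is letting $F(t)=t^{1/n}$ with the $L_{p,s}$--supremal convolution,'' which is exactly the two-step plan you describe. Your additional remarks about checking stability of the class $\mathcal{A}$ under $\times_{p,s}$ and about the $L^\infty$ normalisation are more careful than what the paper spells out, but they do not alter the argument.
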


Finally, for $s=\infty$, we show a special $L_p$-BMI of the normalized $L_p$-sum of log-concave functions (Theorem \ref{t:lpfunctionalGZ}) as follows.
It concerns the following class of $\log$-concave functions on $\R^n$:
\[
\mathcal{L}^n = \left\{f \colon \R^n\to \R_+ \colon f(0) = \|f\|_{\infty}, 0 < \int f < \infty, f\ \text{is} \ \log\text{-concave} \right\},
\]
with a notion of convex bodies associated to $\log$-concave functions belonging to the class $\mathcal{L}^n$ as follows.  Let $q >0$ and $f \in \mathcal{L}^n$. Following \cite{Ball,Bobkov} we consider the following critical sets 
\[
K_q(f) =\left\{x \in \R^n \colon \left(\frac{1}{\|f\|_{\infty}}\int_0^{\infty} q f(rx) r^{q-1} dr \right)^{-\frac{1}{q}} \leq 1 \right\}.
\]
It was shown in \cite{Ball} that $K_m(f)$ is a convex body containing the origin for any $q >0$ and any $f \in \mathcal{L}^n$, and whose radial function $\rho_{K_q(f)} \colon \mathbb{S}^{n-1} \to \R_+$ is given by
\begin{equation}\label{e:radialfunctionBallBody}
\rho_{K_q(f)}(u) = \left(\frac{1}{\|f\|_{\infty}}\int_0^{\infty} q f(ru) r^{q-1} dr \right)^{\frac{1}{q}}.
\end{equation}
Moreover, when $q=n$ we see that: 
\begin{equation}\label{e:Ballvolume}
|K_n(f)|_n = \frac{1}{\|f\|_{\infty}}\int_{\R^n}f(x) dx.    
\end{equation}
Indeed, integrating in polar coordinates, we see that 
\begin{align*}
|K_n(f)|_n &=\int_{K_n(f)}dx = n |B_2^n|_n\int_{\mathbb{S}^{n-1}} \int_0^{\rho_{K_n(f)}}r^{n-1}dr du\\
&=|B_2^n|_n \int_{\mathbb{S}^{n-1}}\rho_{K_n(f)}(u)^n du\\
&= |B_2^n|_n \int_{\mathbb{S}^{n-1}} \frac{n}{\|f\|_{\infty}}\int_{0}^{\infty}f(ru)r^{n-1}dr du\\
&= \frac{1}{\|f\|_{\infty}} \int_{\R^n} f(x) dx,
\end{align*}
as claimed.

For $q >0$ and $f \in \mathcal{L}^n$, we consider the level-set 
\[
L_n(f) = \{x \in \R^n \colon f(x) \geq e^{-n}\|f\|_{\infty}\}. 
\]
We make use of the following lemma, originally due to Klartag and Milman in \cite{KlartagMilman}.

\begin{lemma}\label{t:KM}
Let $f \in \mathcal{L}^n$. The the following set inclusion holds 
\[
K_n(f) \subset L_n(f) \subset C \cdot K_n(f),
\]
where $C>1$ is some universal constant. 
\end{lemma}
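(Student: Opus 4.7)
The plan is to reduce both set inclusions to one-dimensional inequalities along each ray from the origin, and then to handle the two directions separately. For $u \in \mathbb{S}^{n-1}$, set $g(r) := f(ru)/\|f\|_\infty$. Since $f$ attains its maximum at the origin and is log-concave, $g$ is log-concave and non-increasing on $[0,\infty)$ with $g(0) = 1$. Write $\psi := -\log g$, so $\psi$ is convex with $\psi(0) = 0$. By the formula \eqref{e:radialfunctionBallBody}, $\rho_{K_n(f)}(u)^n = n\int_0^\infty g(r)\,r^{n-1}\,dr$, while the radial function of $L_n(f)$ in direction $u$ is the unique $R > 0$ with $\psi(R) = n$ (equivalently $g(R) = e^{-n}$). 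Writing $\rho := \rho_{K_n(f)}(u)$, the two set inclusions reduce to the one-dimensional claims $\rho \le R$ and $R \le C\rho$.

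For the second inclusion $L_n(f) \subset C\cdot K_n(f)$: convexity of $\psi$ together with $\psi(0) = 0$ and $\psi(R) = n$ forces $\psi(r) \le nr/R$ on $[0,R]$, and hence $g(r) \ge e^{-nr/R}$ there. Integrating,
\[
\rho^n \ \ge\ n\int_0^R e^{-nr/R}\,r^{n-1}\,dr \ =\ \frac{R^n}{n^{n-1}}\,\gamma(n,n).
\]
For $n$ large, $\gamma(n,n) \ge \tfrac{1}{2}\Gamma(n)$, and Stirling's formula gives $\Gamma(n)/n^{n-1} \sim \sqrt{2\pi n}\,e^{-n}$; combining these and extracting $n$-th roots yields $R/\rho \le e\,(c\sqrt{n}\,)^{-1/n}$ for a universal $c>0$, which is bounded uniformly in $n$ (the right-hand side converges to $e$). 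Small-dimensional cases can be handled by a direct computation of $\gamma(n,n)/n^{n-1}$, giving the universal constant $C$.

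For the first inclusion $K_n(f) \subset L_n(f)$: the rescaling $r = Rs$ reduces the claim to showing
\[
n\int_0^\infty G(s)\,s^{n-1}\,ds\ \le\ 1 \quad\text{for every log-concave $G$ on $[0,\infty)$ with $G(0)=1$ and $G(1)=e^{-n}$.}
\]
Introducing the inverse function $\phi(t) := \sup\{s \ge 0 : -\log G(s) \le t\}$, which is concave and non-decreasing with $\phi(0) = 0$ and $\phi(n) = 1$, a layer-cake identity rewrites the integral as $\int_0^\infty \phi(t)^n e^{-t}\,dt$. Thus the task becomes the sharp inequality $\int_0^\infty \phi(t)^n e^{-t}\,dt \le \phi(n)^n$ over the convex class of admissible $\phi$. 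I would prove this by a variational argument: approximating $\phi$ by piecewise-linear concave functions and passing to two-piece extremals $\phi_\alpha(t) := \min(\alpha t,1)$ for $\alpha \ge 1/n$, explicit computation yields
\[
\int_0^\infty \phi_\alpha(t)^n e^{-t}\,dt \ =\ \alpha^n\,\gamma(n+1,1/\alpha) + e^{-1/\alpha},
\]
which is increasing in $\alpha$ (its derivative is $n\alpha^{n-1}\gamma(n+1,1/\alpha)>0$) and converges to $1$ as $\alpha \to \infty$, the degenerate step-function limit $\phi \equiv 1$ on $(0,\infty)$. The main obstacle is the tightness of this bound: the naive concavity estimates $\phi \le 1$ on $[0,n]$ and $\phi(t) \le t/n$ on $[n,\infty)$ yield only $(1 - e^{-n}) + \Gamma(n+1,n)/n^n$, which already exceeds $1$ for small $n$ (for instance the bound is $\approx 1 + 1/e$ when $n=1$). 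Achieving the sharp constant $1$ requires exploiting the global trade-off inside the concavity constraint that prevents $\phi$ from being simultaneously close to $1$ on most of $[0,n]$ and close to the diagonal on $[n,\infty)$; this is precisely the content captured by the extremal reduction above.
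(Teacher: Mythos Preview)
The paper does not prove this lemma; it is quoted from Klartag--Milman \cite{KlartagMilman}. So there is no in-paper argument to compare against, and your attempt stands on its own. Your reduction to rays and your treatment of the second inclusion $L_n(f)\subset C\cdot K_n(f)$ are correct and standard: the convexity bound $\psi(r)\le nr/R$ on $[0,R]$, the incomplete Gamma estimate, and Stirling combine to give a uniform constant $C$.

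The gap is in the first inclusion $K_n(f)\subset L_n(f)$. Your reformulation as $\int_0^\infty\phi(t)^n e^{-t}\,dt\le 1$ for concave non-decreasing $\phi$ with $\phi(0)=0$ and $\phi(n)=1$ is correct, and maximizing a convex functional over a convex class by passing to extreme points is the right strategy. But the family you test, $\phi_\alpha(t)=\min(\alpha t,1)$ with $\alpha\ge 1/n$, is not the full set of extreme points: these are all bounded by $1$, while admissible $\phi$ need not be (e.g.\ $\phi(t)=t/n$). The correct representation is $\phi(t)=\int_0^\infty\min(t,r)\,d\mu(r)$, valid for any concave $\phi$ with $\phi(0)=0$; writing $d\nu(r)=\min(n,r)\,d\mu(r)$ (a probability measure, since $\phi(n)=1$) and applying Jensen to $z\mapsto z^n$ reduces the claim to checking
\[
\int_0^\infty\left(\frac{\min(t,r)}{\min(n,r)}\right)^{n} e^{-t}\,dt\ \le\ 1\qquad\text{for every }r>0.
\]
For $r\le n$ this is exactly your $\phi_\alpha$ with $\alpha=1/r$, and your monotonicity computation applies. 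For $r>n$ the integral equals $n^{-n}\bigl(\gamma(n+1,r)+r^n e^{-r}\bigr)$, which is increasing in $r$ with limit $n!/n^n\le 1$; in particular, for $n=1$ the linear extremal $\phi(t)=t$ actually attains the value $1$. Your ``passing to two-piece extremals'' step thus misses half of the boundary, and the concluding paragraph about a ``global trade-off'' is a description of what needs to be proved rather than a proof.
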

Now we are prepared to prove the $L_p$-BMI of the normalized $L_p$-sum of log-concave functions based on the above lemma of set inclusion relation between the level sets $K_n(f)$ and $L_n(f)$.
\begin{theorem}\label{t:lpfunctionalGZ} 
	Let $p \geq 1, t \in [0,1]$, and $f,g \in \mathcal{L}^n$.  Then
	\begin{equation}\label{e:lpfuncxtionalGZ}
	\begin{split}
	\left(\frac{1}{\|(1-t) \cdot_{p,\infty}f \oplus_{p,\infty} t \cdot_{p,\infty} g\|_{\infty}}\ \int_{\R^n}(1-t) \cdot_{p,\infty}f \oplus_{p,\infty} t \cdot_{p,\infty} g dx\right)^{\frac{p}{n}}\geq\\
	\frac{1}{C^p}\cdot \left[(1-t) \left( \frac{1}{\|f\|_{\infty}}\int_{\R^n} f(x)dx\right)^{\frac{p}{n}} + t \left(\frac{1}{\|g\|_{\infty}}\int_{\R^n} g(x)dx\right)^{\frac{p}{n}}\right]
	\end{split}
	\end{equation}
	where $C>1$ is an absolute constant.
\end{theorem}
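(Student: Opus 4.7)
The plan is to translate the inequality into a Brunn--Minkowski-type statement for the Ball bodies $K_n(f)$, $K_n(g)$, and $K_n(h)$ associated with $f$, $g$, and $h:=(1-t)\cdot_{p,\infty}f\oplus_{p,\infty}t\cdot_{p,\infty}g$, and then to invoke the Klartag--Milman Lemma~\ref{t:KM} together with the $L_p$-Brunn--Minkowski inequality \eqref{e:LYZBM} for convex bodies. First I would normalize by dividing $f$ and $g$ by their suprema; since $f,g\in\mathcal{L}^n$ this entails $f(0)=g(0)=1$, and evaluating the defining supremum of $h$ at $z=0$ yields $\|h\|_\infty=1$. Next I would identify $h$ with the classical Asplund product: by H\"older's inequality, $(1-t)^{1/p}(1-\lambda)^{(p-1)/p}+t^{1/p}\lambda^{(p-1)/p}\le 1$ with equality only at $\lambda=t$, so in the $s\to\infty$ limit defining the supremal convolution only that slice contributes and we obtain $h(z)=\sup_{z=(1-t)x+ty}f(x)^{1-t}g(y)^{t}$.

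Under this normalization the volume formula \eqref{e:Ballvolume} rewrites the target inequality as an $L_p$-Brunn--Minkowski statement for Ball bodies,
\[
|K_n(h)|^{p/n}\ \geq\ \frac{1}{C^p}\Bigl[(1-t)\,|K_n(f)|^{p/n} + t\,|K_n(g)|^{p/n}\Bigr].
\]
The first concrete step is the level-set inclusion $L_n(h)\supset (1-t)L_n(f)+tL_n(g)$, which is immediate from the Asplund form: if $x\in L_n(f)$ and $y\in L_n(g)$ then $f(x),g(y)\ge e^{-n}$, so $h((1-t)x+ty)\ge f(x)^{1-t}g(y)^{t}\ge e^{-n}$, placing $(1-t)x+ty$ in $L_n(h)$. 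Chaining Lemma~\ref{t:KM} on both ends of this inclusion then gives $C\cdot K_n(h)\supset L_n(h)\supset (1-t)K_n(f)+tK_n(g)$, whence $K_n(h)\supset\frac{1}{C}\bigl[(1-t)K_n(f)+tK_n(g)\bigr]$.

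The main obstacle is to upgrade this classical Minkowski inclusion to an $L_p$-Minkowski one, since combining the classical Brunn--Minkowski inequality with Jensen's inequality yields only $|K_n(h)|^{p/n}\ge C^{-p}[(1-t)V_f^{1/n}+tV_g^{1/n}]^{p}$, which is strictly weaker than the desired right-hand side $(1-t)V_f^{p/n}+tV_g^{p/n}$. I would resolve this by producing the sharper inclusion $C'\cdot K_n(h)\supset (1-t)\cdot_p K_n(f)+_p t\cdot_p K_n(g)$ for a universal constant $C'$, analyzing each $\lambda$-slice of the $L_p$-Minkowski sum separately: for $x\in K_n(f)\subset L_n(f)$ and $y\in K_n(g)\subset L_n(g)$ the pointwise lower bounds $f(x),g(y)\ge e^{-n}$ supplied by Klartag--Milman, combined with the star-shape of the Ball bodies and a suitable decomposition $z=(1-t)X+tY$ with $X\in[0,x]$ and $Y\in[0,y]$, show that the candidate point $(1-t)^{1/p}(1-\lambda)^{(p-1)/p}x+t^{1/p}\lambda^{(p-1)/p}y$ lies in a controlled dilate of $L_n(h)$. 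Transferring back to $K_n(h)$ by one more application of Lemma~\ref{t:KM} and then invoking \eqref{e:LYZBM} produces the theorem with $C$ an absolute constant depending only on the Klartag--Milman comparison constant.
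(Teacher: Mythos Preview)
Your overall strategy --- pass to the Ball bodies $K_n(\cdot)$ via \eqref{e:Ballvolume}, use the Klartag--Milman comparison $K_n\subset L_n\subset C\,K_n$, establish a set inclusion at the level of $L_n$, and then invoke the $L_p$-BMI --- is exactly the paper's plan. The gap is in your identification of $h$.

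Your claim that the $s\to\infty$ limit kills every $\lambda\neq t$ slice and collapses $h_{p,t,\infty}$ to the classical Asplund product $\sup_{z=(1-t)x+ty}f(x)^{1-t}g(y)^t$ is not the convention used here. In this paper the $s=\infty$ case is the formal ``log'' interpretation
\[
h(z)=\sup_{0\le\lambda\le1}\ \sup_{z=a_\lambda x+b_\lambda y}\ f(x)^{a_\lambda}g(y)^{b_\lambda},
\qquad a_\lambda=(1-t)^{1/p}(1-\lambda)^{(p-1)/p},\ b_\lambda=t^{1/p}\lambda^{(p-1)/p},
\]
with the full $\lambda$-family retained (compare \eqref{e:lpprekopaleindlerassumption} and the proof of Corollary~\ref{t:LpGZmeasures}, where this yields $h=\phi\cdot 1_{(1-t)\cdot_pK+_pt\cdot_pL}$, not $\phi\cdot 1_{(1-t)K+tL}$). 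That extra $\lambda$-freedom is precisely what gives the $L_p$-level-set inclusion directly: for $x\in L_n(f)$, $y\in L_n(g)$ one has $\bar f(x)^{a_\lambda}\bar g(y)^{b_\lambda}\ge e^{-n(a_\lambda+b_\lambda)}\ge e^{-n}$ by H\"older, so $(1-t)\cdot_pL_n(f)+_pt\cdot_pL_n(g)\subset L_n(h)$ with no further work, and then Lemma~\ref{t:KM} plus \eqref{e:LYZBM} finishes.

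By contrast, once you have reduced $h$ to the classical Asplund product, your ``upgrade'' step cannot succeed with an absolute constant. Writing $z=a_\lambda x+b_\lambda y=(1-t)X+tY$ with $X\in[0,x]$, $Y\in[0,y]$ forces $X=\tfrac{a_\lambda}{1-t}x$ and $Y=\tfrac{b_\lambda}{t}y$; the scale factors $a_\lambda/(1-t)=(1-t)^{1/p-1}(1-\lambda)^{(p-1)/p}$ and $b_\lambda/t=t^{1/p-1}\lambda^{(p-1)/p}$ are simultaneously $\le1$ only at $\lambda=t$, and over $\lambda\in[0,1]$ their maximum equals $\max\{(1-t)^{1/p-1},\,t^{1/p-1}\}$, which blows up as $t\to0$ or $t\to1$ for any $p>1$. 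So the ``controlled dilate'' you need is not controlled uniformly in $t$, and the argument does not produce an absolute $C$. The fix is not to repair the rescaling, but to keep the full $\lambda$-supremum in the definition of $h$, after which the $L_p$-inclusion at the $L_n$-level is immediate.
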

\begin{proof} Let $f,g \in \mathcal{L}^n$ and set 
\[
h = (1-t) \cdot_{p,\infty}f \oplus_{p,\infty} t \cdot_{p,\infty}g \in \mathcal{L}^n,
\]
and consider the bodies $K_n(f),K_n(g)$, and $K_n(h)$ associated to $f,g$, and $h$, respectively. We notice that, if we can show the inclusion 
\begin{equation}\label{e:awesomeinclusion}
C \cdot K_n(h) \supset (1-t) \cdot_p K_n(f) +_p t \cdot K_n(g),    
\end{equation}
then, by combining the identity \eqref{e:Ballvolume} together with the $L_p$-BMI \eqref{e:LPBBLinequality2}, it would imply that 
\begin{align*}
\left( \frac{1}{\|h\|_{\infty}} \int_{\R^n} h(x) dx \right)^{\frac{p}{n}} &= |K_n(h)|^{\frac{p}{n}}\\
&\geq \frac{1}{C^p} \left( (1-t)|K_n(f)|^{\frac{p}{n}} + t |K_n(g)|^{\frac{p}{n}} \right)\\
&=\frac{1}{C^p}\left[(1-t) \left(\frac{1}{\|f\|_{\infty}}\int_{\R^n} f(x) dx\right)^{\frac{p}{n}} + t \left(\frac{1}{\|g\|_{\infty}} \int_{\R^n} g(x) dx\right)^{\frac{p}{n}}\right].
\end{align*}
Therefore, we need only to establish the inclusion \eqref{e:awesomeinclusion}. 

Using Lemma~\ref{t:KM}, to establish the desired inclusion, it suffices to show that 
\[
(1-t) \cdot_p L_n(f) +_p t \cdot_p L_n(g) \subset L_n(h). 
\]
Set $\bar{f} = f /\|f\|_{\infty}$ and $\bar{g}=g / \|g\|_{\infty}$. Let $z \in (1-t) \cdot_p L_n(f) +_p t \cdot_p L_n(g)$. Then there exist $x \in \text{supp}(f)$, $y\in \text{supp}(g)$, and $0 \leq \lambda \leq 1$ such that 
\[
z = (1-t)^{\frac{1}{p}}(1-\lambda)^{\frac{p-1}{p}}x + t^{\frac{1}{p}}\lambda^{\frac{p-1}{p}}y, \quad \bar{f}(x) \geq e^{-n}, \quad  \bar{g}(y)  \geq e^{-n}. 
\]
Using these conditions, we see that 
\[
\bar{f}(x)^{(1-t)^{\frac{1}{p}}(1-\lambda)^{\frac{p-1}{p}}}\bar{g}(y)^{t^{\frac{1}{p}}\lambda^{\frac{p-1}{p}}} \geq  e^{-n\left[(1-t)^{\frac{1}{p}}(1-\lambda)^{\frac{p-1}{p}}+t^{\frac{1}{p}}\lambda^{\frac{p-1}{p}} \right]} \geq e^{-n},
\]
where, in the last step, we have used H\"older's inequality to conclude that 
\[
[(1-t)^{\frac{1}{p}}(1-\lambda)^{\frac{p-1}{p}}+t^{\frac{1}{p}}\lambda^{\frac{p-1}{p}} \leq 1.
\]
Therefore, we see that $z \in L_n((1-t) \cdot_{p,\infty} \bar{f} \oplus_{p,\infty} t \cdot_{p,\infty} \bar{g})$.

Finally, to complete the proof, we observe that,
\begin{align*}
 h(z) &= \sup_{0\leq \lambda \leq 1}\left[ \sup_{(1-t)^{\frac{1}{p}}(1-\lambda)^{\frac{p-1}{p}}x + t^{\frac{1}{p}}\lambda^{\frac{p-1}{p}}y}  f(x)^{(1-t)^{\frac{1}{p}}(1-\lambda)^{\frac{p-1}{p}}}g(y)^{t^{\frac{1}{p}}\lambda^{\frac{p-1}{p}}} \right]\\
 &\geq e^{-n} \sup_{0 \leq \lambda \leq 1}\left(\|f\|_{\infty}^{(1-t)^{\frac{1}{p}}(1-\lambda)^{\frac{p-1}{p}}} \cdot \|g\|_{\infty}^{t^{\frac{1}{p}}\lambda^{\frac{p-1}{p}}}\right) = e^{-n}\|h\|_{\infty}.
\end{align*}
Hence, the inclusion \eqref{e:awesomeinclusion} holds, and the proof is complete. 

\end{proof}
In particular, we have the special inequality for measures and convex bodies instead of measures for functions as follows; that is, the extension of the $L_p$-BMI for product measures with quasi-concave densities to a log concave measure with a constant bound $\frac{1}{C^p}.$
\begin{corollary}\label{t:LpGZmeasures}
	Let $p \geq 1$, $t \in [0,1]$, and $\mu$ be a $\log$-concave measure on $\R^n$.  Then there exists a universal constant $C>1$ such that, for any $K,L \in \mathcal{K}_{(o)}^n$, one has 
	\begin{equation}\label{e:LpGZmeasures}
	\mu((1-t) \cdot_p K +_p t \cdot_p L)^{\frac{p}{n}} \geq \frac{1}{C^p}\left[(1-t) \mu(K)^{\frac{p}{n}} + t \mu(L)^{\frac{p}{n}} \right]. 
	\end{equation}
\end{corollary}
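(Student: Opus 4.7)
The plan is to transfer Theorem~\ref{t:lpfunctionalGZ} from functions to measures by applying it to log-concave cutoffs of the density of $\mu$. Let $\phi$ denote the density of $\mu$; normalize so that $\|\phi\|_\infty = 1$ and place the origin at a maximum point of $\phi$, so that $\phi(0) = 1$. Set $f := 1_K \phi$ and $g := 1_L \phi$. Since $K, L$ are convex bodies containing the origin and $\phi$ is log-concave, both $f$ and $g$ are log-concave with $f(0) = g(0) = 1 = \|f\|_\infty = \|g\|_\infty$, so that $f, g \in \mathcal{L}^n$; moreover $\int f\,dx = \mu(K)$ and $\int g\,dx = \mu(L)$.

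Write $h := (1-t) \cdot_{p,\infty} f \oplus_{p,\infty} t \cdot_{p,\infty} g$. The heart of the reduction is two pointwise observations. First, $\|h\|_\infty = 1$: the upper bound is immediate since $h$ is a supremum of geometric averages of values of $f, g \le 1$, and the lower bound follows by evaluating at $z = 0$ with the trivial decomposition $x = y = 0$, which gives $h(0) \ge f(0)^\alpha g(0)^\beta = 1$ for any admissible exponents $\alpha, \beta$. Second, the pointwise domination $h(z) \le \phi(z)$ holds on $(1-t) \cdot_p K +_p t \cdot_p L$ (and $h \equiv 0$ elsewhere): if $z = \alpha x + \beta y$ with $x \in K$, $y \in L$, $\alpha = (1-t)^{1/p}(1-\lambda)^{(p-1)/p}$, and $\beta = t^{1/p}\lambda^{(p-1)/p}$, then $\alpha + \beta \le 1$ by H\"older's inequality, so writing $z = \alpha x + \beta y + (1 - \alpha - \beta) \cdot 0$ and invoking the log-concavity of $\phi$ together with $\phi(0) = 1$ yields
\[
\phi(z) \ge \phi(x)^{\alpha}\phi(y)^{\beta}\phi(0)^{1-\alpha-\beta} = f(x)^{\alpha} g(y)^{\beta}.
\]
Taking the supremum over all such decompositions gives $h(z) \le \phi(z)$, and integrating yields $\int h \, dx \le \mu((1-t) \cdot_p K +_p t \cdot_p L)$.

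With $\|f\|_\infty = \|g\|_\infty = \|h\|_\infty = 1$, Theorem~\ref{t:lpfunctionalGZ} applied to $f$ and $g$ then reads
\[
\Bigl(\int h\, dx\Bigr)^{p/n} \ge \frac{1}{C^p}\bigl[(1-t) \mu(K)^{p/n} + t \mu(L)^{p/n}\bigr],
\]
and combining this with the integral bound $\int h\, dx \le \mu((1-t) \cdot_p K +_p t \cdot_p L)$ delivers \eqref{e:LpGZmeasures}.

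The anticipated obstacle is the placement of the origin relative to $\phi$. Membership in $\mathcal{L}^n$ forces $\phi(0) = \|\phi\|_\infty$, which is the natural setting of centered or symmetric log-concave measures but need not hold for an arbitrary log-concave $\mu$. Because the $L_p$-Minkowski combination is not translation-invariant, a bare translation does not reduce the general case to this one at no cost, and some additional argument (or a restriction to densities peaking at the origin) is required. The universal constant $C$ appearing in \eqref{e:LpGZmeasures} is inherited directly from the Klartag-Milman inclusion in Lemma~\ref{t:KM} that drives Theorem~\ref{t:lpfunctionalGZ}.
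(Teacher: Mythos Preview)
Your approach is essentially the paper's: set $f=\phi\cdot 1_K$, $g=\phi\cdot 1_L$, and feed these into Theorem~\ref{t:lpfunctionalGZ}. The paper asserts the stronger identity $h=(1-t)\cdot_{p,\infty}f\oplus_{p,\infty}t\cdot_{p,\infty}g=\phi\cdot 1_{(1-t)\cdot_p K+_p t\cdot_p L}$ and reads off the conclusion directly; you instead prove only the inequality $h\le \phi$ on the $L_p$-combination (via log-concavity and $\phi(0)=1$) together with $\|h\|_\infty=1$, which is all that is actually needed. In fact your caution is warranted: the paper's claimed equality $h=\phi\cdot 1_M$ can fail (e.g.\ $n=1$, $\phi(x)=e^{-x^2}$, $p=2$, $t=1/2$, $K=[-1,1]$, $L=[-2,2]$ at the extreme point $z=\sqrt{5/2}$ of $M$), so your inequality-based route is the correct one.

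You also correctly flag the hypothesis $\phi(0)=\|\phi\|_\infty$ required for $f,g\in\mathcal{L}^n$; the paper silently assumes this. As you note, this cannot be arranged by a free translation because the $L_p$-combination is origin-dependent, so the result as stated really lives in the regime of log-concave densities peaking at the origin (which is the natural setting of the Gardner--Zvavitch problem anyway).
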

\begin{proof} In fact, let $\phi$ denote the density of the measure $\mu$ consider the functions $f = \phi \cdot 1_K$ and $g = \phi \cdot 1_L$.  Then
	\[
	h = (1-t) \cdot_{p,\infty} f \oplus_{p,\infty} t \cdot_{p,\infty} g = \phi \cdot 1_{(1-t) \cdot_p K +_p t \cdot_p L}.
	\]
	Then by applying inequality \eqref{e:lpfuncxtionalGZ} to the triple of functions $f,g,h$, we obtain inequality \eqref{e:LpGZmeasures}, as desired. 
\end{proof}

\begin{remark} \label{t:GZremark}
	The establishment of the above inequality (\ref{e:LpGZmeasures}) up to some universal constant turns out to be simple for measure $\mu$ with radial decay for the Minkowski convex combination, i.e.,  $\mu(t K) \geq t^n \mu(K)$ for convex sets $K$ containing the origin and $t \in [0,1]$.  Example of measures are those with $\left(\frac{1}{s}\right)$-concave densities that assume their maximum at the origin.  Therefore, for any convex bodies $K,L \in \mathcal{K}^n_{(o)}$, $p \geq 1$, and $t \in [0,1]$, and measure $\mu$ on $\R^n$ that had radial decay, we see that 
	\begin{align*}
	\mu((1-t) \cdot_p K +_p t \cdot_p L)^{\frac{p}{n}} &\geq \max\left\{\mu((1-t) \cdot_p K)^{\frac{p}{n}}, \mu(t \cdot_p L)^{\frac{p}{n}} \right\}\\
	&\geq \frac{1}{2}\left(\mu((1-t) \cdot_p K)^{\frac{p}{n}} +\mu(t \cdot_p L)^{\frac{p}{n}} \right)\\
	&\geq \frac{1}{2}\left((1-t)\mu(K)^{\frac{p}{n}} +t \mu(L)^{\frac{p}{n}} \right),
	\end{align*}
where we have used the fact that $(1-t) \cdot_p K, t \cdot_p L \subset (1-t) \cdot_p K +_p t \cdot_p L$ for $p\geq 1$. 
\end{remark}
\section*{Acknowledgements} The authors would like to thank Professors Artem Zvavitch, Andrea Colesanti, and Jes\'us Yepes Nicholas for their reading of the manuscript and for providing valuable suggestions and discussion on the content of the paper.  The authors would like to thank BIRS in Banff, Canada and the Mathematics Department at the University of Alberta, where part of this manuscript was written, for their warm hospitality and inviting environment. Finally, the authors would like to thank Professor Galyna Livshyts for holding the workshop discussion session in Georgia Institute of Technology where the project first ignited.

\end{document}